
\documentclass[11pt,a4paper]{amsart}

\usepackage{latexsym}
\usepackage{amsmath}
\usepackage{amsthm}
\usepackage{amssymb}
\usepackage{vmargin}
\usepackage{amscd}
\usepackage{stmaryrd}
\usepackage{euscript}
\usepackage{mathrsfs}
\usepackage{amscd}
\usepackage[all]{xy}
\usepackage{xr}

\DeclareMathAlphabet{\mathpzc}{OT1}{pzc}{m}{it}



\externaldocument[ddt1-]{ddt1}
\externaldocument[monad-]{monad}
\externaldocument[htpy-]{htpy}
\externaldocument[paper2-]{paper2}

\newtheorem{theorem}{Theorem}[section]
\newtheorem{proposition}[theorem]{Proposition}
\newtheorem{corollary}[theorem]{Corollary}

\newtheorem{lemma}[theorem]{Lemma}
\newtheorem*{theorem*}{Theorem}
\newtheorem*{proposition*}{Proposition}
\newtheorem*{corollary*}{Corollary}
\newtheorem*{lemma*}{Lemma}
\newtheorem*{conjecture*}{Conjecture}

\theoremstyle{definition}
\newtheorem{definition}[theorem]{Definition}

\newtheorem*{definition*}{Definition}

\theoremstyle{remark}
\newtheorem{example}[theorem]{Example}
\newtheorem{examples}[theorem]{Examples}
\newtheorem{remark}[theorem]{Remark}
\newtheorem{remarks}[theorem]{Remarks}

\newtheorem*{example*}{Example}
\newtheorem*{examples*}{Examples}
\newtheorem*{remark*}{Remark}
\newtheorem*{remarks*}{Remarks}
\newtheorem*{exercise*}{Exercise}


\newcommand\ra{\rightarrow}

\newcommand\id{\mathrm{id}}

\newcommand\ten{\otimes}
\newcommand\vareps{\varepsilon}
\newcommand\eps{\epsilon}

\newcommand\CC{\mathrm{C}}

\renewcommand\H{\mathrm{H}}

\newcommand\N{\mathbb{N}}
\newcommand\Z{\mathbb{Z}}

\newcommand\bD{\mathbb{D}}
\newcommand\bE{\mathbb{E}}

\newcommand\bI{\mathbb{I}}
\newcommand\bJ{\mathbb{J}}

\newcommand\bS{\mathbb{S}}

\newcommand\C{\mathcal{C}}

\newcommand\cA{\mathcal{A}}
\newcommand\cB{\mathcal{B}}

\newcommand\cD{\mathcal{D}}
\newcommand\cE{\mathcal{E}}

\newcommand\cS{\mathcal{S}}

\newcommand\cU{\mathcal{U}}

\renewcommand\O{\mathscr{O}}

\newcommand\Def{\mathfrak{Def}}

\newcommand\fY{\mathfrak{Y}}
\newcommand\fZ{\mathfrak{Z}}

\renewcommand\L{\Lambda}

\newcommand\m{\mathfrak{m}}

\newcommand\g{\mathfrak{g}}

\renewcommand\hom{\mathscr{H}\!\mathit{om}}
\newcommand\cHom{\mathcal{H}\!\mathit{om}}

\newcommand\Ho{\mathrm{Ho}}
\newcommand\Ring{\mathrm{Ring}}

\newcommand\Hom{\mathrm{Hom}}

\newcommand\Ext{\mathrm{Ext}}
\newcommand\EExt{\mathbb{E}\mathrm{xt}}

\newcommand\cone{\mathrm{cone}}

\newcommand\Ob{\mathrm{Ob}\,}

\newcommand\Gp{\mathrm{Gp}}

\newcommand\Spf{\mathrm{Spf}\,}

\newcommand\Set{\mathrm{Set}}

\newcommand\Cat{\mathrm{Cat}}

\newcommand\Mor{\mathrm{Mor}\,}

\newcommand\Sp{\mathrm{Sp}}

\newcommand\Grpd{\mathrm{Grpd}}

\newcommand\into{\hookrightarrow}
\newcommand\onto{\twoheadrightarrow}
\newcommand\abuts{\implies}
\newcommand\xra{\xrightarrow}

\newcommand\bt{\bullet}
\newcommand\by{\times}

\newcommand\mc{\mathrm{MC}}
\newcommand\mmc{\underline{\mathrm{MC}}}

\newcommand\Gg{\mathrm{Gg}}

\newcommand\ddef{\mathrm{Def}}
\newcommand\ddel{\mathrm{Del}}

\newcommand\Tot{\mathrm{Tot}\,}

\newcommand\toph{\top_{\mathrm{h}}}
\newcommand\topv{\top_{\mathrm{v}}}
\newcommand\both{\bot_{\mathrm{h}}}
\newcommand\botv{\bot_{\mathrm{v}}}

\newcommand\ev{\mathrm{ev}}

\newcommand\pd{\partial}

\newcommand\half{\frac{1}{2}}

\newcommand\LA{\mathrm{LA}}

\newcommand\op{\mathrm{opp}}

\newcommand\oR{\mathbf{R}}
\newcommand\oL{\mathbf{L}}

\newcommand\uleft\underleftarrow
\newcommand\uline\underline

\sloppy
\begin{document}

\begin{abstract}
We introduce a new approach to constructing derived deformation groupoids, by considering them as parameter spaces for strong homotopy bialgebras. This allows them to be constructed for all classical deformation problems, such as deformations of an arbitrary scheme,  in any characteristic.
\end{abstract}

\title{Derived deformations of schemes}
\author{J.P.Pridham}
\thanks{This work was supported by Trinity College, Cambridge; and by the Engineering and Physical Sciences Research Council [grant number  EP/F043570/1.}
\maketitle


\section*{Introduction}

In \cite{paper2}, the theory of simplicial deformation complexes (SDCs) was expounded  as a means of governing deformation problems, giving an alternative to the theory of differential graded Lie algebras (DGLAs). The main advantages of SDCs over DGLAs are that they can be constructed canonically (and thus for a wider range of problems), and are valid in all characteristics. 

In \cite{Man2}, Manetti showed that given a DGLA, or even  an SHLA, governing a deformation problem, it is possible  to define an extended deformation functor. The approach in this paper can almost be regarded as opposite to this --- we try, for any deformation problem, to define an extended deformation functor with a geometric interpretation, meaning that the functor still parametrises geometric objects. We then see how this functor can be recovered from   the SDC governing  the problem.

Since almost all examples of SDCs come from monadic and comonadic adjunctions, in Section \ref{extsdc} we  look at how to extend deformation groupoids in these scenarios. For a monad $\top$, the solution is to look at the strong homotopy $\top$-algebras, as defined by Lada in \cite{loop}. The idea is that the monadic axioms are only satisfied up to homotopy, with the homotopies satisfying further conditions up to homotopy, and so on. This approach allows us to define a quasi-smooth extended deformation functor associated to any SDC, with the same cohomology.

Using the constructions of \S\S \ref{diagram} and \ref{constrain}, we  describe extended deformations of morphisms and diagrams (giving new results  even for the problems in \cite{paper2}). This  defines cohomology of a morphism in any such category, giving a variant of Van Osdol's bicohomology (\cite{osdol}). One consequence is that the space describing extended deformations of the identity morphism on an object $D$ is just the loop space of the space of extended deformations of $D$. 

The structure of the paper is as follows.  Sections \ref{one} and \ref{algsn} are introductory, summarising results from \cite{ddt1} and properties of monads and comonads, respectively. Section \ref{constructsdc}  reprises material from \cite{paper2} on SDCs, and includes  new results constructing SDCs associated to  diagrams in \S\S \ref{diagram} and \ref{constrain}. The key motivating examples of deformations of a scheme are described in Examples \ref{keyegs} and \ref{overscheme}. 

Section \ref{extsdc} then gives the construction of the derived deformation functor (Definition \ref{ddefdef}), together with a simplified description of derived deformations of a morphism (Proposition \ref{loopmor}), and the characterisation of derived deformations of an identity morphism as a loop space (Proposition \ref{loopmor2}).

In \cite{paper2}, it was shown that SDCs are equivalent to $\N_0$-graded  DGLAs in characteristic $0$, in such a way  that the associated deformation groupoids are equivalent. In Appendix \ref{ddtconsistentsdc}, we show how that the associated extended deformation functors are also equivalent.

\tableofcontents

\section{Derived deformation functors}\label{one}

With the exception of \S \ref{quotsp},   the definitions and results in this section can all be found in \cite{ddt1}. 
Fix a complete local Noetherian ring $\L$, with maximal ideal $\mu$ and residue field $k$. 

\subsection{Simplicial Artinian rings}

\begin{definition}
Let $\C_{\L}$ denote the category of local Artinian $\L$-algebras with residue field $k$.
 We define $s\C_{\L}$ to be the category of Artinian simplicial  local $\L$-algebras, with residue field $k$. 
\end{definition}

\begin{definition}\label{N^s}
Given a simplicial complex $V_{\bt}$, recall that the normalised chain complex $N^s(V)_{\bt}$ is given by  $N^s(V)_n:=\bigcap_{i>0}\ker (\pd_i: V_n \to V_{n-1})$, with differential $\pd_0$. The simplicial Dold-Kan correspondence says that $N^s$ gives an equivalence of categories between simplicial complexes and non-negatively graded chain complexes in any abelian category. Where no ambiguity results, we will denote $N^s$ by $N$.
\end{definition}

\begin{lemma}\label{cotdef}
A simplicial complex $A_{\bt}$  of local $\L$-algebras with residue field $k$ and maximal ideal $\m(A)_{\bt}$   is Artinian if and only if:
\begin{enumerate}
\item the normalisation $N(\cot A)$ of the cotangent space $\cot A:=\m(A)/(\m(A)^2+\mu \m(A))$  is finite-dimensional (i.e. concentrated in finitely many degrees, and finite-dimensional in each degree). 
\item For some $n>0$, $\m(A)^n=0$.
\end{enumerate} 
\end{lemma}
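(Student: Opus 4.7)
The plan is to analyse $A_{\bullet}$ via the $\m$-adic filtration
$A \supset \m(A) \supset \m(A)^2 \supset \cdots$
whose graded pieces $\m(A)^i/\m(A)^{i+1}$ are simplicial $k$-vector spaces and admit levelwise surjections from the iterated tensor products $(\cot A)^{\ten i}$ over $k$. Combined with the Dold--Kan correspondence from Definition \ref{N^s}, this filtration should translate between finiteness of $A$ itself (as an Artinian simplicial object) and finiteness of the two pieces of data in (1) and (2).

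For the ``only if'' direction I would proceed from the definition of Artinian in $s\C_{\L}$ used in \cite{ddt1}: $A_{\bullet}$ is obtained from $\L$ by finitely many small extensions $0 \to M_{\bullet} \to A'_{\bullet} \to A''_{\bullet} \to 0$ in which $M_{\bullet}$ is annihilated by $\m$ and has $N(M)$ finite-dimensional and bounded. Condition (2) then follows by induction: each small extension has square-zero kernel, so $\m^n$ drops by at least one power at each stage. Condition (1) follows because $\cot A$ is only altered (functorially) by the cokernel of $M \to \cot A'$, contributing a finite-dimensional bounded chunk to $N(\cot A)$ at each step.

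For the ``if'' direction I would assume (1) and (2) and build $A_{\bullet}$ as an iterated small extension of $\L$. Write $n$ for the integer in (2), so that $A_{\bullet} = A/\m^n$ has the finite filtration with successive quotients $\m^i/\m^{i+1}$ for $0 \le i < n$. Each quotient is a simplicial $k$-module and is a levelwise surjective image of $(\cot A)^{\ten i}$. A short Eilenberg--Zilber computation shows that if $N(\cot A)$ is bounded and finite-dimensional in each degree then so is $N((\cot A)^{\ten i})$, and hence so is $N(\m^i/\m^{i+1})$. This exhibits $A_{\bullet}$ as an iterated square-zero extension of $\L$ by simplicial modules of the type allowed in the definition, making it Artinian. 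As an immediate byproduct each $A_n$ is a finite-dimensional local $\L$-algebra, since only finitely many of the $N(\m^i/\m^{i+1})_m$ are nonzero and the Dold--Kan formula $V_n = \bigoplus_{[n]\onto[m]} N(V)_m$ uses only finitely many summands.

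The main obstacle is the Eilenberg--Zilber step in the ``if'' direction, i.e.\ controlling the normalisation of the levelwise tensor powers $(\cot A)^{\ten i}$ in terms of $N(\cot A)$, and then passing to the quotients $\m^i/\m^{i+1}$ compatibly with the simplicial structure. A secondary subtlety is that $\m(A)_{\bullet}$ is a simplicial ideal rather than a subring, so one must be careful that $\m^i$ (defined levelwise) is preserved by degeneracies and faces and that the filtration is genuinely one of simplicial $\L$-modules; this is routine but needs to be recorded before invoking the graded pieces.
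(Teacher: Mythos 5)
The paper itself offers no argument here --- it simply cites \cite{ddt1} --- so the comparison is with the standard proof. The substantive gap in your proposal is that you never pin down what ``Artinian'' means for a simplicial local ring, and the definition you substitute (``obtained from $\L$ by finitely many small extensions whose kernels are $\m$-torsion with bounded, finite-dimensional normalisation'') is not the definition but a \emph{consequence} of the lemma: it is essentially Lemma \ref{small} applied to the surjection $A \onto k$, and Lemma \ref{small} is derived from the characterisation you are trying to prove. With that substitution the ``only if'' direction becomes a tautology and the ``if'' direction proves the wrong statement. The notion being characterised is the descending chain condition on simplicial ideals of $A_{\bullet}$. For ``only if'' one should argue directly from DCC: the chain $\m \supseteq \m^2 \supseteq \cdots$ stabilises, and $\m^n=\m^{n+1}$ forces $\m^n=0$ by the usual argument for Artinian local rings; and if $N(\cot A)$ were infinite-dimensional, Dold--Kan would give an infinite strictly descending chain of subcomplexes of $N(\cot A)$, hence of simplicial subspaces of $\cot A$, whose preimages in $\m$ form an infinite strictly descending chain of ideals.

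Your ``if'' direction does contain the correct key computation --- the levelwise surjections $(\cot A)^{\ten i} \onto \m^i/\m^{i+1}$ together with the fact that $N(V\ten W)$ is bounded and degreewise finite-dimensional when $N(V)$ and $N(W)$ are --- but it should be aimed at showing that $N(\m(A))$ has finite total length, so that simplicial ideals of $A$ correspond under Dold--Kan to subcomplexes of a finite-length complex, whence DCC. Two smaller points. First, the dimension bound on $N(V\ten W)$ is not literally the Eilenberg--Zilber theorem, which only exhibits $N(V)\ten N(W)$ as a direct summand of $N(V\ten W)$ up to homotopy equivalence (and the complement is genuinely nonzero, e.g.\ for $V=W$ with $N(V)=k$ in degree $1$); the bound follows instead from the Dold--Kan count $\dim (V\ten W)_m=\sum_j \binom{m}{j}\dim N(V\ten W)_j$ together with the observation that a product of two polynomials in $m$ of degrees $d_V$ and $d_W$ has degree $d_V+d_W$, so $N(V\ten W)$ is concentrated in degrees at most $d_V+d_W$. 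Second, the base of your tower of small extensions should be $k$, not $\L$.
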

\begin{proof}
\cite{ddt1} Lemma \ref{ddt1-cotdef}
\end{proof}

 As in \cite{descent},  we say that a functor is left exact if it preserves all finite limits. This is equivalent to saying that it preserves final objects and fibre products.

\begin{definition}\label{spdef}
Define $\Sp$ to be  the category  of left-exact functors from $\C_{\L}$ to $\Set$. 
Define  $c\Sp$ to be the category of left-exact functors from $s\C_{\L}$ to $\Set$.
\end{definition}

\begin{definition}
Given a functor $F:\C_{\L} \to \Set$, we write $F:s\C_{\L} \to \Set$ to mean $A \mapsto F(A_0)$  (corresponding to the inclusion $\Sp \into c\Sp$).
\end{definition}

\subsection{Properties of morphisms}

\begin{definition}\label{smoothdef}
As in \cite{Man}, we say that a functor $F:\C_{\L}\to \Set$ is smooth if for all surjections $A \to B$ in $\C_{\L}$, the map $F(A) \to F(B)$ is surjective. 
\end{definition}

\begin{definition}
We say that a map $f:A \to B$ in $s\hat{\C}_{\L}$ is acyclic if $\pi_i(f):\pi_i(A) \to \pi_i(B)$ is an isomorphism of pro-Artinian $\L$-modules for all $i$.  $f$ is said to be surjective if each $f_n:A_n \to B_n$ is  surjective.
\end{definition}

Note that for any simplicial abelian group $A$, the homotopy groups can be calculated by $\pi_iA \cong \H_i(NA)$, the homology groups of the normalised chain complex. These in turn are isomorphic to the homology groups of the unnormalised chain complex associated to $A$. 

\begin{definition}
We define a small extension $e:I \to A \to B$ in $s\C_{\L}$ to consist of a surjection $A \to B$ in $s\C_{\L}$ with kernel $I$, such that $\m(A)\cdot I=0$. Note that this implies that $I$ is a simplicial complex of $k$-vector spaces.
\end{definition}

\begin{lemma}\label{small}
Every surjection in $s\C_{\L}$ can be factorised as a composition of small extensions. Every acyclic surjection in $s\C_{\L}$ can be factorised as a composition of acyclic small extensions.  
\end{lemma}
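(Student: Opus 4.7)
The first statement follows by the $\m$-adic filtration on the kernel. Given a surjection $f\co A \onto B$ in $s\C_{\L}$ with kernel $I$, the Artinian hypothesis (Lemma \ref{cotdef}) gives $\m(A)^n = 0$ for some $n\geq 1$, so the descending filtration
$$ I \supseteq \m(A)I \supseteq \m(A)^2 I \supseteq \cdots \supseteq \m(A)^n I \;=\; 0 $$
is finite. Setting $A_j := A/\m(A)^j I$, each $A_j$ is easily seen to lie in $s\C_{\L}$ (it is a quotient by a simplicial ideal, remains Artinian, and still has residue field $k$), and we obtain a factorisation $A = A_n \onto A_{n-1}\onto\cdots\onto A_0 = B$ of $f$. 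The kernel of $A_{j+1}\onto A_j$ is $\m(A)^j I/\m(A)^{j+1}I$, which is annihilated by $\m(A)$ by construction, hence each step is a small extension.

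For the acyclic statement, the above filtration is not immediately sufficient, since even when $I$ is acyclic the associated graded pieces $\m(A)^j I/\m(A)^{j+1} I$ need not be acyclic individually: the long exact sequence on homotopy attached to $0 \to \m I \to I \to I/\m I \to 0$ only gives $\pi_{*+1}(I/\m I)\cong \pi_*(\m I)$, not the vanishing of either side. My plan is therefore to refine the $\m$-adic filtration. Working inductively on the nilpotence order $n$ with $\m(A)^n I = 0$, the case $n=1$ is immediate (then $I$ is already a simplicial $k$-vector space killed by $\m(A)$, so $f$ is itself an acyclic small extension), and for $n\geq 2$ I would interpose a further filtration coming from the simplicial Dold--Kan decomposition (Definition \ref{N^s}) of the acyclic simplicial $k$-vector space $I/\m(A)I$. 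Concretely, $N(I/\m(A)I)$ is an acyclic (and, by Lemma \ref{cotdef}, degreewise finite-dimensional) chain complex of $k$-vector spaces, hence splits as a direct sum of elementary acyclic complexes $D^{m}(V)=(0\to V\xrightarrow{\id}V\to 0)$ in degrees $m,m-1$; pulling these pieces back to $A$-submodules of $I$ and intersecting with the $\m$-adic filtration should give a refined filtration each of whose quotients is both annihilated by $\m(A)$ and (being a single disk) acyclic.

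The main obstacle will be the pulling-back step: the Dold--Kan splitting is canonical only as a splitting of $\L$-modules, not of $A$-modules, so one has to arrange the lifts so that they remain simplicial $A$-submodules of $I$. I would handle this by choosing lifts greedily --- lifting one disk summand at a time and modifying subsequent choices by elements of $\m(A) I$ --- and by verifying that at each stage the quotient $I/V$ by the submodule $V$ peeled off is still acyclic, so that the remaining surjection $A/V\onto B$ is still acyclic and one can iterate. Equivalently, one can rephrase the inductive step as the assertion that every acyclic surjection whose kernel is killed by $\m(A)^n$ factors through an acyclic small extension $A\onto A/V$ with $V$ acyclic and $\m(A)V=0$; existence of such a $V$ reduces, via the Dold--Kan correspondence, to splitting off a single elementary disk summand from the acyclic simplicial $k$-vector space $\m(A)^{n-1} I$, which is non-zero as soon as $I\neq 0$.
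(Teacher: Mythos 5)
Your proof of the first statement (the $\m$-adic filtration on the kernel) is correct. The second statement is where the real content lies, and the step you ultimately reduce to --- ``splitting off a single elementary disk summand from the acyclic simplicial $k$-vector space $\m(A)^{n-1}I$'' --- does not exist in general: $\m(A)^{n-1}I$ is itself one of the graded pieces $\m(A)^jI/\m(A)^{j+1}I$ whose possible non-acyclicity you correctly flagged at the start, so it need not be acyclic, and a non-acyclic simplicial vector space may contain no disk at all. Concretely, take $\L=k$, let $P$ be the acyclic simplicial vector space whose normalisation is the sum of two contractible complexes $k\xra{\id}k$, placed in degrees $(1,0)$ and $(2,1)$ with degree-$1$ generators $u$ and $v$ respectively, and let $f\co P\to P$ be the simplicial endomorphism whose normalisation sends $u\mapsto v$ and vanishes on everything else (this is a chain map, and $f^2=0$). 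Then $A:=k[x]/(x^2)\oplus P$, the square-zero extension of the constant simplicial ring $k[x]/(x^2)$ by $P$ with $x$ acting as $f$, lies in $s\C_k$, and $A\onto k[x]/(x^2)$ is an acyclic surjection with kernel $I=P$ and $\m(A)^2I=0$. But $\m(A)I=f(P)$ has normalisation $kv$ concentrated in degree $1$ with zero differential: it is not acyclic, and its only simplicial subspaces are $0$ and itself, so there is no disk to split off. The same example defeats your fallback plan of lifting disk summands of $I/\m(A)I$: the unique disk summand there is spanned by $\bar{u}$ in degree $1$, and every lift $k(u+cv)$ of it fails to be $f$-stable, no matter how you adjust by elements of $\m(A)I$. (The factorisation does exist for this $A$, but its first step peels off the degree-$(2,1)$ disk, which lies in neither $\m(A)I$ nor any lift of a summand of $I/\m(A)I$.)

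The repair is to look inside the annihilator $Z:=\{w\in I\,:\,\m(A)w=0\}$ rather than inside $\m(A)^{n-1}I$, and to exploit acyclicity at the \emph{top} of the normalisation rather than at the bottom. The identity $\pd_i(aw)=\pd_i(a)\pd_i(w)$ shows $Z$ is a simplicial subspace of $I$, and since $A$ acts on $Z$ through $A/\m(A)=k$, every simplicial $k$-subspace of $Z$ is automatically a simplicial ideal of $A$. Let $m$ be the top degree with $N(I)_m\neq 0$ (so $m\geq 1$ if $I\neq 0$, since otherwise $\pi_0(I)=N(I)_0\neq 0$). Acyclicity makes $\pd_0\co N(I)_m\to N(I)_{m-1}$ injective, while $N(I)_m$ is an $A_m$-submodule of $I_m$ killed by a power of $\m(A)_m$, so its last non-vanishing $\m$-power yields a non-zero $x\in N(I)_m\cap Z_m$ with $\pd_0x\neq 0$. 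The simplicial subspace $V\subseteq Z$ generated by $x$ then has $N(V)=(kx\xra{\sim}k\,\pd_0x)$, a single disk: it is an acyclic simplicial ideal with $\m(A)V=0$, and $I/V$ is again acyclic with $\dim_kN(I/V)=\dim_kN(I)-2$, so one concludes by induction on $\dim_kN(I)$ rather than on the nilpotency index. (Note that the paper itself gives no argument here, deferring to \cite{ddt1}, so the comparison above is with a completed version of that argument rather than with a proof printed in this paper.)
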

\begin{proof}
\cite{ddt1} Lemma \ref{ddt1-small}.
\end{proof}

\begin{definition}
We say that a morphism $\alpha:F\to G$ in $c\Sp$  is smooth if for all small extensions $A \onto B$ in $s\C_{\L}$, the map $F(A) \to F(B)\by_{G(B)}G(A)$ is surjective. 

Similarly, we call $\alpha$ quasi-smooth if for all acyclic small extensions $A \to B$ in $s\C_{\L}$, the map $F(A) \to F(B)\by_{G(B)}G(A)$ is surjective.
\end{definition}

\begin{lemma}\label{ctod} A morphism  $\alpha:F\to G$ in $\Sp$ is smooth if and only if the induced morphism between the  objects $F, G \in c\Sp$ is quasi-smooth, if and only if it is smooth.
\end{lemma}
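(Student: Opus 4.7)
The plan is to establish a cycle of implications (a) $\Rightarrow$ (c) $\Rightarrow$ (b) $\Rightarrow$ (a), where (a) denotes smoothness of $\alpha$ in $\Sp$, (c) denotes smoothness of the induced morphism in $c\Sp$, and (b) denotes its quasi-smoothness in $c\Sp$. The implication (c) $\Rightarrow$ (b) is immediate, since every acyclic small extension is in particular a small extension. For (a) $\Rightarrow$ (c), any small extension $A \onto B$ in $s\C_\L$ restricts at level $0$ to a map $A_0 \onto B_0$ which is either an identity (when the kernel lies in strictly positive degrees) or a surjection in $\C_\L$; since the inclusion $\Sp \hookrightarrow c\Sp$ makes $F$ and $G$ factor through level $0$, hypothesis (a) immediately supplies the required lifting.

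The substantive direction is (b) $\Rightarrow$ (a). Given a surjection $A \onto B$ in $\C_\L$, I first factor it as a composition of small extensions (the $\C_\L$-analogue of Lemma \ref{small}, by filtering the kernel by powers of $\m(A)$) and reduce by a standard inductive lifting argument to the case of a single small extension with kernel a finite-dimensional $k$-vector space $I$. After fixing a $k$-linear splitting, $A$ is classified by a Hochschild $2$-cocycle $\psi \in Z^2(B, I)$.

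The key step is to realize this $\C_\L$-extension as the level-$0$ part of an acyclic small extension in $s\C_\L$, so that (b) can be applied. Take $Y$ to be the constant simplicial ring $B$, and let $K$ be the simplicial $k$-vector space corresponding under Dold-Kan to the acyclic two-term chain complex $I \xleftarrow{\id} I$, so that $K_0 = I$, each $K_n$ is finite-dimensional, and $\pi_* K = 0$. The iterated degeneracies $s_0^n \colon K_0 \to K_n$ assemble into a simplicial map $\iota$ from the constant simplicial object $I$ to $K$ with $\iota_0 = \id$. Define $X_n := B \oplus_{\iota_n \psi} K_n$ to be the square-zero $\L$-algebra extension of $B$ classified by $\iota_n \circ \psi$; the simplicial identities for $\iota$ together with the constancy of $\psi$ guarantee that the face and degeneracy maps are $\L$-algebra homomorphisms, and Lemma \ref{cotdef} (using $\cot X \cong \cot B \oplus K$ and $\m(X)^m = \m(B)^m$ for $m \geq 2$) verifies that $X \in s\C_\L$. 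By construction $X_0 = A$, $Y_0 = B$, and $X \onto Y$ is an acyclic small extension, so hypothesis (b) yields the surjectivity of $F(A) \to F(B) \times_{G(B)} G(A)$.

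The main obstacle is the construction of this simplicial thickening. One cannot naively take $X = A$ viewed as a constant simplicial ring, since then $\ker(X \to Y)$ is constant $I$ with $\pi_0 = I \neq 0$, which violates acyclicity. The Dold-Kan construction replaces $I$ by a contractible $K$ while preserving its level-$0$ data, and the degeneracy tower transports the single classifying cocycle $\psi$ into a compatible simplicial family, ensuring that $X_0$ is genuinely $A$ and not merely the possibly-inequivalent split extension $B \ltimes I$.
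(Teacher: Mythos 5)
Your overall strategy is the correct one, and two of the three implications are handled properly: smoothness in $c\Sp$ trivially implies quasi-smoothness there, and a small extension in $s\C_{\L}$ restricts in level zero to a small extension (or isomorphism) in $\C_{\L}$, so the passage from $\Sp$ to $c\Sp$ is immediate because the induced functors only see $A_0$. The substantive direction is, as you identify, quasi-smoothness in $c\Sp$ implying smoothness in $\Sp$, and your key idea --- realise a small extension $A \onto B$ of $\C_{\L}$ as the level-zero part of an acyclic small extension in $s\C_{\L}$ whose kernel is the Dold--Kan object $K$ of $I \xla{\id} I$, with the degeneracy tower $s_0^n$ propagating the level-zero data --- is exactly the mechanism that makes the lemma true, and your verification of the simplicial identities for $\iota$ and of the Artinian and acyclicity conditions is sound.

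The genuine flaw is the sentence ``after fixing a $k$-linear splitting, $A$ is classified by a Hochschild $2$-cocycle $\psi \in Z^2(B,I)$''. Objects of $\C_{\L}$ need not be $k$-algebras, and the surjection $A \onto B$ need not admit any additive section: for $\L=\Z_p$, $k=\mathbb{F}_p$, the small extension $\Z/p^2 \onto \Z/p$ has non-split underlying abelian group extension, so no object of the form $B\oplus_{\psi}I$ can recover $A$ (its additive group would be $(\Z/p)^2$). Your construction of the total space $X_n = B\oplus_{\iota_n\psi}K_n$ therefore only works in the equicharacteristic case, whereas the lemma is needed over an arbitrary complete local Noetherian $\L$. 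The repair is cheap and keeps your kernel intact: rather than building $X_n$ from a cocycle, push out the given extension $0 \to I \to A \to B \to 0$ along $\iota_n\co I \to K_n$, i.e.\ set $X_n := (A\oplus K_n)/\{(v,-\iota_n v): v\in I\}$ with $K_n$ square-zero; the simplicial identities you already checked for $\iota$ make $X$ a simplicial ring with $X_0\cong A$ and $\ker(X\to B)=K$ acyclic. Equivalently one may take the \v{C}ech nerve $X=\cosk_0(A/B)$, $X_n = A\by_B\cdots\by_B A$, whose kernel is canonically this same $K$. With that replacement your argument goes through in all characteristics.
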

\begin{proof}
\cite{ddt1} Lemma \ref{ddt1-ctod}.
\end{proof}

\subsection{Derived deformation functors}

\begin{definition}
Define the  $sc\Sp$ to be the category of left-exact functors from $s\C_{\L}$ to the category $\bS$ of simplicial sets.
\end{definition}
\begin{definition}\label{scspqsdef}
 A morphism $\alpha:F\to G$ in    $sc\Sp $ is  said to be smooth if 
\begin{enumerate}
\item[(S1)]
for every acyclic surjection $A \to B$ in $s\C_{\L}$, the map $F(A)\to F(B)\by_{G(B)}G(A)$ is a trivial fibration in $\bS$; 
\item[(S2)]
for every surjection $A \to B$ in $s\C_{\L}$, the map $F(A)\to F(B)\by_{G(B)}G(A)$ is a surjective fibration in $\bS$.
\end{enumerate}

A morphism $\alpha:F\to G$ in    $sc\Sp $  is  said to be quasi-smooth if it satisfies (S1) and
\begin{enumerate}
\item[(Q2)]
for every surjection $A \to B$ in $s\C_{\L}$, the map $F(A)\to F(B)\by_{G(B)}G(A)$ is a  fibration in $\bS$.
\end{enumerate}
\end{definition}

\begin{definition}
Given $A \in s\C_{\L}$ and a finite simplicial set $K$, define $A^K \in \C_{\L}$ by 
$$
(A^K)_i:=\Hom_{\bS}(K\by \Delta^i, A)\by_{\Hom_{\Set}(\pi_0K, k)}k.
$$
\end{definition}

\begin{definition}\label{underline}
Given   $F \in sc\Sp$, define $\underline{F}:s\C_{\L}\to \bS$ by 
$$
\underline{F}(A)_n:= F_n(A^{\Delta^n}).
$$
 
For $F \in c\Sp$, we may regard $F$ as an object of $sc\Sp$ (with the constant simplicial structure), and then define $\underline{F}$  as above.
\end{definition}

\begin{lemma}\label{settotop} A map $\alpha:F\to G $ in $c\Sp$ is smooth (resp. quasi-smooth) if and only if the induced map of functors  $\underline{\alpha}:\underline{F}\to \underline{G}$  is smooth (resp. quasi-smooth) in $sc\Sp$.
\end{lemma}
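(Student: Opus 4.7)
The strategy is to translate the fibration / trivial-fibration conditions defining (quasi-)smoothness in $sc\Sp$ into surjectivity statements for $F$ and $G$ evaluated on pullbacks in $s\C_\L$, and then to match these directly against the surjectivity conditions defining (quasi-)smoothness in $c\Sp$. The first step is the calculation, using left-exactness of $F$ and $G$, that $\Hom_{\bS}(K,\underline{F}(A))=F(A^K)$ for any finite simplicial set $K$, since $A^K=\lim_{\Delta^m\to K}A^{\Delta^m}$. Unwinding the pullback $\underline{F}(B)\by_{\underline{G}(B)}\underline{G}(A)$ and applying left-exactness once more, the matching map of $\underline{F}(A)\to \underline{F}(B)\by_{\underline{G}(B)}\underline{G}(A)$ along an inclusion $K\hookrightarrow \Delta^n$ identifies with
$$F(A^{\Delta^n})\;\lra\; F(C)\by_{G(C)}G(A^{\Delta^n}),\qquad C:=A^K\by_{B^K}B^{\Delta^n}.$$
Surjectivity of this map is precisely the $c\Sp$-smoothness condition for $\alpha$ at the morphism $A^{\Delta^n}\to C$ of $s\C_\L$.

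The second ingredient is a Reedy-style Leibniz rule for the cotensor $(A,K)\mapsto A^K$: if $A\to B$ is a surjection in $s\C_\L$ and $K\hookrightarrow \Delta^n$ is an inclusion of finite simplicial sets, then $A^{\Delta^n}\to C$ is a surjection, and is acyclic whenever either $A\to B$ is acyclic or the inclusion $K\hookrightarrow \Delta^n$ is a weak equivalence. This is a standard pushout-product consequence of the simplicial enrichment of $s\C_\L$, and combined with Lemma \ref{small} it lets us factor each such (acyclic) surjection into a composition of (acyclic) small extensions, which is the form in which the hypotheses on $\alpha$ are stated.

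With these two inputs both directions are essentially bookkeeping. \emph{Forward}: assume $\alpha$ is smooth in $c\Sp$. Taking $K=\Lambda^n_k\hookrightarrow \Delta^n$ with $A\to B$ any surjection produces, via the Leibniz rule, an acyclic surjection $A^{\Delta^n}\to C$, and the (implied) quasi-smoothness of $\alpha$ applied to its small-extension factors yields the required horn lifting; the case $K=\emptyset$, $n=0$ has $C=B$ and reduces directly to $F(A)\to F(B)\by_{G(B)}G(A)$, giving level-zero surjectivity. Together these verify (S2); an analogous argument with $K=\pd\Delta^n$ and an acyclic surjection $A\to B$ verifies (S1). If $\alpha$ is only quasi-smooth in $c\Sp$, the same analysis verifies (S1) and (Q2), the latter requiring no level-zero surjectivity. \emph{Backward}: taking $K=\emptyset$, $n=0$ in the matching map identifies its level-zero component with $F(A)\to F(B)\by_{G(B)}G(A)$, so the surjectivity demanded by (S2) on an arbitrary small extension (respectively by the level-zero part of (S1) on an acyclic small extension) gives back (quasi-)smoothness of $\alpha$ in $c\Sp$. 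The only non-formal step is the cotensor Leibniz rule; everything else is unwinding the pullback diagram and invoking Lemma \ref{small}.
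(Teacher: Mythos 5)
Your argument is correct and reconstructs essentially the proof of the cited result: the paper itself defers entirely to [ddt1], where the lemma is proved exactly as you do --- identifying horn/boundary lifting for $\underline{\alpha}$ with surjectivity of $F(A^{\Delta^n})\to F(C)\times_{G(C)}G(A^{\Delta^n})$ via left-exactness and the cotensor adjunction, establishing the Leibniz rule for $(A,K)\mapsto A^K$, and reducing to small extensions via Lemma \ref{small}. The one point to state more carefully is that the Leibniz rule is not literally ``SM7 for $s\C_{\L}$'' (which is not a simplicial model category); it is verified by passing to maximal ideals, where $\m(A^K)=\m(A)^K$ and the claim follows from the fact that surjections of simplicial modules are fibrations together with SM7 and Dold--Kan in that abelian setting.
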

\begin{proof}
\cite{ddt1} Lemma \ref{ddt1-settotop}.
\end{proof}

The following Lemma will provide many examples of functors which are quasi-smooth but not smooth. 
\begin{lemma}\label{sm7}
If $F\to G$ is a quasi-smooth map of functors $F,G:s\C_{\L} \to \bS$, and $K \to L$ is a cofibration in $\bS$, then 
$$
F^L \to F^K\by_{G^K}G^L
$$
is quasi-smooth.
\end{lemma}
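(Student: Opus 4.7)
The plan is to reduce the claim to Quillen's simplicial pushout-product axiom (SM7) in $\bS$, applied pointwise to each $A \in s\C_{\L}$.

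First, I would fix a surjection $j: A \to B$ in $s\C_{\L}$ and set $X := F(A)$ and $Z := F(B) \by_{G(B)} G(A)$, both viewed as simplicial sets. By quasi-smoothness of $F \to G$ in $sc\Sp$ (conditions (S1) and (Q2) of Definition \ref{scspqsdef}), the natural map $p: X \to Z$ is a Kan fibration, which is moreover a weak equivalence when $j$ is acyclic.

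Next I would unravel the pullback-power map. Since $(-)^K$ in $\bS$ is a right adjoint (to $(-) \by K$), it preserves the finite limit defining $Z$, so $Z^K \cong F(B)^K \by_{G(B)^K} G(A)^K$ and similarly for $Z^L$. Combined with the left-exactness of $F$ and $G$, a short diagram chase identifies the comparison map
$$
F^L(A) \lra F^L(B) \by_{(F^K \by_{G^K} G^L)(B)} (F^K \by_{G^K} G^L)(A)
$$
with the Leibniz cotensor map $X^L \to X^K \by_{Z^K} Z^L$ associated to the cofibration $i: K \to L$ and to $p$.

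Finally I would invoke the standard simplicial model-category axiom (SM7) in $\bS$: since $i$ is a cofibration and $p$ is a (trivial) Kan fibration, the Leibniz cotensor $X^L \to X^K \by_{Z^K} Z^L$ is a (trivial) Kan fibration. This verifies (Q2) for every surjection $A \to B$ and (S1) for every acyclic surjection, so $F^L \to F^K \by_{G^K} G^L$ is quasi-smooth. The main obstacle is purely notational, namely matching the two iterated fibre products in the identification step; once this is done, the conclusion is an immediate application of SM7.
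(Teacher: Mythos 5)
Your proof is correct and follows exactly the same route as the paper, which simply observes that the statement is an immediate consequence of axiom SM7 for the simplicial model category $\bS$. The extra detail you supply — identifying the comparison map at a surjection $A\to B$ with the Leibniz cotensor of the cofibration $K\to L$ against the (trivial) fibration $F(A)\to F(B)\by_{G(B)}G(A)$ — is precisely the unwinding the paper leaves implicit.
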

\begin{proof}
This is an immediate consequence of the fact that $\bS$ is a simplicial model category, following from axiom SM7, as given in \cite{sht} \S II.3.
\end{proof}

The following lemma is a consequence of  standard properties of fibrations and trivial fibrations in $\bS$.
\begin{lemma}\label{basechange}
If $F\to G$ is a quasi-smooth map of functors $F,G:s\C_{\L} \to \bS$, and $H \to G$ is any map of functors, then $F\by_GH \to H$ is quasi-smooth. 
\end{lemma}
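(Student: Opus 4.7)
The plan is to unpack the definition and reduce to stability of (trivial) fibrations in $\bS$ under base change.

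First I would observe that limits of $\bS$-valued functors are formed pointwise, so for any $A \in s\C_\L$,
$$(F\by_G H)(A) = F(A)\by_{G(A)} H(A).$$
To check that $F\by_G H \to H$ is quasi-smooth, I need to verify conditions (S1) and (Q2) of Definition \ref{scspqsdef} for the map
$$(F\by_G H)(A) \lra (F\by_G H)(B)\by_{H(B)} H(A)$$
associated to a surjection $A \onto B$ in $s\C_\L$.

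The key step is a direct computation identifying this map. Using the pointwise formula and rewriting the iterated fibre product, the target becomes
$$\bigl(F(B)\by_{G(B)} H(B)\bigr)\by_{H(B)} H(A) \cong F(B)\by_{G(B)} H(A),$$
so the map in question is
$$F(A)\by_{G(A)} H(A) \lra F(B)\by_{G(B)} H(A).$$
This is precisely the pullback of the map $F(A) \to F(B)\by_{G(B)} G(A)$ along the morphism $H(A) \to G(A)$ (induced by $H \to G$), since pulling back the second projection $F(B)\by_{G(B)} G(A) \to G(A)$ along $H(A) \to G(A)$ yields $F(B)\by_{G(B)} H(A)$.

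The remainder is immediate: by hypothesis, $F(A) \to F(B)\by_{G(B)} G(A)$ is a fibration in $\bS$, and a trivial fibration if $A \to B$ is moreover acyclic. Since fibrations and trivial fibrations in $\bS$ are stable under base change, so is the map $(F\by_G H)(A) \to (F\by_G H)(B)\by_{H(B)} H(A)$, verifying (Q2) and (S1) respectively. There is no genuine obstacle here — the only thing to watch is the bookkeeping that rewrites the iterated fibre product cleanly so that the base-change structure is visible.
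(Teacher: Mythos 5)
Your proposal is correct and is exactly the argument the paper has in mind: the paper states the lemma with only the remark that it follows from standard stability of fibrations and trivial fibrations under base change, and your computation identifying $(F\by_G H)(A)\to (F\by_G H)(B)\by_{H(B)}H(A)$ as a pullback of $F(A)\to F(B)\by_{G(B)}G(A)$ is precisely the bookkeeping being left implicit. Nothing further is needed.
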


\begin{definition}\label{pioqs}
A map $ \alpha:F\to G$ of functors   $F,G:\C_{\L}\to \bS$ is said to be smooth (resp. quasi-smooth, resp. trivially smooth)  if for all surjections $A \onto B$ in $\C_{\L}$, the maps 
$$
 F(A) \to F(B)\by_{G(B)}G(A)
$$ 
  are surjective fibrations (resp. fibrations, resp. trivial fibrations).
\end{definition}

\begin{proposition}\label{smoothchar}
A map  $ \alpha:F\to G$  of left-exact functors   $F,G:\C_{\L}\to \bS$ is smooth if and only if the maps  $F_n\xra{\alpha_n} G_n$ of functors   $F_n,G_n:\C_{\L}\to \Set$ are all smooth.
\end{proposition}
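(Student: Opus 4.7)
I will treat the two implications separately. The forward direction is essentially formal, while the reverse direction rests on the structure of small extensions together with the standard fact that principal bundles under simplicial groups are Kan fibrations.

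For $(\Rightarrow)$, suppose $\alpha$ is smooth and fix a surjection $A\onto B$ in $\C_\L$. By definition, $F(A)\to F(B)\by_{G(B)}G(A)$ is a surjective fibration in $\bS$, hence levelwise surjective. Since finite limits in $\bS$ are computed degreewise, passing to the $n$-th level yields surjectivity of $F_n(A)\to F_n(B)\by_{G_n(B)}G_n(A)$, so each $F_n\to G_n$ is smooth.

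For $(\Leftarrow)$, suppose each $F_n\to G_n$ is smooth. Using Lemma \ref{small} and the fact that surjective fibrations are closed under composition and pullback, I reduce to the case of a small extension $I\to A\to B$. Here $\m(A)\cdot I=0$ makes $I$ a $k$-vector space, and a direct check produces an isomorphism $A\by_B A\cong A\oplus I$ in $\C_\L$ via $(a,a')\mapsto (a,a'-a)$. Applying the left-exact functor $F$ gives
\[
F(A)\by_{F(B)}F(A)\;\cong\;F(A)\by T_F(I),\qquad T_F(I):=F(k\oplus I),
\]
and left-exactness further shows that $T_F$ sends finite direct sums of $k$-vector spaces to products, so $T_F(I)$ is canonically a simplicial $k$-module. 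Naturality of $\alpha$ then yields an analogous identification
\[
F(A)\by_P F(A)\;\cong\;F(A)\by K(I),\qquad P:=F(B)\by_{G(B)}G(A),\quad K(I):=\ker\bigl(T_F(I)\to T_G(I)\bigr),
\]
exhibiting a free action of the simplicial abelian group $K(I)$ on $F(A)$ over $P$.

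Smoothness of each $F_n\to G_n$ applied to the small extension provides levelwise surjectivity of $F(A)\to P$, so this map realises $P$ as the orbit space $F(A)/K(I)$ and is therefore a principal $K(I)$-bundle. Since simplicial groups are Kan, principal bundles under them are Kan fibrations, whence $F(A)\to P$ is a surjective Kan fibration, i.e.\ $\alpha$ is smooth. The main technical point is the identification $A\by_B A\cong A\oplus I$ and the clean transport of this identity through left-exactness; the remainder is the citation of the standard principal-bundle fact from simplicial homotopy theory.
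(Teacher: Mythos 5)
Your argument is correct and takes essentially the same route as the proof in \cite{ddt1} to which the paper defers: reduce to a small extension $I\to A\to B$, use the isomorphism $A\times_B A\cong A\times_k(k\oplus I)$ and left-exactness to exhibit the fibres of $F(A)\to F(B)\times_{G(B)}G(A)$ as torsors under the simplicial abelian group $\ker\bigl(F(k\oplus I)\to G(k\oplus I)\bigr)$, and conclude via the standard fact that a free action of a simplicial group makes the quotient map a principal, hence Kan, fibration. No gaps.
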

\begin{proof}
\cite{ddt1} Proposition \ref{ddt1-smoothchar}.
\end{proof}

\begin{proposition}\label{ctodtnew}
If a morphism $F\xra{\alpha} G$ of left-exact functors $F,G:s\C_{\L} \to \bS$ is such that the maps
$$
\theta: F(A)\to F(B)\by_{G(B)}G(A)
$$
 are surjective fibrations  for all acyclic small extensions $A \to B$, then $\underline{\alpha}:\underline{F} \to \underline{G}$ is quasi-smooth (resp. smooth) if and only if  $\theta$ is a fibration (resp. surjective fibration) for all small extensions $A \to B$.
\end{proposition}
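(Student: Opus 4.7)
The plan is to reduce the Kan-fibration conditions on $\underline{\alpha}$ to RLP conditions on $\theta$, via the cotensor construction $A \mapsto A^K$ and its compatibility with small extensions. By Lemma \ref{small}, every (acyclic) surjection in $s\C_\L$ factors into (acyclic) small extensions, so since (trivial) fibrations in $\bS$ are closed under composition, it suffices to verify (S1), (Q2), (S2) of Definition \ref{scspqsdef} only on small extensions, and similarly to test $\theta$ only on small extensions.

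The crucial technical ingredient is the compatibility: for a small extension $A \twoheadrightarrow B$ in $s\C_\L$ with kernel $I$ (a simplicial $k$-vector space, since $\m(A) \cdot I = 0$), and a cofibration $K \hookrightarrow L$ of finite simplicial sets, the pullback
\[
A^L \longrightarrow A^K \times_{B^K} B^L
\]
is again a small extension, with kernel $\ker(I^L \to I^K)$. By SM7 applied to the category of simplicial $k$-modules (all of whose objects are fibrant), this kernel is contractible whenever either $I$ itself is contractible (i.e., $A \to B$ is acyclic) or $K \hookrightarrow L$ is a trivial cofibration. Hence the derived small extension is acyclic in both these cases.

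For the ``if'' direction, writing $\underline{F}(A)$ as the diagonal of the bisimplicial set $(p,q) \mapsto F_p(A^{\Delta^q})$ and combining with the above compatibility, the horn/boundary-filling problems for $\underline{F}(A) \to \underline{F}(B) \times_{\underline{G}(B)} \underline{G}(A)$ against $\Lambda^n_k \hookrightarrow \Delta^n$ (resp.\ $\partial \Delta^n \hookrightarrow \Delta^n$) translate into RLP problems for $\theta$ on the corresponding derived small extensions. The acyclic sub-cases --- condition (S1) whenever $A \to B$ is acyclic, and condition (Q2) via horn trivial cofibrations --- are immediately handled by the proposition's standing hypothesis; the genuinely new case, condition (S2) with $\partial \Delta^n \hookrightarrow \Delta^n$ and non-acyclic $A \to B$, is where the assumption that $\theta$ is a surjective fibration on all small extensions enters. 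For the converse ``only if'' direction, the identification $A^{\Delta^0} = A$, together with a bisimplicial matching-object argument for the diagonal, extracts the required (trivial) fibration property of $\theta$ on $A \to B$ from that of $\underline{\alpha}$.

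The main obstacle will be the bisimplicial translation itself: precisely identifying RLP conditions on the diagonal $\underline{F}(A)$ with RLP conditions on $\theta$ for the derived small extensions requires careful Reedy / matching-object bookkeeping, though once this framework is in place the proof proceeds formally by combining the standing hypothesis with the assumed (or concluded) condition on $\theta$.
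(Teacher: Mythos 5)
The paper only cites \cite{ddt1} for this proof; your skeleton does match the cited argument in its main ingredients: the reduction to small extensions via Lemma \ref{small}, and the lemma that for a small extension $A \onto B$ with kernel $I$ and a cofibration $K \into L$ of finite simplicial sets, the map $A^L \to A^K\by_{B^K}B^L$ is a small extension with kernel $\ker(I^L \to I^K)$, acyclic when $A \to B$ is acyclic or $K \into L$ is trivial.

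There is, however, a genuine gap at the central step, and it is masked by an incorrect division of labour between the two hypotheses. You assert that condition (Q2) --- horn lifting against $\Lambda^n_k \into \Delta^n$ for a non-acyclic small extension $A \to B$ --- is ``immediately handled by the standing hypothesis'' because the derived extension $A^{\Delta^n} \to A^{\Lambda^n_k}\by_{B^{\Lambda^n_k}}B^{\Delta^n}$ is acyclic. If that were so, the standing hypothesis alone would make $\underline{\alpha}$ quasi-smooth, and the ``only if'' half of the proposition would then force $\theta$ to be a fibration on every small extension with no further input --- which is false. The flaw lies in the translation step you defer as ``bookkeeping'': since $\underline{F}(A)$ is the diagonal of $(p,q)\mapsto F_p(A^{\Delta^q})$, a horn $\Lambda^n_k \to \underline{F}(A)$ is a compatible family of elements of $F_m(A^{\Delta^m})$ for $m<n$, \emph{not} an element of $F_n(A^{\Lambda^n_k})$ (already $\Hom_{\bS}(\Lambda^1_0,\underline{F}(A))=F_0(A)$ while $F_1(A^{\Lambda^1_0})=F_1(A)$). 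Before the surjectivity of $F(A^{\Delta^n}) \to F(A^{\Lambda^n_k}\by_{B^{\Lambda^n_k}}B^{\Delta^n})\by_{G(\cdots)}G(A^{\Delta^n})$ can be invoked, the horn data must first be lifted in the $F$-simplicial direction, which amounts to horn-lifting for $\theta$ over non-acyclic small extensions such as $A \to B$ itself (and the derived extensions attached to $\pd\Delta^m \into \Delta^m$); this is exactly where the hypothesis that $\theta$ is a fibration for all small extensions is consumed, and it is also why the diagonal of a levelwise fibration of bisimplicial objects is not automatically a fibration. So the deferred ``Reedy/matching-object bookkeeping'' is the real content of the proof, and as currently described your argument would not close up correctly.
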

\begin{proof}
\cite{ddt1} Proposition \ref{ddt1-ctodtnew}.
\end{proof}

\begin{definition}\label{weakdef}
We will say that a morphism $\alpha: F \to G$ of quasi-smooth objects of $sc\Sp$ is a weak equivalence if, for all $A \in s\C_{\L}$, the maps $\pi_i F(A) \to \pi_iG(A)$ are isomorphisms for all $i$.
\end{definition}

\subsection{Quotient spaces}\label{quotsp}

\begin{definition}
Given  functors $X:s\C_{\L} \to \bS$ and $G: s\C_{\L} \to s\Gp$, together with a right action of $G$ on $X$, define the quotient space by
$$
[X/G]_n= (X\by^{G}WG)_n=  X_n\by G_{n-1}\by G_{n-2}\by \ldots G_0,
$$
with operations as standard for universal bundles (see \cite{sht} Ch. V). Explicitly:
\begin{eqnarray*}
\pd_i(x,g_{n-1},g_{n-2}, \ldots,g_0)&=& \left\{ \begin{matrix}(\pd_0x*g_{n-1}, g_{n-2}, \ldots, g_0)& i=0;\\
 (\pd_ix, \pd_{i-1}g_{n-1}, \ldots, (\pd_0g_{n-i})g_{n-i-1}, g_{n-i-2}, \ldots, g_0) & 0<i<n;\\
(\pd_nx, \pd_{n-1}g_{n-1},\ldots, \pd_1g_1) & i=n;
\end{matrix}\right.\\
\sigma_i(x,g_{n-1},g_{n-2}, \ldots,g_0)&=& (\sigma_ix, \sigma_{i-1}g_{n-1}, \ldots, \sigma_0g_{n-i},e,g_{n-i-1}, g_{n-i-2}, \ldots, g_0).
\end{eqnarray*}
The space $[\bullet/G]$ is also denoted $\bar{W}G$, and is a model for the classifying space $BG$ of $G$.  Note replacing $WG$ with  any other fibrant cofibrant contractible $G$-space $EG$ will give the same properties.

\end{definition}

\begin{lemma}
If $G:s\C_{\L} \to s\Gp$ is smooth, then  $\bar{W}G$ is smooth.
\end{lemma}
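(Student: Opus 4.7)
The plan is to verify the two conditions of Definition \ref{scspqsdef} for the structure map from $\bar{W}G$ to the terminal functor. Since the target is trivial, (S1) reduces to the requirement that $\bar{W}G(A) \to \bar{W}G(B)$ be a trivial Kan fibration for every acyclic surjection $A \to B$ in $s\C_{\L}$, and (S2) reduces to it being a surjective Kan fibration for every surjection $A \to B$.

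The main ingredient I will use is the standard fact that the classifying space functor $\bar{W}\co s\Gp \to \bS$ is right Quillen, with left adjoint the loop groupoid functor (see \cite{sht} Ch.~V). Since every simplicial group is fibrant, $\bar{W}$ preserves all fibrations and weak equivalences. The smoothness hypothesis on $G$ supplies that $G(A) \to G(B)$ is a surjective Kan fibration (respectively, trivial fibration) of simplicial sets for every surjection (respectively, acyclic surjection) $A \to B$; being also a homomorphism of simplicial groups, this is automatically a fibration (respectively, trivial fibration) in the standard model structure on $s\Gp$. Applying $\bar{W}$ then yields that $\bar{W}G(A) \to \bar{W}G(B)$ is a Kan (respectively, trivial) fibration, handling the fibration part of both (S1) and (S2).

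For (S2) it remains to confirm levelwise surjectivity, which is immediate from the explicit formula $\bar{W}G(A)_n = G(A)_{n-1} \by G(A)_{n-2} \by \cdots \by G(A)_0$ together with the levelwise surjectivity of $G(A) \to G(B)$ supplied by smoothness. Left exactness of $\bar{W}G$ is inherited from that of $G$, since $\bar{W}$ commutes with products of simplicial groups, placing $\bar{W}G$ in $sc\Sp$ in the first place. I do not foresee any substantive obstacle: the argument amounts to combining the smoothness hypothesis with the standard behaviour of $\bar{W}$.
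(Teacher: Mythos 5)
There is a genuine gap at the key step. You assert that, because $\bar{W}$ is right Quillen and every simplicial group is fibrant, $\bar{W}$ sends fibrations of simplicial groups to Kan fibrations in $\bS$. This is false: the Quillen adjunction of \cite{sht} Ch.~V is between simplicial groups and \emph{reduced} simplicial sets, and the fibrations of that model structure on reduced simplicial sets are not the Kan fibrations, so right Quillen-ness does not deliver what (S1)/(S2) require. Concretely, the inclusion $1 \to \Z$ of constant simplicial groups is a fibration of simplicial groups, yet $\bar{W}(1)=\ast \to \bar{W}(\Z)=B\Z$ is not a Kan fibration: the $1$-simplex $1\in \Z=(B\Z)_1$ restricts to the basepoint on either horn of $\Delta^1$ but is not degenerate, so the relevant horn-filling problem has no lift.

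The correct input --- and the one the paper uses --- is \cite{sht} Corollary V.6.9: if a fibration of simplicial groups is additionally surjective on $\pi_0$, then applying $\bar{W}$ yields a Kan fibration. The smoothness hypothesis hands you exactly this extra condition, since $G(A)\to G(B)$ is a \emph{surjective} fibration for every surjection $A\to B$, hence surjective on $\pi_0$. You do invoke this surjectivity, but only to verify degreewise surjectivity of $\bar{W}G(A)\to \bar{W}G(B)$ from the explicit formula; it is also needed, essentially, to make the fibration step itself go through. Once that citation is corrected, the remainder of your argument (weak equivalences between fibrant objects are preserved, so acyclic surjections give trivial fibrations; left-exactness is inherited) is sound and agrees with the paper's proof.
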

\begin{proof}
For any surjection $A \to B$, we have $G(A) \to G(B)$ fibrant and surjective on $\pi_0$, which by \cite{sht} Corollary V.6.9 implies that $\bar{W}G(A) \to \bar{W}G(B)$ is a fibration. If $A \to B$ is also acyclic, then everything is trivial by properties of $\bar{W}$ and $G$.
\end{proof}

\begin{remark}
Observe that this is our first example of a quasi-smooth functor  which is not a right  Quillen functor for the simplicial model structure. The definitions of smoothness and quasi-smoothness  were designed with $\bar{W}G$ in mind.
\end{remark}

\begin{lemma}
If $X$ is quasi-smooth, then so is $[X/G] \to \bar{W}G$.
\end{lemma}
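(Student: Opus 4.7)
The plan is to verify conditions (S1) and (Q2) of Definition \ref{scspqsdef} directly, reducing the required lifting problems for $[X/G]\to\bar{W}G$ to the corresponding lifting problems for $X(A)\to X(B)$ that come for free from quasi-smoothness of $X$.

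Using the explicit formulas given at the start of \S\ref{quotsp}, at each level $n$ one has $[X/G](R)_n = X_n(R)\by G_{n-1}(R)\by\cdots\by G_0(R)$, and the map
$$
[X/G](A) \lra [X/G](B)\by_{\bar{W}G(B)}\bar{W}G(A)
$$
is, level-wise, the restriction $(x,g_{n-1},\ldots,g_0)\mapsto(x|_B,g_{n-1},\ldots,g_0)$ (under the pullback identification, so that the coordinates $g_i\in G_i(A)$ are preserved). Thus an $n$-simplex of the target is a pair $(\bar x,(g_{n-1},\ldots,g_0))$ with $\bar x\in X_n(B)$ and $(g_{n-1},\ldots,g_0)\in\bar{W}G(A)_n$, and a lift to $[X/G](A)$ amounts to choosing a single $x\in X_n(A)$ with $x|_B=\bar x$; the group coordinates are already determined.

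Next I would unravel the lifting problem against a horn inclusion $\Lambda^n_k\into\Delta^n$ (respectively $\pd\Delta^n\into\Delta^n$ for (S1)). Given a horn $\Lambda^n_k\to[X/G](A)$ whose image in the target extends to an $n$-simplex $(\bar x,g_{n-1},\ldots,g_0)$, the $\bar{W}G(A)$-components of the $(n-1)$-faces of the horn are forced to be the faces of $(g_{n-1},\ldots,g_0)$, and impose no conditions on $x$. The $X$-components of the $j$-th face of any lift are, by the face formulas,
$$
\pd_j x \quad (j>0), \qquad \pd_0 x\cdot g_{n-1} \quad (j=0).
$$
Right-multiplying the $j=0$ datum by $g_{n-1}^{-1}$, which is permissible because $G(A)$ acts on $X(A)$ by automorphisms of simplicial sets, turns the $[X/G](A)$-horn into a genuine horn $\Lambda^n_k\to X(A)$ whose faces are the $\pd_j x$ for $j\neq k$; and $\bar x$ is an extension of its image in $X(B)$ to a full $n$-simplex.

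The lifting problem for $[X/G]\to\bar{W}G$ therefore reduces precisely to the lifting property of $X(A)\to X(B)$ against $\Lambda^n_k\into\Delta^n$ (respectively $\pd\Delta^n\into\Delta^n$). By quasi-smoothness of $X$, i.e.\ of the map $X\to\bullet$, the map $X(A)\to X(B)$ is a fibration for every surjection $A\to B$, and a trivial fibration when $A\to B$ is acyclic; both (Q2) and (S1) follow at once. The only real subtlety is the twist by $g_{n-1}$ in the $\pd_0$ face formula, and once this is absorbed into the $G(A)$-action on $X(A)$ the reduction is essentially immediate.
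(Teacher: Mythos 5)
Your argument is correct and is essentially the paper's proof written out in full: the paper simply cites the observation that for any fibration (resp.\ trivial fibration) $Z \to Y$ of $G$-spaces the induced map $[Z/G] \to [Y/G]$ is a fibration (resp.\ trivial fibration), applied to $X(A) \to X(B)$ regarded as $G(A)$-spaces after identifying $[X/G](B)\by_{\bar{W}G(B)}\bar{W}G(A)$ with $[X(B)/G(A)]$. Your explicit horn-filling computation, with the untwisting of the $\pd_0$-face by $g_{n-1}^{-1}$, is exactly a hands-on verification of that observation in this case, and the details check out.
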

\begin{proof}
This follows from the observation that for any fibration (resp. trivial fibration) $Z \to Y$ of $G$-spaces, $[Z/G]\to [Y/G]$ is a fibration (resp. trivial fibration).
\end{proof}

\begin{corollary}\label{fibquot}
If $X$ is quasi-smooth and $G$  smooth, then  $[X/G]$ is quasi-smooth.
\end{corollary}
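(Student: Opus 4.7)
The plan is to combine the two lemmas immediately preceding this corollary. The first gives that $\bar{W}G \to *$ is smooth whenever $G$ is smooth, and hence quasi-smooth, since any smooth map in $sc\Sp$ is automatically quasi-smooth (surjective fibrations are fibrations, and trivial fibrations are a fortiori surjective fibrations). The second lemma gives that $[X/G] \to \bar{W}G$ is quasi-smooth whenever $X$ is. It then suffices to prove that quasi-smooth maps in $sc\Sp$ are closed under composition, for then the composition $[X/G] \to \bar{W}G \to *$ exhibits $[X/G]$ itself as quasi-smooth.

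To verify this closure property, given a composable pair $F \to G \to H$ and a surjection $A \onto B$ in $s\C_{\L}$, I would factor the canonical map
$$
F(A) \to F(B)\by_{H(B)}H(A)
$$
through the intermediate pullback as
$$
F(A) \to F(B)\by_{G(B)}G(A) \to F(B)\by_{H(B)}H(A).
$$
The first leg is a fibration of simplicial sets by quasi-smoothness of $F \to G$. The second leg is the base change, along the projection $F(B)\by_{H(B)}H(A) \to G(B)\by_{H(B)}H(A)$, of the map $G(A) \to G(B)\by_{H(B)}H(A)$; this latter map is a fibration by quasi-smoothness of $G \to H$, and fibrations in $\bS$ are stable under pullback. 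Composing the two legs yields condition (Q2). For $A \to B$ an acyclic surjection, the same argument produces trivial fibrations at both stages, giving (S1). Specialising to $H = *$ finishes the proof.

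I do not expect a substantive obstacle here: once the two preceding lemmas are in hand, everything reduces to the standard closure of fibrations and trivial fibrations in $\bS$ under composition and pullback, together with the trivial observation that smoothness implies quasi-smoothness. The only mildly delicate point worth stating carefully is the identification of the second leg in the factorisation above as a pullback of the relative map $G(A) \to G(B)\by_{H(B)}H(A)$, but this is a direct computation from the definition of the iterated fibre product.
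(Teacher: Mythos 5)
Your proposal is correct and follows the same route as the paper, which simply says ``Consider the fibration $X \to [X/G] \to \bar{W}G$'': the point is exactly to compose the quasi-smooth map $[X/G]\to\bar WG$ from the second lemma with the smooth (hence quasi-smooth) $\bar WG$ from the first. The closure of quasi-smooth maps under composition, which you verify via the factorisation through the intermediate pullback, is the standard fact the paper leaves implicit.
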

\begin{proof}
Consider the fibration $X \to [X/G] \to \bar{W}G$.
\end{proof}

\subsection{Cohomology and obstructions}\label{cohomology}

Given a quasi-smooth morphism  $\alpha:F\to G$  in $sc\Sp$, there exist $k$-vector spaces  $\H^i(F/G)$ for all $i \in \Z$. 

By \cite{ddt1} Corollary \ref{ddt1-cohowelldfn}, these have the
 property that for any  simplicial  $k$-vector space $V$ with finite-dimensional normalisation,
$$
\pi_m(F(k\oplus V)\by_{G(k\oplus V)}\{0\}) \cong \H^{-m}(F/G\ten V),
$$
where $V^2=0$ and 
$$
H^i(F/G\ten V):=\bigoplus_{n \ge 0} \H^{i+n}(F/G) \ten \pi_n(V).
$$

If $G=\bt$ (the one-point set), we write $\H^j(F):= \H^j(F/\bt)$.

We now have the following characterisation of obstruction theory:

\begin{theorem}\label{robs}
If  $\alpha:F \to G$ in $sc\Sp$ is quasi-smooth, then for any small extension $e:I \to A \xra{f} B$ in $s\C_{\L}$, there is a sequence of sets
$$
\pi_0(FA)\xra{f_*} \pi_0(FB\by_{GB}GA) \xra{o_e}  \H^1(F/G\ten I)
$$  
exact in the sense that the fibre of $o_e$ over $0$ is the image of $f_*$. Moreover,  there is a group action of $\H^0(F/G \ten I)$ on $\pi_0(FA)$ whose orbits are precisely the fibres of $f_*$. 

For any $y \in F_0A$, with $x=f_*y$, the fibre of $FA \to FB\by_{GB}GA$ over $x$ is isomorphic to $\ker(\alpha: FI \to GI)$, and the sequence above 
extends to a long exact sequence
$$\xymatrix@R=0ex{
\cdots  \ar[r]^-{f_*}&\pi_n(FB\by_{GB}GA,x) \ar[r]^-{o_e}& \H^{1-n}(F/G \ten I) \ar[r]^-{\pd_e} &\pi_{n-1}(FA,y)\ar[r]^-{f_*}&\cdots\\ 
\cdots \ar[r]^-{f_*}&\pi_1(FB\by_{GB}GA,x) \ar[r]^-{o_e}& \H^0(F/G \ten I)  \ar[r]^-{-*y} &\pi_0(FA).
}
$$
\end{theorem}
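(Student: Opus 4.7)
\medskip
\noindent\textbf{Proof plan.} The plan is to realise the claimed long exact sequence as the Puppe sequence of a Kan fibration, with fibre identified algebraically and a single extra boundary map $o_e$ produced by a suspension argument. By Definition~\ref{scspqsdef}(Q2), quasi-smoothness of $\alpha$ applied to the surjection $A\to B$ in $s\C_{\L}$ gives that $\phi\co F(A)\to F(B)\by_{G(B)}G(A)$ is a Kan fibration in $\bS$, so the core task is to identify its fibre.

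The algebraic heart of the argument is the following: since $\m(A)\cdot I=0$ (and hence $I^{2}=0$), the formula $(a_1,a_2)\mapsto (a_1,(\bar a_1,a_2-a_1))$ defines a ring isomorphism
$$
A\by_B A \;\xrightarrow{\ \sim\ }\; A\by_k (k\oplus I),
$$
where $k\oplus I$ carries the trivial square-zero multiplication; a direct check using $\m(A)\cdot I=0$ verifies this is a ring map. Applying the left-exact functors $F$ and $G$ (Definition~\ref{spdef}) and using $F(k)=G(k)=\Delta^0$ gives natural isomorphisms $F(A)\by_{F(B)}F(A)\cong F(A)\by F(k\oplus I)$, and similarly for $G$. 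Under these identifications, the second projection $F(A)\by_{F(B)}F(A)\to F(A)$ becomes an action-like map $\mu\co F(A)\by F(k\oplus I)\to F(A)$ with $\mu(-,0)=\mathrm{id}$, so the fibre of $F(A)\to F(B)$ over $fy$ is $\mu^{-1}(y)\cong F(k\oplus I)$. Imposing the extra condition $\alpha y'=\alpha y$, which by naturality of $\alpha$ corresponds to the $G$-image landing at the augmentation basepoint $0\in G(k\oplus I)$, cuts this down to
$$
\phi^{-1}(fy,\alpha y)\;\cong\;F(k\oplus I)\by_{G(k\oplus I)}\{0\}.
$$
By the cohomology formula of \S\ref{cohomology}, the homotopy groups of this fibre are $\pi_m=\H^{-m}(F/G\ten I)$.

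The long exact sequence of homotopy for the Kan fibration $\phi$ now produces the middle portion of the claimed LES, from the top down through $-*y\co\H^0(F/G\ten I)\to\pi_0(FA)$; the group action of $\H^0(F/G\ten I)$ on $\pi_0(FA)$ with orbits equal to the fibres of $f_*$ is the standard action of $\pi_0$ of a fibre on $\pi_0$ of the total space in a fibre sequence. The remaining piece, and main obstacle, is constructing the obstruction map $o_e\co\pi_0(FB\by_{GB}GA)\to\H^1(F/G\ten I)$ with $o_e^{-1}(0)=\im(f_*)$. I would handle this by a delooping argument: the Dold--Kan shift $I[1]$ satisfies
$$
\pi_0\bigl(F(k\oplus I[1])\by_{G(k\oplus I[1])}\{0\}\bigr)\;=\;\H^1(F/G\ten I),
$$
which deloops the fibre of the second step. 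Applying the fibre-identification of Step~2 to the auxiliary small extension $A\oplus I[1]\to A$ (with square-zero kernel $I[1]$) produces another quasi-smooth fibration whose Puppe sequence splices with that of $\phi$ to yield the boundary map $o_e$; the exactness $o_e^{-1}(0)=\im(f_*)$ then follows from standard Puppe-sequence manipulations, with the main subtlety being to check independence of the delooping and compatibility with the fibre identification.
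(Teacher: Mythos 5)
The paper's own ``proof'' is just a citation to [ddt1], so I am judging your argument on its merits against the construction given there. The first two thirds of your proposal are correct and are essentially the standard route: the ring isomorphism $A\by_B A\cong A\by_k(k\oplus I)$ for a small extension (your verification using $\m(A)\cdot I=0$ is right), left-exactness and $F(k)=G(k)=\ast$ identify the fibre of $\phi$ over $x$ with $F(k\oplus I)\by_{G(k\oplus I)}\{0\}$, whose homotopy groups are $\H^{-m}(F/G\ten I)$; the long exact sequence of the Kan fibration $\phi$ (condition (Q2)) then gives everything from $\pi_n(FA,y)$ downwards, and your translation map $\mu$ supplies the $\H^0(F/G\ten I)$-action (note that ``$\pi_0$ of the fibre acting on $\pi_0$ of the total space'' is \emph{not} a standard feature of fibre sequences --- it exists here only because of $\mu$ --- and transitivity on the fibres of $f_*$ still needs a small homotopy-lifting argument to upgrade ``same component of $FB\by_{GB}GA$'' to ``equal image'').

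The genuine gap is the construction of $o_e$, which you correctly flag as the main obstacle but do not resolve. The point is that $o_e$ must be defined on \emph{all} of $\pi_0(FB\by_{GB}GA)$, including components that admit no lift to $F(A)$, where there is no fibre of $\phi$ to deloop; so no amount of splicing Puppe sequences of fibrations over $F(A)$ can produce it, and your auxiliary extension $A\oplus I[1]\to A$ has base built from $A$, hence its Puppe sequence receives no map from $\pi_0(FB\by_{GB}GA)$. The correct device is a strictification of the connecting map $B\to k\oplus I[1]$ of the extension: set $A':=A\oplus_I \cone(I)$ (the ring $A$ with the ideal $I$ enlarged to an acyclic module via $I\into\cone(I)$) and $B':=B\oplus I[1]$. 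Then $A'\onto B$ is an \emph{acyclic} small extension with kernel $\cone(I)$, $A'\onto B'$ is a small extension with kernel $I$, and $A=A'\by_{B'}B$. Condition (S1) --- which your argument never invokes, and which is precisely where quasi-smoothness beyond ``fibration'' enters --- makes $F(A')\to F(B)\by_{G(B)}G(A')$ a trivial fibration; one lifts $(x_B,y_A)$ through it, pushes forward to $F(B')\cong F(B)\by F(k\oplus I[1])$, lands in $F(k\oplus I[1])\by_{G(k\oplus I[1])}\{0\}$, and defines $o_e$ as the resulting class in $\pi_0=\H^1(F/G\ten I)$; the identity $A=A'\by_{B'}B$ together with left-exactness then gives $o_e^{-1}(0)=\im f_*$ and splices this step onto the long exact sequence you already have. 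Without this (or an equivalent cone construction) your proof establishes the theorem only over the image of $f_*$.
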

\begin{proof}
\cite{ddt1} Theorem \ref{ddt1-robs}.
\end{proof}

\begin{corollary}\label{weak} 
A map $\alpha:F \to G$ of quasi-smooth  $F,G\in sc\Sp$ is a weak equivalence if and only if 
the maps $\H^j(\alpha):\H^j(F) \to\H^j(G)$ are all isomorphisms.
\end{corollary}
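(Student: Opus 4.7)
The plan is to use Theorem~\ref{robs} as the main input, with the forward direction following from the defining property of $\H^*$, and the reverse requiring two steps: establish vanishing of the relative cohomology, then inductively propagate the weak equivalence along small extensions. For the forward direction, suppose $\alpha$ is a weak equivalence. Specialising the defining formula for $\H^*$ to $G=\bt$ yields $\H^{-m}(F\otimes V)\cong \pi_m F(k\oplus V)$ (and similarly for $G$), so the hypothesis that $\pi_i F(A)\to \pi_i G(A)$ is an isomorphism for every $A$, applied at $A=k\oplus V$ for suitable simplicial $k$-vector spaces $V$ (e.g.\ $V=k[j]$), gives $\H^j(\alpha)$ an isomorphism.

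For the reverse direction, first I would show $\H^m(F/G)=0$ for all $m$. Apply Theorem~\ref{robs} to the small extension $V\to k\oplus V \to k$ with $V=k$, noting that $F(k)=G(k)=\bt$ since left-exact functors preserve terminal objects. The fibre product $F(k)\times_{G(k)}G(k\oplus k)$ then reduces to $G(k\oplus k)$, and via the identifications $\pi_n G(k\oplus k)=\H^{-n}(G)$ and $\pi_{n-1}F(k\oplus k)=\H^{1-n}(F)$, the long exact sequence of Theorem~\ref{robs} becomes the standard relative cohomology sequence
\[
\cdots \to \H^{m-1}(G)\to \H^m(F/G)\to \H^m(F)\xra{\H^m(\alpha)}\H^m(G)\to \H^{m+1}(F/G)\to\cdots,
\]
and the hypothesis forces $\H^m(F/G)=0$ for all $m$.

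Next I would show by induction on $n$ with $\m(A)^n=0$ that $F(A)\to G(A)$ is a weak equivalence of simplicial sets for every $A\in s\C_{\L}$. The base case $A=k$ is immediate. For the step, factor $A\to A/\m(A)^{n-1}=:B$ as a small extension $I\to A\to B$ (which is automatic, since $\m(A)\cdot\m(A)^{n-1}=\m(A)^n=0$; Lemma~\ref{small} covers any residual surjections). Since $I$ is a simplicial $k$-vector space, $\H^*(F/G\otimes I)=\bigoplus_n \H^{*+n}(F/G)\otimes \pi_n(I)$ vanishes in every degree, so Theorem~\ref{robs} shows that $F(A)\to F(B)\times_{G(B)}G(A)$ is bijective on $\pi_0$ (the obstruction $o_e$ and the $\H^0$-action both being trivial) and an isomorphism on all higher $\pi_n$. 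The inductive hypothesis makes $F(B)\to G(B)$ a weak equivalence, and $G(A)\to G(B)$ is a fibration by quasi-smoothness of $G$, so right properness of $\bS$ implies that $F(B)\times_{G(B)}G(A)\to G(A)$ is also a weak equivalence. Composing yields the desired weak equivalence $F(A)\to G(A)$.

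The main subtlety is the $\pi_0$ portion of Theorem~\ref{robs}: it is phrased not as a group exact sequence but via an $\H^0$-action on $\pi_0(FA)$ whose orbits are the fibres of $f_*$. However, vanishing of $\H^0(F/G\otimes I)$ simultaneously annihilates the obstruction (giving surjectivity) and the action (giving injectivity), so bijectivity on $\pi_0$ follows cleanly and the induction goes through.
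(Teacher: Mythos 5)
Your overall strategy (identify $\H^*$ with homotopy groups of fibres over square-zero extensions for the forward direction; kill the relative cohomology and then induct along small extensions for the reverse) is sound, and the inductive step --- vanishing of $\H^*(F/G\ten I)$ forcing $F(A)\to F(B)\by_{G(B)}G(A)$ to be a $\pi_*$-isomorphism, followed by right properness of $\bS$ --- is correctly executed. The genuine gap is where you invoke Theorem \ref{robs} and the groups $\H^*(F/G)$ for the map $\alpha$ itself: both are only available when $\alpha$ is a quasi-smooth \emph{morphism}, whereas the corollary assumes only that $F$ and $G$ are quasi-smooth \emph{objects} (i.e.\ that $F\to\bt$ and $G\to\bt$ are quasi-smooth). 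A map between fibrant objects need not be a fibration, so the reverse direction does not apply as written. You can repair this either by factoring $\alpha$ as $F\xra{i}\hat F\xra{p}G$ with $i$ a trivial cofibration and $p$ quasi-smooth --- then $\hat F$ is quasi-smooth, $i$ induces isomorphisms on all $\pi_*(-(A))$ and hence (by your own forward direction) on $\H^*$, so it suffices to treat $p$ --- or, avoiding relative cohomology altogether, by comparing the two \emph{absolute} exact sequences of Theorem \ref{robs} for $F\to\bt$ and $G\to\bt$ over each small extension and applying the five lemma; the latter is presumably the argument the paper intends.

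A second, smaller point: to deduce $\H^m(F/G)=0$ for \emph{all} $m$ you cannot use only the extension $k\to k\oplus k\to k$; the exact sequence it produces only reaches degrees $m\le 0$ in the abelian range (the degree-$1$ spot sits in the non-abelian tail of Theorem \ref{robs}, which is not exact in the required sense). You need to run the same argument with $V=k[j]$ (normalisation $k$ concentrated in degree $j$) for all $j\ge 0$, using $\H^{i}(F/G\ten k[j])\cong\H^{i+j}(F/G)$ to push each desired degree into the genuinely exact part of the sequence. With these two repairs the argument goes through.
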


\begin{corollary}\label{cohosmoothchar}
If $\alpha:F \to G$ is  quasi-smooth in $sc\Sp$, then $\alpha$ is smooth if and only if $\H^i(F/G)=0$ for all $i>0$.
\end{corollary}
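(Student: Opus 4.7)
The plan is to combine Theorem~\ref{robs} with Lemma~\ref{small} and the decomposition
$$
\H^1(F/G\ten I) = \bigoplus_{n\ge 0} \H^{1+n}(F/G)\ten \pi_n I
$$
from \S\ref{cohomology}. By Lemma~\ref{small}, every surjection in $s\C_\L$ factors as a composition of small extensions, so smoothness of $\alpha$ reduces to checking, for each small extension $e\co I \to A \to B$, that the map $\theta\co FA \to FB\by_{GB}GA$ is a surjective fibration. Quasi-smoothness already delivers the fibration property, so the only extra content is $\pi_0$-surjectivity of $\theta$. By the exact sequence in Theorem~\ref{robs}, this is equivalent to the vanishing of the obstruction map
$o_e\co \pi_0(FB\by_{GB}GA) \to \H^1(F/G \ten I)$.

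For $(\Leftarrow)$, the vanishing $\H^i(F/G) = 0$ for $i > 0$ forces $\H^1(F/G\ten I) = 0$ for every simplicial $k$-vector space $I$ via the decomposition above, so $o_e \equiv 0$ trivially and $\alpha$ is smooth. For $(\Rightarrow)$, suppose $\alpha$ is smooth and fix $i \ge 1$. I would choose a simplicial $k$-vector space $V$ with $\pi_{i-1}V = k$ and $\pi_jV=0$ for $j\neq i-1$ (so the normalisation is finite-dimensional), and apply the obstruction exact sequence to the trivial square-zero extension $e\co V \to k\oplus V \to k$. Here $\H^1(F/G\ten V) = \H^i(F/G)$, and $o_e$ is identified with the connecting homomorphism $\H^{i-1}(G) \to \H^i(F/G)$ of the cohomological long exact sequence associated to $F \to G$; smoothness forces this connecting map to vanish.

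The hard part is to upgrade this to the vanishing of $\H^i(F/G)$ itself, rather than merely of one connecting map. My plan is to exploit that smoothness gives $o_e \equiv 0$ for \emph{every} small extension, not just the split ones over $B = k$: for any class $\xi \in \H^i(F/G)$, a miniversal/universal obstruction argument produces a small extension $e'$ and a class in $\pi_0(FB' \by_{GB'} GA')$ whose obstruction is $\xi$. This is the standard ``realisation of obstructions'' step in derived deformation theory, built on the characterisation of $\H^*(F/G)$ as the (co)homology of the homotopy fibre provided by \cite{ddt1-cohowelldfn}. Given such a realisation, $o_{e'} \equiv 0$ forces $\xi = 0$, so $\H^i(F/G) = 0$ for all $i > 0$; the main technical obstacle is therefore the explicit construction of this miniversal extension, which I would borrow from the framework of \cite{ddt1}.
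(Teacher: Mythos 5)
Your overall framework is the right one, and it is essentially what the paper intends (the result is stated as an unproved corollary of Theorem~\ref{robs}): combined with Lemma~\ref{small} and the standard closure of surjective fibrations under composition and base change, smoothness of a quasi-smooth $\alpha$ reduces to $\pi_0$-surjectivity of $\theta\co FA \to FB\by_{GB}GA$ for small extensions, and the ``if'' direction is then immediate from the decomposition $\H^1(F/G\ten I)=\bigoplus_{n\ge 0}\H^{1+n}(F/G)\ten\pi_n I$. That half of your argument is complete.

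The ``only if'' direction, however, has a genuine gap exactly where you flag it: exactness in Theorem~\ref{robs} says only that the fibre of $o_e$ over $0$ equals the image of $f_*$, so smoothness gives $o_e\equiv 0$ but says nothing about the target $\H^1(F/G\ten I)$ unless you show every class is realised as an obstruction. You defer this to an unconstructed ``miniversal extension'', which is both heavier than necessary and the one step that actually carries the content. It can be done in two lines, with no versality machinery: fix $i>0$, let $V$ be the simplicial vector space with $N V=k[i]$, so that $\H^i(F/G)\cong\pi_0\bigl(F(k\oplus V)\by_{G(k\oplus V)}\{0\}\bigr)$, and let $W\onto V$ be the surjection with $NW=(k\xra{\id}k)$ in degrees $i,i-1$, whose kernel $U$ has $NU=k[i-1]$. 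Then $U\to k\oplus W\to k\oplus V$ is a (non-acyclic) small extension. Given $\xi\in\H^i(F/G)$, the pair $(\xi,0)$ is a vertex of $F(k\oplus V)\by_{G(k\oplus V)}G(k\oplus W)$, so smoothness lifts it to a vertex $\tilde\xi$ of $F(k\oplus W)\by_{G(k\oplus W)}\{0\}$; since $\pi_0$ of the latter is $\H^0(F/G\ten W)=0$ ($W$ being acyclic), $\tilde\xi$ is connected to the basepoint, hence so is $\xi$, i.e.\ $\xi=0$. Note that this step genuinely uses smoothness on a non-acyclic small extension, as it must, and that your split extension $V\to k\oplus V\to k$ could never suffice since, as you observe, it only kills one connecting map.
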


\begin{proposition}\label{longexact}
Let $X,Y, Z:s\C_{\L}\to \bS$ be left-exact functors, with  $X \xra{\alpha} Y$ and $Y \xra{\beta} Z$ quasi-smooth.  There is then a long exact sequence
$$
\ldots \xra{\pd} \H^j(X/Y) \to \H^j(X/Z) \to \H^j(Y/Z) \xra{\pd} \H^{j+1}(X/Y) \to \H^{j+1}(X/Z) \to \ldots
$$
\end{proposition}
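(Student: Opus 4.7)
The plan is to realise $\H^{*}(X/Y)$, $\H^{*}(X/Z)$ and $\H^{*}(Y/Z)$ (after tensoring with a suitable simplicial $k$-vector space $V$) as homotopy groups of pointed simplicial sets fitting into a Kan fibration sequence, and then read off the desired long exact sequence from the corresponding long exact sequence of homotopy groups.

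Fix a simplicial $k$-vector space $V$ with finite-dimensional normalisation and set $A := k\oplus V \in s\C_{\L}$. Since $X,Y,Z$ are left-exact and $k$ is terminal in $s\C_{\L}$, we have $X(k)=Y(k)=Z(k)=\bt$; the section $k \to A$ therefore equips each of $X(A)$, $Y(A)$, $Z(A)$ with a canonical basepoint, which I denote $0$. Applying condition (Q2) to the surjection $A \onto k$ shows that both $X(A) \to Y(A)$ and $Y(A) \to Z(A)$ are Kan fibrations. I then define
$$F_V := X(A)\by_{Y(A)}\{0\},\qquad M_V := X(A)\by_{Z(A)}\{0\},\qquad B_V := Y(A)\by_{Z(A)}\{0\},$$
so that by the defining property of the cohomology groups recalled in \S\ref{cohomology}, $\pi_m F_V \cong \H^{-m}(X/Y \ten V)$, $\pi_m M_V \cong \H^{-m}(X/Z \ten V)$, and $\pi_m B_V \cong \H^{-m}(Y/Z \ten V)$.

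The key observation is that $M_V \cong X(A)\by_{Y(A)} B_V$: a point of $M_V$ is $x \in X(A)$ with $\beta\alpha(x)=0$, and this is equivalent to requiring $\alpha(x) \in B_V$. Since $X(A)\to Y(A)$ is a fibration, the base-change $M_V \to B_V$ is a fibration as well, with fibre over the basepoint $0 \in B_V$ equal to $\{x \in X(A) : \alpha(x)=0\}=F_V$. Thus $F_V \to M_V \to B_V$ is a pointed fibration sequence, and its long exact sequence of homotopy groups translates, via the identifications above, into the claimed long exact sequence in cohomology after tensoring with $V$.

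To obtain every $j \in \Z$, I vary $V$: choosing $V$ with $\pi_{n}V = k$ concentrated in a single degree $n$, one has $\H^i(F/G \ten V)=\H^{i+n}(F/G)$, so $\pi_m$ of the fibres computes $\H^{n-m}(F/G)$, which accesses every cohomological degree as $n$ grows. Naturality of the construction $V \mapsto (F_V \to M_V \to B_V)$ in $V$ shows that the connecting maps obtained for different $n$ are compatible, assembling into a single long exact sequence of the $\H^j(-/-)$. The only real work is the pullback identification $M_V \cong X(A)\by_{Y(A)}B_V$ that produces the fibration $M_V \to B_V$; everything else is standard bookkeeping with homotopy long exact sequences and the cohomology-from-fibre identifications already established in \cite{ddt1}.
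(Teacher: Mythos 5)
Your proof is correct and follows essentially the same route as the argument the paper delegates to \cite{ddt1}: realise the map $X(k\oplus V)\by_{Z(k\oplus V)}\{0\}\to Y(k\oplus V)\by_{Z(k\oplus V)}\{0\}$ as a base change of the fibration $X(k\oplus V)\to Y(k\oplus V)$ with fibre $X(k\oplus V)\by_{Y(k\oplus V)}\{0\}$, and read off the long exact sequence of homotopy groups via the identifications of \S\ref{cohomology}. Your closing remark about varying $V$ so as to land in the range of abelian homotopy groups, and checking compatibility of the connecting maps under that variation, is exactly the right (and only) remaining bookkeeping point.
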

\begin{proof}
\cite{ddt1} Proposition \ref{ddt1-longexact}.
\end{proof}

\subsection{Model structures}\label{model}

\begin{theorem}\label{scspmodel}
There is a simplicial model structure on $sc\Sp$, for which the   fibrations are quasi-smooth morphisms, and weak equivalences between quasi-smooth objects are those given in Definition \ref{weakdef}.
\end{theorem}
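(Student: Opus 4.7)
The plan is to verify Quillen's model category axioms MC1--MC5 together with the simplicial enrichment axiom SM7. Declare fibrations to be the quasi-smooth morphisms of Definition \ref{scspqsdef}. Extending Definition \ref{weakdef} to arbitrary morphisms, call $\alpha\colon F \to G$ a weak equivalence if, after functorially replacing $F$ and $G$ by quasi-smooth objects $\tilde{F}, \tilde{G} \in sc\Sp$, the induced maps $\pi_i \tilde{F}(A) \to \pi_i \tilde{G}(A)$ are isomorphisms for every $A \in s\C_{\L}$ and every $i \ge 0$; by Corollary \ref{weak}, this amounts to isomorphism on the cohomology groups $\H^j$. Cofibrations are defined by the left lifting property against trivial (quasi-smooth and weak equivalence) fibrations. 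The simplicial enrichment is imported pointwise via $(F^K)(A) := F(A)^K$ for $K \in \bS$, which preserves left-exactness since exponentials in $\bS$ commute with finite limits in the base.

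The first substantive step is to identify generating (trivial) cofibrations. For each small extension $A \onto B$ in $s\C_{\L}$, let $h_A, h_B \in sc\Sp$ denote the representable functors $h_A(C) := \Hom_{s\C_{\L}}(A, C)$; the surjection induces an inclusion $h_B \into h_A$. Pushout--product with the boundary inclusions $\partial\Delta^n \into \Delta^n$ in $\bS$ yields a set $I$ of candidate generating cofibrations, and restricting to acyclic small extensions yields a set $J$ of candidate generating trivial cofibrations. By Lemma \ref{small} and direct Yoneda translation, a morphism in $sc\Sp$ has the right lifting property against $I$ iff it is smooth, and against $J$ iff it is quasi-smooth, matching Definition \ref{scspqsdef}.

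Quillen's small object argument then produces the factorization axiom MC5. The lifting axiom MC4 follows formally from the definition of cofibrations. For the 2-out-of-3 axiom MC2, one reduces via fibrant replacement to composable pairs of morphisms between quasi-smooth objects, where weak equivalences are detected by $\H^j$; the long exact sequence of Proposition \ref{longexact} together with Theorem \ref{robs} then finishes this case. Axiom SM7 reduces, via Lemma \ref{sm7}, to exactly the pushout--product compatibility already built into the notion of quasi-smoothness.

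The main obstacle is running the small object argument cleanly within $sc\Sp$. Each representable $h_A$ must be small relative to the transfinite compositions used; this follows from $A$ being Artinian, so that $\Hom(A, -)$ controls the relevant colimits. More delicate is the preservation of left-exactness, since pushouts of left-exact functors in the ambient category $\bS^{s\C_{\L}}$ need not themselves be left-exact. One resolves this by performing the small object argument in $\bS^{s\C_{\L}}$ and then left-exactly reflecting at each stage, using that $sc\Sp \subset \bS^{s\C_{\L}}$ is reflective and that the generating cofibrations, being extracted from representables and finite cofibrations in $\bS$, map to themselves under the reflection.
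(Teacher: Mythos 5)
The paper itself gives no argument here: it defers the entire proof to \cite{ddt1}, so your construction has to stand on its own, and as written it does not. The concrete failure is in the Yoneda translation of the lifting properties. A map of simplicial sets has the right lifting property against all boundary inclusions $\partial\Delta^n \into \Delta^n$ precisely when it is a \emph{trivial} fibration, and against all horn inclusions $\Lambda^n_i \into \Delta^n$ precisely when it is a fibration. Hence a morphism $F \to G$ has RLP against your set $I$ iff $F(A) \to F(B)\by_{G(B)}G(A)$ is a trivial fibration for every small extension $A \onto B$ --- that is the class of trivial fibrations of the intended model structure, not the class of smooth maps (Definition \ref{scspqsdef} only demands a surjective fibration there, and smooth objects with $\H^{<0}\neq 0$ are smooth without being trivially smooth). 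More damagingly, RLP against your $J$ only recovers condition (S1): nothing in $J$ imposes (Q2). The paper treats (S1)-for-acyclic-extensions and fibrancy-for-all-extensions as genuinely independent conditions (see Proposition \ref{ctodtnew}), so $J$-injective maps need not be quasi-smooth. To have any chance of $J\text{-inj} = \{\text{quasi-smooth maps}\}$ you must enlarge $J$ to also contain the pushout--products of $h_B \into h_A$, for \emph{arbitrary} small extensions, with the horn inclusions $\Lambda^n_i \into \Delta^n$. As stated, the small object argument would manufacture factorizations whose second halves are not the fibrations of the theorem.

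Even after correcting $I$ and $J$, the argument is missing its hardest step. The recognition theorem for cofibrantly generated model categories requires you to prove that relative $J$-cell complexes (formed by transfinite pushouts \emph{after} the left-exact reflection, which you cannot assume acts trivially on the attaching pushouts just because it fixes the generating maps themselves) are weak equivalences; you say nothing about this. Your definition of weak equivalence is also circular at this stage: it invokes a functorial quasi-smooth (fibrant) replacement, which is exactly the factorization MC5 you are in the middle of constructing, and you would further need to know that the replacement map is itself a weak equivalence in a sense already defined. Breaking this circularity --- typically by first defining weak equivalences intrinsically, e.g.\ via the cohomology groups of \S\ref{cohomology} and Theorem \ref{robs}, and only then building the factorizations --- is where the substance of \cite{ddt1}'s proof lies, and it is absent here.
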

\begin{proof}
This is \cite{ddt1} Theorem \ref{ddt1-scspmodel}.
\end{proof}

Thus the homotopy category $\Ho(sc\Sp)$ is equivalent to the category of quasi-smooth objects in $sc\Sp$, localised at the weak equivalences of Definition \ref{weakdef}.  

\begin{definition}\label{generalcoho}
Given any morphism $f:X \to Z$, we define $\H^n(X/Z):= \H^n(\hat{X}/Z)$, for $X \xra{i} \hat{X} \xra{p} Z$ a factorisation of $f$ with $i$ a geometric trivial cofibration, and $p$ a geometric fibration. 
\end{definition}

\subsubsection{Homotopy representability}

\begin{definition}\label{schless}
Define the category $\cS$ to consist of functors $F: s\C_{\L}\to \bS$ satisfying the following conditions:
\begin{enumerate}

\item[(A0)] $F(k)$ is contractible.

\item[(A1)] For all small extensions $A \onto B$ in $s\C_{\L}$, and maps $C \to B$ in $s\C_{\L}$, the 
map 
$F(A\by_BC) \to F(A)\by_{F(B)}^hF(C)$ is a  
weak equivalence, where $\by^h$ denotes homotopy fibre product.

\item[(A2)] For all acyclic small extensions $A \onto B$ in $s\C_{\L}$, the map $F(A) \to F(B)$ is a weak equivalence.
\end{enumerate}

Say that a natural transformation $\eta:F \to G$ between such functors is a weak equivalence if the maps $F(A) \to G(A)$ are weak equivalences for all $A\in s\C_{\L}$, and let $\Ho(\cS)$ be the category obtained by formally inverting all weak equivalences in $\cS$.
\end{definition}

\begin{remark}\label{byh}
We may apply the long exact sequence of homotopy  to describe the homotopy groups of homotopy fibre products. 
If $f:X \to Z$, $g:Y \to Z$ in $\bS$ and $P=X\by_Z^hY$,   the map $\theta:\pi_0(P)\to \pi_0(X)\by_{\pi_0(Z)}\pi_0(Y)$ is surjective. Moreover, $\pi_1(Z,*)$ acts transitively on the fibres of $\theta$ over $* \in \pi_0Z$.

Take $v \in \pi_0(P)$ over $*$. Then 
there is
a connecting homomorphism $\pd:\pi_n(Z,*)\to \pi_{n-1}(P,v)$ for all $n\ge 1$, 
giving a long exact sequence
$$
\ldots \xra{\pd} \pi_n(P,v)\to \pi_n(X,v)\by \pi_n(Y,v)\xra{f\cdot g^{-1}} \pi_n(Z,*)\xra{\pd}\pi_{n-1}(P,v)  \ldots .
$$
\end{remark}

\begin{theorem}\label{schrep}
There is a canonical equivalence between the geometric homotopy category $\Ho(sc\Sp)$ and the category $\Ho(\cS)$.
\end{theorem}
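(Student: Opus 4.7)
The plan is to establish the equivalence by showing that the subcategory of quasi-smooth (i.e., fibrant) objects of $sc\Sp$ rectifies the homotopy-theoretic objects of $\cS$. I would proceed in three steps: (i) construct a functor $\Phi\co \Ho(sc\Sp) \to \Ho(\cS)$ by verifying that quasi-smooth objects of $sc\Sp$ automatically lie in $\cS$; (ii) construct a candidate inverse by rectifying every $F \in \cS$ to a strictly left-exact, quasi-smooth functor; and (iii) verify full faithfulness.

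For step (i), let $F$ be a quasi-smooth object of $sc\Sp$. Axiom (A0) is immediate from left-exactness: since $k$ is terminal in $s\C_\L$, $F(k)$ is the one-point simplicial set, hence contractible. Axiom (A2) is precisely condition (S1) applied with $G=\bt$. For axiom (A1), left-exactness of $F$ yields the strict equality $F(A\by_B C) = F(A)\by_{F(B)}F(C)$, while condition (Q2) makes $F(A) \to F(B)$ a fibration whenever $A \onto B$ is surjective, so the strict pullback computes the homotopy pullback. Combined with the fibrant-replacement functor supplied by Theorem \ref{scspmodel} and the cohomological characterisation of weak equivalences in Corollary \ref{weak}, this defines a homotopy-invariant functor $\Phi$.

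For step (ii), given $F \in \cS$, I would build a weakly equivalent quasi-smooth left-exact $\tilde F$ by a homotopy-limit rectification: roughly, set $\tilde F(A) = \holim F(A_{\bt})$ for an appropriate cosimplicial resolution $A_\bt$ of $A$ by small extensions, exploiting Lemma \ref{small} to guarantee enough such resolutions. Inducting on the minimal $n$ with $\m(A)^n=0$, axiom (A1) should yield $F(A)\simeq \tilde F(A)$, the homotopy-limit structure should force strict left-exactness (since finite limits commute), and conditions (S1), (Q2) for $\tilde F$ should be derived from (A2) and (A1) respectively for $F$. Step (iii) is then formal: between quasi-smooth left-exact objects, morphisms in either homotopy category are represented by honest simplicial homotopy classes of natural transformations, and the weak equivalences agree via Corollary \ref{weak} applied to the intrinsic cohomology groups $\H^*(F/G)$. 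The principal obstacle is the rectification in step (ii): the homotopy-limit diagram must be engineered so that strict left-exactness, quasi-smoothness, and the weak equivalence $F \simeq \tilde F$ all hold simultaneously, which requires a delicate choice of the indexing category of resolutions.
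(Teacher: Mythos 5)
Step (i) of your proposal is correct, and it is the easy half of the theorem: strict left-exactness gives (A0), condition (S1) with $G=\bt$ gives (A2), and (A1) reduces to the observation that a strict pullback along a fibration in $\bS$ is a homotopy pullback. The entire content of the theorem, however, sits in your step (ii), and there the mechanism you propose does not work. A ``cosimplicial resolution of $A$ by small extensions'' is not a meaningful device here: by Lemma \ref{small}, small extensions are successive \emph{quotients} $A\onto A/J$, so any diagram of them receiving a map from $A$ has $A$ as an initial vertex, and $\holim F(A_{\bt})$ over such a diagram simply returns $F(A)$ (up to weak equivalence) with no improvement in its exactness properties. Worse, the assertion that ``the homotopy-limit structure should force strict left-exactness (since finite limits commute)'' is false: a homotopy limit is only well defined up to weak equivalence, and no point-set model of it (e.g.\ $\Tot$ of a cosimplicial replacement) converts the homotopy-pullback condition (A1) into preservation of \emph{strict} fibre products --- which is what membership of $sc\Sp$ requires --- unless $F$ already preserved them. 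A working rectification (this is what the proof deferred to \cite{ddt1} must supply) has to be built the other way around: extract the tangent and cohomology data of $F$ from its values $F(k\oplus V)$, construct a quasi-smooth left-exact functor with matching data by an obstruction-theoretic induction over small extensions (every $A\in s\C_{\L}$ being an iterated small extension of $k$), and prove the comparison map is a levelwise weak equivalence using (A1), (A2) and the long exact sequences of Theorem \ref{robs}. None of this is present in your sketch.

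Your step (iii) is also not formal. The category $\cS$ carries no model structure; $\Ho(\cS)$ is a bare localisation, so its hom-sets are a priori equivalence classes of zig-zags passing through functors that need not be left-exact. Identifying these with homotopy classes of maps in $sc\Sp$ requires either that the rectification of step (ii) be functorial enough to give a calculus-of-fractions argument, or a separate argument replacing arbitrary zig-zags by ones through quasi-smooth left-exact objects. Corollary \ref{weak} only tells you that the two notions of weak equivalence agree on quasi-smooth objects; it does not by itself compute morphisms in $\Ho(\cS)$. So both the essential surjectivity and the full faithfulness of your comparison functor rest on a rectification that, as proposed, would fail.
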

\begin{proof}
This is \cite{ddt1} Theorem \ref{ddt1-schrep}.
\end{proof}

\subsubsection{Equivalent formulations}

If $k$ is a field of characteristic $0$, then we may work with dg algebras rather than simplicial algebras.

\begin{definition}
Define $dg\C_{\L}$ to be the category of Artinian local differential $\N_0$-graded  graded-commutative  $\L$-algebras with residue field $k$.
\end{definition}

\begin{definition}
Define a map $A \to B$ in $dg\C_{\L}$ to be a small extension if it is surjective and the kernel $I$ satisfies $I\cdot \m(A)=0$. 
\end{definition}

\begin{definition}\label{sdgspdef}
Define $sDG\Sp$ to be the category of left exact functors from $dg\C_{\L}$ to $\bS$.
\end{definition}

\begin{definition}
Say a map $X \to Y$ in $sDG\Sp$ is quasi-smooth if for all small extensions $f:A \to B$ in $dg\C_{\L}$, the morphism
$$
X(A)\to Y(A)\by_{Y(B)}X(B)
$$ 
is a fibration in $\bS$, which is moreover a  trivial fibration if $f$ is acyclic. 
\end{definition}

\begin{definition}\label{weakdef2}
We will say that a morphism $\alpha: F \to G$ of quasi-smooth objects of $sDG\Sp$ is a weak equivalence if, for all $A \in s\C_{\L}$, the maps $\pi_i F(A) \to \pi_iG(A)$ are isomorphisms for all $i$.
\end{definition}

\begin{proposition}\label{sdgmodel}
There is a model structure on $sDG\Sp$, for which the   fibrations are quasi-smooth morphisms, and weak equivalences between quasi-smooth objects are those given in Definition \ref{weakdef2}.
 \end{proposition}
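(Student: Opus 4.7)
The plan is to transfer the simplicial model structure of Theorem \ref{scspmodel} along an equivalence relating $s\C_\L$ and $dg\C_\L$, available in characteristic $0$. Specifically, the normalization functor $N$ equipped with the Eilenberg--Zilber (shuffle) product, together with the denormalization functor, provides a Quillen equivalence between simplicial commutative $\L$-algebras and commutative dg $\L$-algebras; this restricts via Lemma \ref{cotdef} to an equivalence between the Artinian local subcategories $s\C_\L$ and $dg\C_\L$, since the cotangent complex, nilpotence of the maximal ideal, and residue field are all preserved under $N$ and its inverse.

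First I would check that small extensions, and acyclic small extensions, in $dg\C_\L$ correspond under this equivalence to small extensions and acyclic small extensions in $s\C_\L$. This reduces to observing that $N$ identifies the notions of surjection, kernel, the condition $\m(A)\cdot I=0$, and acyclicity (since $\pi_i \cong \H_i$ on the underlying chain complexes, by the remarks following Definition \ref{spdef}). In particular the analogue of Lemma \ref{small} holds in $dg\C_\L$, so every surjection factors as a composition of small extensions and every acyclic surjection as a composition of acyclic small extensions.

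Next, precomposition with the equivalence produces an equivalence of left-exact functor categories $sDG\Sp \simeq sc\Sp$. Under this equivalence, a morphism satisfies the quasi-smoothness condition imposed above for $sDG\Sp$ if and only if the corresponding morphism in $sc\Sp$ is quasi-smooth in the sense of Definition \ref{scspqsdef}, since quasi-smoothness is defined by lifting against (acyclic) small extensions and these match up. Likewise weak equivalences in the sense of Definition \ref{weakdef2} correspond to weak equivalences in the sense of Definition \ref{weakdef}. Transport of structure along this equivalence then endows $sDG\Sp$ with the required model structure, and the identification shows that fibrations are precisely the quasi-smooth morphisms.

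The main obstacle I expect is the monoidal subtlety: normalization is only lax symmetric monoidal via Eilenberg--Zilber, so a priori one obtains a Quillen equivalence rather than a strict equivalence of categories between simplicial and dg commutative algebras. However, what the transfer of the model structure actually requires is only a correspondence that identifies small extensions, acyclic small extensions, and the associated lifting conditions; in characteristic $0$ the shuffle/denormalization adjunction supplies exactly this, so the remaining verification is formal. As a safety net, one could alternatively reprove the result directly, applying verbatim the argument establishing Theorem \ref{scspmodel} in \cite{ddt1}, since that argument uses only formal properties of small extensions and lifting in $s\C_\L$ which have exact analogues in $dg\C_\L$.
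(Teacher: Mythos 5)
There is a genuine gap at the foundation of your main route: $N\co s\C_{\L} \to dg\C_{\L}$ is \emph{not} an equivalence of categories, even in characteristic $0$. Dold--Kan is an equivalence on underlying modules, but on commutative algebras the Eilenberg--Zilber shuffle product makes $N$ only lax symmetric monoidal, and $N$ fails to be full: a map of cdgas $NA \to NB$ is a chain map compatible with shuffle products, which need not arise from a simplicial algebra map $A \to B$ (compatibility with the levelwise products is a strictly stronger condition). What one actually has is a Quillen equivalence of model categories --- and that is exactly the content of Theorem \ref{nequiv}, which logically comes \emph{after} Proposition \ref{sdgmodel}: the assertion that $\Spf N^*$ is a right Quillen equivalence presupposes that the model structure on $sDG\Sp$ already exists. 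So there is no equivalence $sDG\Sp \simeq sc\Sp$ of underlying categories along which to ``transport'' the structure of Theorem \ref{scspmodel}; the acknowledged ``monoidal subtlety'' is not a technicality to be waved away but the reason the strategy fails. (The paper's remark that there is no straightforward analogue of Definition \ref{underline} in the dg setting is another symptom of the two settings not being interchangeable at the categorical level.)

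Your ``safety net'' is in fact the actual proof: one reruns the construction of the model structure of Theorem \ref{scspmodel} directly in the dg setting, using the dg notions of small and acyclic small extensions, the dg analogue of the factorisation Lemma \ref{small}, and the same lifting/small-object arguments; this is what the cited reference does. Your verifications that $N$ matches up surjections, kernels, the condition $\m(A)\cdot I = 0$, and acyclicity are correct and useful observations, but they support the direct argument, not the transport argument. As written, the proposal's primary line of reasoning rests on a false premise, and the correct approach appears only as an afterthought that is not carried out.
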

\begin{proof}
This is \cite{ddt1} Proposition \ref{ddt1-sdgmodel}.
\end{proof}

Most of the constructions from $s\C_{\L}$ carry over to $dg\C_{\L}$. However, there is no straightforward analogue of Definition \ref{underline}.

\begin{definition}\label{qrat}
Define the normalisation functor $N: s\C_{\L} \to dg\C_{\L}$ by mapping $A$ to its associated normalised complex $NA$, equipped with the Eilenberg-Zilber shuffle product (as in \cite{QRat}).
\end{definition}

\begin{definition}
Define $\Spf N^*:  sDG\Sp \to sc\Sp$ by mapping $X: dg\C_{\L} \to \bS$ to the composition $X\circ N:s\C_{\L} \to \bS$. Note that this is well-defined, since $N$ is left exact.
\end{definition}

\begin{theorem}\label{nequiv}
$\Spf N^*:  sDG\Sp \to sc\Sp$ is a right Quillen equivalence. 
\end{theorem}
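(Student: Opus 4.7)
The plan is to establish the two properties of a Quillen equivalence: first, that $\Spf N^*$ is right Quillen, and second, that the induced functor on homotopy categories is an equivalence. Both rest on how $N: s\C_\L \to dg\C_\L$ interacts with the classes of (acyclic) small extensions that define quasi-smoothness in $sc\Sp$ and $sDG\Sp$.

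For the right Quillen property, I would check that $N$ sends (acyclic) small extensions to (acyclic) small extensions. Given a small extension $I \to A \to B$ in $s\C_\L$ with $\m(A) \cdot I = 0$, the sequence $NI \to NA \to NB$ is short exact, since $N$ is exact on underlying modules. The vanishing $\m(NA) \cdot NI = 0$ in $dg\C_\L$ follows because the Eilenberg-Zilber shuffle product expresses any such product as a signed sum of elementwise products each landing in $\m(A) \cdot I$. Acyclicity is preserved via the Dold-Kan identification $\pi_* A \cong \H_* NA$. Given this, for a quasi-smooth map $X \to Y$ in $sDG\Sp$ and any (acyclic) small extension $A \to B$ in $s\C_\L$, the map $(X \circ N)(A) \to (Y \circ N)(A) \by_{(Y \circ N)(B)} (X \circ N)(B)$ is just the original map tested against the (acyclic) small extension $NA \to NB$, hence a (trivial) fibration. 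So $\Spf N^*$ is right Quillen.

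For the equivalence of homotopy categories, the cleanest route is via the representability theorem. Let $\cS_{dg}$ denote the category of functors $F: dg\C_\L \to \bS$ satisfying the evident dg analogues of (A0)--(A2), where small extensions in $s\C_\L$ are replaced by small extensions in $dg\C_\L$. The proof of Theorem \ref{schrep} adapts verbatim to give $\Ho(sDG\Sp) \simeq \Ho(\cS_{dg})$, and since $N$ is left exact, precomposition with $N$ lifts $\cS_{dg}$ into $\cS$. The theorem then reduces to showing that $N^* : \Ho(\cS_{dg}) \to \Ho(\cS)$ is an equivalence.

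The main obstacle is essential surjectivity. This is where the characteristic zero hypothesis enters, through Quillen's equivalence between simplicial commutative $k$-algebras and connective cdga's, with quasi-inverse the denormalisation functor $D$ (symmetric algebra on the denormalised chain complex, with the appropriate shuffle-compatible relations). For $R \in dg\C_\L$, there is a zigzag of weak equivalences in $dg\C_\L$ connecting $R$ to $N(DR)$, so given $F \in \cS$ one may define $G(R) := F(DR)$ (after suitable fibrant replacement); axioms (A1) and (A2) for $G$ transfer from those of $F$ along these weak equivalences. Full faithfulness of $N^*$ then follows from (A2), since every weak equivalence class in $dg\C_\L$ is hit by $N$ up to small extensions. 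The delicate point is verifying that $D$ preserves Artinianness, which follows from Lemma \ref{cotdef}: both $N$ and $D$ respect the cotangent-space characterisation of Artinian objects, so the Quillen equivalence restricts from all simplicial/dg commutative algebras to the Artinian subcategories $s\C_\L$ and $dg\C_\L$.
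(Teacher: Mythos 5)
First, note that the paper does not actually prove this theorem here: the proof is a citation to \cite{ddt1}, so the comparison below is with what a complete argument must contain rather than with a proof printed in this paper.

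The first half of your argument (the right Quillen property) is essentially correct and is the standard one: $N$ is exact on underlying modules, $\m(NA)=N\m(A)$, and the shuffle product of $x\in N\m(A)$ with $y\in NI$ is a signed sum of terms $(s_\nu x)(s_\mu y)\in \m(A)\cdot I=0$, so small extensions map to small extensions; acyclicity passes through the identification $\pi_*A\cong \H_*(NA)$; and the reduction of the quasi-smoothness conditions from arbitrary (acyclic) surjections to (acyclic) small extensions is Lemma \ref{small}. One omission: ``right Quillen'' also requires a left adjoint to $\Spf N^*$ (a left Kan extension along $N$, which must be checked to exist within the category of left-exact functors); you never address this.

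The second half has genuine gaps. (i) The denormalisation $D$ you invoke is misdescribed: it is not a symmetric algebra on the denormalised complex (that would change the underlying module), but the Eilenberg--Zilber denormalisation of the multiplication itself, as in the formula for $\llbracket \pd^Iv,\pd^Jw\rrbracket$ in Appendix \ref{ddtconsistentsdc}; one must then verify that this yields a commutative simplicial algebra and that it preserves Artinian local objects, small extensions and the fibre products appearing in (A1). (ii) Even granting $D$, the underlying complex of $N(DR)$ is $R$ but its shuffle product is not the original product; exhibiting a zigzag of weak equivalences between $R$ and $N(DR)$ \emph{inside} $dg\C_{\L}$ --- of a form to which (A2) applies after factoring into acyclic small extensions --- is precisely the hard step, and you assert it rather than prove it. (iii) Full faithfulness of $N^*$ on $\Ho(\cS_{dg})$ is dispatched in one sentence that is not an argument, and the route through Theorem \ref{schrep} silently assumes its unproved dg analogue. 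So your plan correctly isolates where characteristic $0$ enters and gets the fibration-preservation half right, but the equivalence half is an outline with its essential verifications missing.
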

\begin{proof}
This is \cite{ddt1} Theorem \ref{ddt1-nequiv}.
\end{proof}

In particular, this means that $\Spf N^*$ maps quasi-smooth morphisms to quasi-smooth morphisms, and induces an equivalence $ \oR\Spf N^*: \Ho( sDG\Sp) \to \Ho(sc\Sp)$.

Now assume that $\L=k$.
\begin{theorem}\label{allequiv}
$\Ho(sDG\Sp)$ is equivalent to the category of $L_{\infty}$-algebras localised at tangent quasi-isomorphisms (as considered in \cite{Kon}). This is also equivalent to the category of DGLAs (see \S \ref{dgla}) localised at quasi-isomorphisms.
\end{theorem}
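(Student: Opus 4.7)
The plan begins by noting that the equivalence between DGLAs and $L_\infty$-algebras, each localised at (tangent) quasi-isomorphisms, is classical: the bar--cobar adjunction of Hinich rectifies any $L_\infty$-algebra to a quasi-isomorphic DGLA, and DGLAs sit as a full subcategory of $L_\infty$-algebras. So the substantive task is to produce an equivalence $\Ho(sDG\Sp)\simeq \Ho(\mathrm{DGLA})$.

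To that end, I would consider the Hinich--Getzler Maurer--Cartan functor $L\mapsto \mcl$ from DGLAs to $sDG\Sp$ defined by
$$
\mcl(A)_n = \mc\bigl(L\otimes \m(A)\otimes \Omega^\bullet(\Delta^n)\bigr),
$$
where $\Omega^\bullet(\Delta^\bullet)$ is Sullivan's simplicial cdga of polynomial de Rham forms. Left-exactness is automatic since $\mc$ is cut out by equations. The first technical step is to verify quasi-smoothness: given a small extension $I\hookrightarrow A\twoheadrightarrow B$ in $dg\C_\L$, the condition $I\cdot\m(A)=0$ linearises the Maurer--Cartan equation on the kernel, and translating by any lift of a chosen element in $\mcl(B)$ reduces the fibration condition for $\mcl(A)\to \mcl(B)$ to showing that
$$
\mc\bigl(L\otimes I\otimes \Omega^\bullet(\Delta^n)\bigr)\to \mc\bigl(L\otimes I\otimes \Omega^\bullet(\partial\Delta^n)\bigr)
$$
is a Kan fibration, and trivial when $I$ is acyclic; under Dold--Kan this is the standard surjectivity of Sullivan forms onto the simplex boundary.

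Next I would compute cohomology: for $A = k\oplus V$ with $V^2=0$, Dold--Kan and Sullivan's simplicial de Rham theorem give $\pi_n\mcl(k\oplus V)\cong \H^{1-n}(L\otimes V)$, so the analogue of the $\H^j$ of \S\ref{cohomology} for $sDG\Sp$ recovers $\H^j(\mcl)\cong \H^j(L)$ up to the standard degree shift. By Corollary \ref{weak} this makes $L\mapsto \mcl$ send DGLA quasi-isomorphisms to weak equivalences and reflect them; combining this with the functoriality of the obstruction sequence in Theorem \ref{robs} upgrades the statement to full faithfulness at the level of homotopy categories.

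The main obstacle is essential surjectivity. Given a quasi-smooth $F\in sDG\Sp$, one must construct a DGLA $L$ with $\mcl\simeq F$. The idea is to extract an $L_\infty$-structure from the internal cohomology of $F$: the graded space $T_F:=\bigoplus_j \H^j(F)$, suitably shifted, acquires higher brackets read off from the obstruction maps $o_e$ of Theorem \ref{robs} applied to square-zero extensions coming from the filtration of $A$ by powers of $\m(A)$; rectifying via bar--cobar yields a DGLA $L$, and a comparison $\mcl\to F$ is built inductively up the tower of small extensions in $dg\C_\L$, noting at each stage that the obstruction to extension lies in a cohomology group which matches on the two sides by construction. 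This representability step is the heart of the theorem, and is where I would invoke the detailed machinery of \cite{ddt1}; essentially all of the real labour of the equivalence is concentrated here.
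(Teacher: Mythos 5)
There is a genuine gap, in fact two. First, your claimed full faithfulness does not follow from what you establish. Showing that $L\mapsto \mcl$ sends quasi-isomorphisms to weak equivalences and reflects them (via the cohomology computation and Corollary \ref{weak}) only controls the functor on objects up to isomorphism in the homotopy categories; it says nothing about the induced maps on hom-sets $\Ho(\mathrm{DGLA})(L,L')\to\Ho(sDG\Sp)(\mcl[L],\mcl[L'])$. ``Functoriality of the obstruction sequence in Theorem \ref{robs}'' cannot supply this: the obstruction sequence computes homotopy groups of the values $F(A)$, not derived mapping spaces between functors. To get full faithfulness one needs either a Quillen adjunction whose derived unit and counit are equivalences, or a direct computation of derived hom-spaces, and neither is sketched. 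This is exactly why the paper routes the statement through \cite{ddt1} Corollary \ref{ddt1-mcallequiv}, which establishes a Quillen equivalence between a model category of $\Z$-graded DGLAs (fibrations the surjections, weak equivalences the quasi-isomorphisms) and $sDG\Sp$ --- note also that the relevant DGLAs are $\Z$-graded, a point your sketch does not engage with.

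Second, essential surjectivity --- which you correctly identify as the heart of the matter --- is not proved but deferred, and the heuristic you offer (reading higher brackets on $\bigoplus_j\H^j(F)$ off the obstruction maps $o_e$, then rectifying) is not a viable substitute. The $o_e$ are maps of sets defined one small extension at a time; assembling them into a coherent $L_\infty$-structure with all higher homotopies, and then producing a comparison map $\mcl\to F$ that one can check is a weak equivalence, is precisely the labour that the cited machinery performs by constructing an explicit adjoint (a tangent-Lie-algebra-type left adjoint to $L\mapsto \mc(\Tot(L\otimes\m(A)))$), not by a minimal-model transfer. As written, your argument establishes at most that $L\mapsto\mcl$ lands in quasi-smooth objects and computes cohomology correctly; the equivalence of homotopy categories is asserted rather than derived. (Your first paragraph, reducing the $L_\infty$ statement to the DGLA statement by rectification, is fine in outline, though one should still check that ``tangent quasi-isomorphism'' in Kontsevich's sense matches quasi-isomorphism under rectification --- this is the content of the other citation, \cite{ddt1} Proposition \ref{ddt1-tqiscor}.)
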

\begin{proof}
Combine \cite{ddt1} Proposition \ref{ddt1-tqiscor} and Corollary \ref{ddt1-mcallequiv}.
\end{proof}

\section{Monads and comonads}\label{algsn}

\subsection{Algebras and coalgebras}

\begin{definition}
A monad (or triple) on a category $\cB$ is a monoid in the category of endofunctors of $\cB$ (with the monoidal structure given by composition of functors). A comonoid (or cotriple) is a comonoid in the category of endofunctors of $\cB$.
\end{definition}

The following is standard:
\begin{lemma}\label{adjnmonad}
Take an adjunction
$$
\xymatrix@1{\cA \ar@<1ex>[r]^G_{\top} & \cE \ar@<1ex>[l]^F}
$$
with unit $\eta:\id \to GF$ and co-unit $\vareps:FG \to \id$. Then $\top:=GF$ is a monad with unit $\eta$ and multiplication $\mu:=G\vareps F$, while $\bot:= FG$ is a comonad, with  co-unit $\vareps$ and comultiplication $\Delta:=F\eta G$.
\end{lemma}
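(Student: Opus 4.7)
The plan is to verify directly the defining axioms of a monad for $\top = GF$ and, dually, of a comonad for $\bot = FG$, using only (i) the two triangle identities $\vareps F \circ F\eta = \id_F$ and $G\vareps \circ \eta G = \id_G$ of the adjunction, and (ii) naturality of $\eta$ and $\vareps$. Since the statement is entirely formal, no additional machinery from the preceding sections of the paper is required; the work consists in doing the whiskering bookkeeping correctly.

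For the monad structure on $\top=GF$, associativity asks that
$\mu \circ \top\mu = \mu \circ \mu\top$ as natural transformations $\top^3 \to \top$, i.e.\ that
$$
G\vareps F \circ GFG\vareps F \;=\; G\vareps F \circ G\vareps FGF.
$$
The key step is to observe that naturality of $\vareps:FG\to \id$, applied to the morphism $\vareps$ itself, gives the equality $\vareps \circ FG\vareps = \vareps \circ \vareps FG$ of natural transformations $FGFG\to \id$; whiskering on the left by $G$ and on the right by $F$ yields exactly the required identity. The unit axioms $\mu \circ \top\eta = \id_\top = \mu \circ \eta\top$ unfold to
$$
G\vareps F \circ GF\eta \;=\; \id_{GF} \;=\; G\vareps F \circ \eta GF,
$$
and these are obtained by whiskering the two triangle identities: apply $G$ on the left of $\vareps F \circ F\eta = \id_F$ for the first equality, and apply $F$ on the right of $G\vareps \circ \eta G = \id_G$ for the second.

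The comonad structure on $\bot = FG$ follows by the same argument dualised. Coassociativity $\Delta\bot \circ \Delta = \bot\Delta \circ \Delta$ reduces to
$$
F\eta GFG \circ F\eta G \;=\; FGF\eta G \circ F\eta G,
$$
which comes from naturality of $\eta:\id \to GF$ applied to $\eta$ (giving $GF\eta \circ \eta = \eta GF \circ \eta$) after whiskering by $F$ on the left and $G$ on the right. The counit axioms $\vareps\bot \circ \Delta = \id_\bot = \bot\vareps \circ \Delta$ again follow by whiskering the two triangle identities, this time with $\eta$ rather than $\vareps$ in the middle.

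There is no real obstacle here; the statement is entirely diagrammatic and the proof is the standard ``universal'' verification going back to Godement/Huber. The only minor point to be careful about is distinguishing $\vareps F$, $F\vareps$, $FG\vareps$, $\vareps FG$ etc.\ in the whiskered composites, so I would present the argument either via pasting diagrams or by writing out each natural transformation explicitly on an object of $\cE$ (resp.\ $\cA$) to remove any ambiguity.
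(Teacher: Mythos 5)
Your verification is correct: the unit laws follow from whiskering the two triangle identities, and (co)associativity from naturality of $\vareps$ (resp.\ $\eta$) applied to itself, which is exactly the standard argument. The paper offers no proof at all, simply labelling the lemma as standard, so your write-up supplies precisely what is implicitly being cited.
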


\begin{definition}
Given a monad $(\top,\mu, \eta) $ on a category $\cE$, 
define the category $\cE^{\top}$ of $\top$-algebras to have objects
$$
\top E \xra{\theta} E,
$$
such that $\theta\circ \eta_E=\id$ and $\theta \circ \top \theta= \theta \circ  \mu_E$.
	
A morphism 
$$
g: ( \top E_1 \xra{\theta} E_1 ) \to  (\top E_2 \xra{\phi} E_2)  
$$	
of $\top$-algebras is a morphism $g:E_1 \to E_2$ in $\cE$ such that $\phi\circ \top g= g \circ \theta$.
\end{definition}

We define the comparison functor $K:\cA \to \cE^{\top}$ by
$$
B \mapsto ( UFUB \xra{U\vareps_B}  UB )
$$
on objects, and $K(g)=U(g)$ on morphisms.

\begin{definition}
The adjunction 
$$
\xymatrix@1{\cA \ar@<1ex>[r]^U_{\top} & \cE \ar@<1ex>[l]^F},
$$
is said to be  monadic (or tripleable) if $K:\cD \to \cE^{\top}$ is an equivalence.
\end{definition}

\begin{examples}
Intuitively, monadic adjunctions correspond to algebraic theories, such as the adjunction
$$
\xymatrix@1{\Ring \ar@<1ex>[r]^U_{\top} &\Set  \ar@<1ex>[l]^{\Z[-]},}
$$
 between rings and sets, $U$ being the forgetful functor. Other examples are $k$-algebras   over $k$-vector spaces, or groups over sets.
\end{examples}

\begin{definition}
Dually, given a comonad $(\bot, \Delta, \vareps)$ on a category $\cA$, we  define the category $\cA_{\bot}$ of $\bot$-coalgebras by 
$$
(\cA_{\bot})^{\op} := (\cA^{\op})^{\bot^{\op}},
$$
noting that $\bot^{\op}$ is a monad on the opposite category $\cA^{\op}$. The adjunction of Lemma \ref{adjnmonad} is said to be comonadic (or cotripleable) if the adjunction on opposite categories is monadic.
\end{definition}

\begin{example}
If $X$ is a topological space (or any site with enough points) and $X'$ is the set of points of $X$, let  $u:X'\to X$ be the associated morphism.
Then the adjunction $u^{-1}\dashv u_*$ on sheaves is comonadic, so the category of sheaves  on $X$ is equivalent  $u^{-1}u_*$-coalgebras in  the  category of sheaves  (or equivalently presheaves)  on $X'$

A more prosaic example is that for any ring $A$, the category of $A$-coalgebras is comonadic over the category of $A$-modules.
\end{example}

\subsection{Bialgebras }\label{bialgsn}

As in \cite{osdol} \S IV, take a category $\cB$ equipped with both a monad $(\top, \mu, \eta)$ and a comonad $(\bot, \Delta, \gamma)$, together with a distributivity transformation
$\lambda: \top\bot \abuts \bot \top$
for which the following diagrams commute:
$$
\xymatrix{
\top\bot \ar@{=>}[rr]^{\lambda} \ar@{=>}[d]_{\top\Delta}& &  \bot \top\ar@{=>}[d]_{\Delta\top} \\
\top\bot^2 \ar@{=>}[r]^{\lambda\bot} &\bot\top\bot \ar@{=>}[r]^{\bot\lambda} & \bot^2\top
}
\quad
\xymatrix{
\top\bot \ar@{=>}[rr]^{\lambda} & &  \bot \top \\
\top^2\bot\ar@{=>}[u]_{\mu\bot}  \ar@{=>}[r]^{\top\lambda} & \top\bot\top \ar@{=>}[r]^{\lambda\top}& \bot\top^2\ar@{=>}[u]_{\bot\mu}
}
$$

$$
\xymatrix{
\top\bot \ar@{=>}[rr]^{\lambda} \ar@{=>}[dr]_{\top\gamma}& &  \bot \top\ar@{=>}[dl]^{\gamma\top} \\
& \top} 
\quad
\xymatrix{
\top\bot \ar@{=>}[rr]^{\lambda} & & \bot \top \\
& \bot\ar@{=>}[ul]^{\eta\bot}  \ar@{=>}[ur]_{\bot\eta}.}
$$

\begin{definition}\label{bialgdef}
Given a distributive monad-comonad pair $(\top, \bot)$ on a category $\cB$, define the category $\cB^{\top}_{\bot}$ of bialgebras as follows. The objects of  $\cB^{\top}_{\bot}$ are triples $(\alpha, B, \beta)$ with $(\top B \xra{\alpha} B)$ an object of $\cB^{\top}$ and $B \xra{\beta} \bot B$ an object of $\cB_{\bot}$, such that the composition  $(\beta \circ \alpha):\top B \to \bot B$ agrees with the composition 
$$
\top B \xra{\top \beta} \top\bot B \xra{\lambda} \bot \top B \xra{\bot \alpha} \bot B. 
$$  

A morphism $f:(\alpha, B, \beta) \to (\alpha', B', \beta')$ is a morphism $f: B \to B'$ in $\cB$ such that $\alpha'\circ \top f = f \circ \alpha$ and $\beta' \circ f = \bot f \circ \beta$.  
\end{definition}

To understand how the data $(\top, \bot, \eta, \mu, \gamma, \Delta, \lambda)$ above occur naturally, note that by \cite{osdol} \S IV or \cite{paper2} \S \ref{paper2-gensdc}, these data are equivalent to a diagram
$$
\xymatrix@=8ex{
\cD \ar@<1ex>[r]^{U}_{\top} \ar@<-1ex>[d]_{V} 
&\ar@<1ex>[l]^{F} \cE  \ar@<-1ex>[d]_{V} 
\\
\ar@<-1ex>[u]_{G}^{\dashv}	\cA \ar@<1ex>[r]^{U}_{\top} 
&\ar@<1ex>[l]^{F} \ar@<-1ex>[u]_{G}^{\dashv} \cB,  
}
$$
with $F\dashv U$ monadic, $G\vdash V$ comonadic and $U,V$ commuting with everything (although $G$ and $F$ need not commute). The associated monad is $\top=UF$, and the comonad $\bot= VG$. Distributivity ensures that $\cD \simeq \cE^{\top}\simeq (\cB_{\bot})^{\top}$ and $\cD\simeq \cA_{\bot}\simeq (\cB^{\top})_{\bot}$. In other words, $\cD \simeq \cB^{\top}_{\bot}$.

\begin{example}
If $X$ is a topological space (or any site with enough points) and $X'$ is the set of points of $X$, let $\cD$ be the category of sheaves  of rings on $X$. If $\cB$ is the category of sheaves (or equivalently presheaves) of sets on $X'$, then the description above characterises $\cD$ as a category of bialgebras over $\cB$, with the comonad being $u^{-1}u_*$ for $u:X'\to X$, and the monad  being the free polynomial  functor. 
\end{example}

\section{Constructing SDCs}\label{constructsdc}

Recall the definition of an SDC:
\begin{definition}\label{sdcdef} A simplicial deformation complex $E^{\bullet}$ consists of smooth left-exact functors $E^n:\C_{\L} \to \Set$ for each $n \ge 0$, together with maps 
$$
\begin{matrix}
\pd^i:E^n \to E^{n+1} & 1\le i \le n\\
\sigma^i:E^{n}\to E^{n-1} &0 \le i <n,
\end{matrix}
$$
an associative product $*:E^m \by E^n \to E^{m+n}$, with identity $1: \bullet \to E^0$, where $\bullet$ is the constant functor $\bullet(A)=\bullet$ (the one-point set) on $\C_{\L}$, such that:
\begin{enumerate}
\item $\pd^j\pd^i=\pd^i\pd^{j-1}\quad i<j$.
\item $\sigma^j\sigma^i=\sigma^i\sigma^{j+1} \quad i \le j$.
\item 
$
\sigma^j\pd^i=\left\{\begin{matrix}
			\pd^i\sigma^{j-1} & i<j \\
			\id		& i=j,\,i=j+1 \\
			\pd^{i-1}\sigma^j & i >j+1
			\end{matrix} \right. .
$
\item $\pd^i(e)*f=\pd^i(e*f)$.
\item $e*\pd^i(f)=\pd^{i+m}(e*f)$, for $e \in E^m$.
\item $\sigma^i(e)*f=\sigma^i(e*f)$.
\item $e*\sigma^i(f)=\sigma^{i+m}(e*f)$, for $e \in E^m$.
\end{enumerate}
\end{definition}

From the viewpoint of homotopical algebra, there is a more natural way of characterising the smoothness criterion  for $E^{\bullet}$. Analogously to \cite{sht} Lemma VII.4.9, we define matching objects by $M^{-1}E:=\bullet$, $M^0E:=E^0$, and for $n>0$
$$
M^nE=\{(e_0,e_1,\ldots,e_n) \in (E^n)^{n+1}\,|\, \sigma^ie_j=\sigma^{j-1}e_i\, \forall i<j \}.
$$

\begin{proposition}\label{sdcmatch}
The  canonical maps $\underline{\sigma}:E^{n+1} \to M^nE$, given by $e \mapsto (\sigma^0e, \sigma^1e, \ldots , \sigma^{n}e)$, are all  smooth, for $n\ge 0$.
\end{proposition}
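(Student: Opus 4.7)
The plan is to induct on $n$ using a filtration of the matching object. For $-1 \leq k \leq n$, define
\[
M^{n,k}E := \{(e_0, \ldots, e_k) \in (E^n)^{k+1} \mid \sigma^i e_j = \sigma^{j-1} e_i \text{ for all } 0 \leq i < j \leq k\},
\]
so that $M^{n,-1}E = \bullet$, $M^{n,n}E = M^n E$, and let $\underline{\sigma}_k : E^{n+1} \to M^{n,k}E$ send $e$ to $(\sigma^0 e, \ldots, \sigma^k e)$. These assemble into a tower whose composition is $\underline{\sigma}$, and it suffices to prove by induction on $k$ that each $\underline{\sigma}_k$ is smooth. The base case $k = -1$ is automatic from smoothness of $E^{n+1}$.

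For the inductive step, fix a surjection $A \twoheadrightarrow B$ in $\C_{\L}$, which by Lemma \ref{small} reduces to the case of a small extension $I \to A \to B$. Given a tuple $(e_0, \ldots, e_k) \in M^{n,k}E(A)$ and $e \in E^{n+1}(B)$ with $\sigma^i e$ equal to the image of $e_i$ for $0 \leq i \leq k$, the inductive hypothesis applied to $\underline{\sigma}_{k-1}$ yields $\tilde{e}' \in E^{n+1}(A)$ lifting $e$ with $\sigma^i \tilde{e}' = e_i$ for $i < k$. The remaining problem is to modify $\tilde{e}'$ to a new lift $\tilde{e}$ so that additionally $\sigma^k \tilde{e} = e_k$, while preserving $\sigma^i \tilde{e} = e_i$ for $i < k$ and the image of $\tilde{e}$ in $E^{n+1}(B)$.

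The main obstacle will be producing this correction. The plan is to use the section $\pd^{k+1}: E^n \to E^{n+1}$ of $\sigma^k$ afforded by axiom 3 ($\sigma^k \pd^{k+1} = \id$), together with the product axioms (4)--(7), to engineer a modification of $\tilde{e}'$ supported in the ideal $I$. Because $\m(A) \cdot I = 0$ in a small extension, the fibres of the various $E^n(A) \to E^n(B)$ carry natural $I$-module structures through which the SDC operations act linearly, so the correction can be absorbed coordinate-wise. The simplicial identity $\sigma^i \pd^{k+1} = \pd^k \sigma^i$ for $i < k$ then ensures that the adjustment targeting the $k$-th component does not disturb the lower $\sigma^i$-values. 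The technical crux is the bookkeeping for this simultaneous compatibility; by contrast, the reduction to intermediate matching objects and the use of smoothness of $E^{n+1}$ are largely organizational.
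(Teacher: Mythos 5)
Your strategy --- induction over partial matching objects, reduction to small extensions, and correction of a partial lift by an element supported in $I$ --- is viable, but as written the proof has a genuine gap: the entire content of the proposition sits in the correction step, and you only announce a ``plan'' for it. Concretely, after producing $\tilde e'$ with $\sigma^i\tilde e'=e_i$ for $i<k$, you must exhibit $v\in \CC^{n+1}(E)\ten I$ with $\sigma^k v=\delta$ and $\sigma^i v=0$ for $i<k$, where $\delta:=e_k-\sigma^k\tilde e'\in \CC^n(E)\ten I$ is the defect. The candidate $v=\pd^{k+1}\delta$ that your sketch points towards does satisfy $\sigma^k v=\delta$, but $\sigma^i v=\pd^k\sigma^i\delta$ for $i<k$, so it leaves the lower components undisturbed only if $\sigma^i\delta=0$ for all $i<k$. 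That vanishing is exactly where the matching relations have to be used: $\sigma^i\delta=\sigma^i e_k-\sigma^i\sigma^k\tilde e'=\sigma^{k-1}e_i-\sigma^{k-1}\sigma^i\tilde e'=0$, combining the relation $\sigma^i e_k=\sigma^{k-1}e_i$ defining $M^{n,k}E$ with the cosimplicial identity $\sigma^i\sigma^k=\sigma^{k-1}\sigma^i$ (axiom (2) of Definition \ref{sdcdef}). Without this verification the induction does not close, since a correction aimed at the $k$-th component will generically destroy the conditions already achieved; this is the one point where the SDC axioms actually enter, and it cannot be waved off as bookkeeping.

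Once that computation is supplied, your argument is complete, and it is genuinely different in organisation from the paper's. The paper does not induct at all: by the Standard Smoothness Criterion, smoothness of $E^{n+1}$ reduces the whole statement to surjectivity of $\underline{\sigma}$ on tangent spaces, where it becomes the purely linear fact that for any cosimplicial vector space $C^{\bullet}$ one has $C^{n+1}=N_c^{n+1}(C)\oplus D^{n+1}(C)$ with $\underline{\sigma}\colon D^{n+1}(C)\to M^nC$ an isomorphism. That decomposition is precisely your correction argument packaged once and for all at the linear level; the paper's route buys brevity by quoting the cosimplicial Dold--Kan splitting, while yours, if completed, makes the section $\pd^{k+1}$ and the role of the simplicial identities explicit.
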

\begin{proof}
 Since $E^{n}$ is smooth, by the Standard Smoothness Criterion (e.g. \cite{Man} Proposition 2.17)  it suffices to show that this is surjective on tangent spaces. The tangent space of $M^{n}E$ consists of $(n+1)$-tuples $\gamma_i \in C^n(E)$ satisfying $\sigma^i\gamma_j =\sigma^{j-1}\gamma_i$, for $i <j$. For any cosimplicial complex $C^{\bullet}$, there is a decomposition of the associated cochain complex as $C^n=N_c^n(C) \oplus D^n(C)$, where $N_c^n(C)=\cap_{i=0}^{n-1}\ker \sigma^i$, and $D^n(C)=\sum_{i=1}^n\pd^i C^{n-1}$. Moreover $\underline{\sigma}:D^{n} \to M^{n-1}C$ is an isomorphism, giving the required surjectivity.  
\end{proof}

\begin{definition}\label{sdcmcdef}
Given an SDC $E$, recall from \cite{paper2} that  the Maurer-Cartan functor $\mc_E: \C_{\L} \to \Set$ is defined by 
$$
\mc_E(A)=\{\omega \in E^1(A) \colon \omega*\omega=\pd^1(\omega)\}.
$$
The group $E^0(A)$ acts on this by conjugation, and we define $\Def_E(A)$ to be the groupoid with objects $\mc_E(A)$ and morphisms given by $E^0(A)$ via this action. We say that an SDC \emph{governs} a deformation problem if $\Def_E$ is equivalent to the associated deformation functor.
\end{definition}

\begin{definition}\label{sdccoho} Recall that  $\CC^{\bullet}(E)$ denotes the tangent space of $E^{\bullet}$, i.e. $\CC^n(E)=E^n(k[\eps])$ for $\eps^2=0$. This has the natural structure of a cosimplicial complex, by \cite{paper2}, and we set $\H^i(E):=\H^i(\CC^{\bt}(E))$.
\end{definition}

\subsection{SDCs from bialgebraic structures}

\begin{definition}\label{delta**}
Recall from \cite{monad} that  $\Delta_{**}$ is defined to be the subcategory of the ordinal number category $\Delta$ containing only  those non-decreasing morphisms $f:\mathbf{m} \to \mathbf{n}$ with $f(0)=0, f(m)=n$. We define a monoidal structure on this category by setting $\mathbf{m}\ten \mathbf{n}= \mathbf{m+n}$, with 
$$
(f\ten g)(i)= \left\{ \begin{matrix} f(i) & i\le m \\ g(i-m)+n & i \ge m, \end{matrix} \right.
$$ 
for $f:\mathbf{m} \to \mathbf{n}$. 
\end{definition}

\begin{definition}
As in \cite{monad}, define   monoidal structures on $\Set^{\Delta_{**}}$ and $\bS^{\Delta_{**}}$ by setting
$$
(X\ten Y)^n := \coprod_{a+b=n} X^a\ten Y^b,
$$
with operations given by 
\begin{eqnarray*}
\pd^i(x \ten y) &=& \left\{\begin{matrix}(\pd^ix)\ten y & i\le a \\ x\ten (\pd^{i-a}y) & i>a; \end{matrix} \right.\\
\sigma^i(x\ten y)&=& \left\{\begin{matrix}(\sigma^ix)\ten y & i< a \\ x\ten (\pd^{i-a}y) & i\ge a. \end{matrix} \right.
\end{eqnarray*}
The identity $I$ is given by $I^0=1$ and $I^n=\emptyset$ for $n>0$.
\end{definition}

Note that an SDC over $\L$ is a smooth left-exact functor from $\C_{\L}$ to the category of monoids in $\Set^{\Delta_{**}}$.

Assume that we have a diagram 
$$
\xymatrix@=8ex{
\cD \ar@<1ex>[r]^{U}_{\top} \ar@<-1ex>[d]_{V} 
&\ar@<1ex>[l]^{F} \cE  \ar@<-1ex>[d]_{V} 
\\
\ar@<-1ex>[u]_{G}^{\dashv}	\cA \ar@<1ex>[r]^{U}_{\top} 
&\ar@<1ex>[l]^{F} \ar@<-1ex>[u]_{G}^{\dashv} \cB,  
}
$$
of homogeneous (i.e. preserving fibre products, but not the final object) functors from $\C_{\L}$ to $\Cat$
as in \cite{paper2} \S \ref{paper2-gensdc} (i.e. $\cB$ has uniformly trivial deformation theory, with the diagram satisfying the conditions of \S \ref{bialgsn}).   Recall that we write
$
\toph=UF$, 	$\both=FU$,
$\botv=VG$, and	$\topv=GV$.

\begin{proposition}\label{enrichtopbot}
For the diagram above and $A \in \C_{\L}$, $\cB(A)$ has the structure of category enriched in $\Set^{\Delta_{**}}$, with 
$$
\cHom_{\cB}(B,B')^n= \Hom_{\cB}(\toph^n B, \botv^n B').
$$
\end{proposition}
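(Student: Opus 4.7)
The plan is to construct the enrichment explicitly from the monad $\toph$, the comonad $\botv$, and their distributivity $\lambda: \toph\botv \Rightarrow \botv\toph$ on $\cB(A)$ (inherited from the distributive structure on the outer diagram). For $f: \toph^n B \to \botv^n B'$, I define the coface $\pd^i f$ for $1 \le i \le n$ as the composite
$$
\toph^{n+1}B \xra{(\toph^{i-1}\mu\toph^{n-i})_B} \toph^n B \xra{f} \botv^n B' \xra{(\botv^{i-1}\Delta\botv^{n-i})_{B'}} \botv^{n+1}B',
$$
and the codegeneracy $\sigma^i f$ for $0 \le i \le n-1$ as
$$
\toph^{n-1}B \xra{(\toph^i\eta\toph^{n-1-i})_B} \toph^n B \xra{f} \botv^n B' \xra{(\botv^i\gamma\botv^{n-1-i})_{B'}} \botv^{n-1}B'.
$$
Routine checks using associativity and coassociativity of $(\mu,\Delta)$, together with the unit and counit axioms, verify the identities of Definition \ref{sdcdef}(1)--(3) for the subset of indices available in $\Delta_{**}$, giving each $\cHom_{\cB}(B,B')$ the structure of an object of $\Set^{\Delta_{**}}$.

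Next I define the composition
$$
\cHom_{\cB}(B,B')^a \ten \cHom_{\cB}(B',B'')^b \to \cHom_{\cB}(B,B'')^{a+b}
$$
by sending $(f,g)$ to the composite
$$
\toph^{a+b}B \xra{\toph^b f} \toph^b\botv^a B' \xra{\lambda^{(a,b)}_{B'}} \botv^a\toph^b B' \xra{\botv^a g} \botv^{a+b} B'',
$$
where $\lambda^{(a,b)}: \toph^b\botv^a \Rightarrow \botv^a\toph^b$ is the iterated distributivity built from $\lambda$; iterating the hexagons of \S \ref{bialgsn} relating $\lambda$ to $\mu,\Delta$ shows $\lambda^{(a,b)}$ is well-defined. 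The identity in $\cHom_{\cB}(B,B)^0 = \Hom_{\cB}(B,B)$ is $\id_B$.

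It remains to check two things: (i) associativity of composition, and (ii) that composition is a morphism in $\Set^{\Delta_{**}}$ with respect to the tensor product of Definition \ref{delta**}. Statement (i) reduces via naturality to a coherence for the iterated $\lambda^{(a,b,c)}$ obtained from repeated application of the axioms for $\lambda$. Statement (ii) amounts to the identities
$$
\pd^i(g \circ f) = \begin{cases} g \circ (\pd^i f), & 1 \le i \le a,\\ (\pd^{i-a} g) \circ f, & a < i \le a+b,\end{cases}
$$
together with analogous identities for $\sigma^i$ and a unit identity. When the operation acts in the "first block" ($i \le a$), the required commutativity reduces via naturality of $\lambda$ to the hexagon relating $\lambda$ with $\mu\toph$ (resp.\ with $\eta$ for degeneracies); when it acts in the "second block" ($i > a$), it reduces to the hexagon relating $\lambda$ with $\botv\Delta$ (resp.\ with $\gamma$).

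The main obstacle is (ii): bookkeeping how $\pd^i$ and $\sigma^i$ interact with the iterated $\lambda^{(a,b)}$ is intricate, since each crossing of $\mu,\eta,\Delta,\gamma$ past a block of $\lambda$'s must be replaced by a sum of single-step commutations. However, the four hexagons imposed on $(\top,\bot,\eta,\mu,\gamma,\Delta,\lambda)$ in \S \ref{bialgsn} are exactly what is needed to make every intermediate square commute, so the verification, while lengthy, is routine.
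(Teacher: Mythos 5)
Your overall strategy --- building the enrichment explicitly from $(\mu,\Delta,\eta,\gamma,\lambda)$ and reducing all verifications to naturality plus the four compatibility diagrams of \S\ref{bialgsn} --- is the natural one (the paper itself only cites \cite{monad} at this point). However, step (ii) fails with the definitions as you have written them, and not for reasons that the hexagons can repair. Your cofaces insert $\mu$ and $\Delta$ at the $i$-th position counted from the \emph{same} end (the outside) of the words $\toph^n$ and $\botv^n$, whereas your composition $\botv^a g\circ\lambda^{(a,b)}_{B'}\circ\toph^b f$ necessarily places each factor's $\toph$'s and $\botv$'s at \emph{opposite} ends: in the source $\toph^{a+b}B=\toph^b(\toph^a B)$ the copies belonging to $f$ are the inner $a$, while in the target $\botv^{a+b}B''=\botv^a(\botv^b B'')$ they are the outer $a$. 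Concretely, for $a=b=1$ and $i=1$,
$$
\pd^1(g\circ f)=\Delta_{\botv B''}\circ\botv g\circ\lambda_{B'}\circ\toph f\circ\mu_{\toph B},\qquad
g\circ(\pd^1 f)=\botv^2 g\circ\lambda^{(2,1)}_{B'}\circ\toph\Delta_{B'}\circ\toph f\circ\toph\mu_B .
$$
The target-side portions of these two composites do agree, by the hexagon relating $\lambda$ to $\Delta$ together with naturality; but the source-side portions $\mu_{\toph B}$ (merging the outer two copies of $\toph$) and $\toph\mu_B$ (merging the inner two) are genuinely different morphisms $\toph^3B\to\toph^2B$, and no naturality or coherence identifies the results. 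The same mismatch occurs for $a<i\le a+b$ and for the codegeneracies, so with your conventions composition is not a morphism in $\Set^{\Delta_{**}}$. This cannot be waved away as a harmless reindexing: the monoidal structure of Definition \ref{delta**} (equivalently, axioms (4)--(7) of Definition \ref{sdcdef}) is not invariant under reversing the insertion position in only one of the two words.

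The fix is to count the two insertion positions from opposite ends, matched to your chosen order of composition: keep $\Delta$ at position $i$ from the outside of $\botv^n$ but put $\mu$ at position $i$ from the \emph{inside} of $\toph^n$, i.e.
$$
\pd^i f=(\botv^{i-1}\Delta\botv^{n-i})_{B'}\circ f\circ(\toph^{n-i}\mu\toph^{i-1})_{B},
$$
and analogously for $\sigma^i$ (alternatively, keep your formulas and reverse the order of the tensor factors in the composition map). One checks in the example above that this makes both sides equal, and in general the verification then does reduce, exactly as you describe, to naturality of $\mu,\Delta,\eta,\gamma$ together with the four diagrams for $\lambda$. Two smaller points: well-definedness of $\lambda^{(a,b)}$ requires only the interchange law for natural transformations, not the hexagons (those are needed only to move $\mu,\Delta,\eta,\gamma$ past $\lambda$); and the unit and associativity axioms for the enriched category should be recorded explicitly, though they follow from naturality and the unit diagrams once the indexing is repaired.
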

\begin{proof}
\cite{monad} Proposition \ref{monad-enrichtopbot}.
\end{proof}

\begin{examples}\label{keyegs}
\begin{enumerate}
\item
If $X$ is a topological space (or any site with enough points) and $X'$ is the set of points of $X$, let $\cD(A)$ be the category of sheaves  of flat $A$-algebras on $X'$. If $\cB$ is the category of sheaves (or equivalently presheaves) of flat $A$-modules on $X'$, then the description above characterises $\cD$ as  $\cB^{\toph}_{\botv}$, with  $\botv=u^{-1}u_*$ for $u:X'\to X$, and  $\toph$ being the free $A$-algebra functor for module. This example arises when considering deformations of a scheme $X$ in \cite{paper2} \S 3.2, since deformations of $X$ are equivalent to deformations of the sheaf $\O_X$ of algebras.

\item
Another important example (considered in \cite{paper2} \S 3.1) is when $\cD(A)$ is the category of flat Hopf algebras over $A$,  with $\cA(A)$, $\cE(A)$  and $\cB(A)$ the categories of flat algebras, coalgebras and modules, respectively.  

\item\label{keyeg} 
In order to make the first example functorial, we could let $\cB$ be the category of pairs $( \{M_x\}_{x \in X},X)$, for $X$ a topological space and $\{M_x\}_{x \in X} $ a presheaf of flat $A$-modules on $X'$, with a morphism $f^{\sharp}:  ( \{N_y\}_{y \in Y},Y)\to ( \{M_x\}_{x \in X},X)$  given by a map $f:X \to Y$ of topological spaces, together with maps $f^{\sharp}_x: N_{f(x)} \to M_x$ for all $x \in X$.

We may define $\botv$ and $\toph$ as before, and then $\cB^{\toph}_{\botv}$ will be the category of pairs $( \O_X,X)$, where $X$ is a topological space and $\O_X$ a sheaf of flat $A$-algebras on $X$.
\end{enumerate}
\end{examples}

\subsection{SDCs from diagrams}\label{diagram}

\begin{definition}
Given a morphism $f:D \to D'$ in $\cD(k)$, choose  lifts   $B,B' \in \cB(\L)$ of $UVD, UVD'\in \cB(k)$ (which exist since the deformation theory of $\cB$ is uniformly trivial). Then  define
$$
E^n_{\cD/\cB}(f):=\underline{\hom}^n(B,B')_{UV(\alpha^n_{D'}\circ f \circ \vareps^n_D)}: \C_{\L} \to \Set,
$$
where $\alpha:1 \to \topv $ and $\vareps: \both \to 1$ are the unit and counit of the respective adjunctions.

Write $E^*_{\cD/\cB}(D):= E^*_{\cD/\cB}(\id_D)$. 
Note that uniform triviality of $\cB$ ensures that these constructions are independent of the choices of lift, since any other choice is isomorphic.

\end{definition}

\begin{lemma}\label{cosimplicialE}
$E_{\cD/\cB}(f)(A)$ has the natural structure of a cosimplicial complex.

For every pair of composable morphisms $f,g$ in $\cD(k)$ between such objects, there is a product 
$$
E_{\cD/\cB}(f)(A)\ten E_{\cD/\cB}(g)(A) \to E_{\cD/\cB}(f\circ g)(A)
$$
in $\Set^{\Delta_{**}} $, functorial in $A$. 
\end{lemma}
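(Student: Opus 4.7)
My plan is to combine the $\Set^{\Delta_{**}}$-enrichment of $\cB(A)$ from Proposition \ref{enrichtopbot} with two extra cofaces built from the unit $\alpha\co 1\to\topv$ and counit $\vareps\co \both\to 1$. For fixed lifts $B,B'$ of $UVD,UVD'$, Proposition \ref{enrichtopbot} equips $\Hom_{\cB}(\toph^nB,\botv^nB')$ with inner cofaces $\pd^i$ ($1\le i\le n$) and codegeneracies $\sigma^i$ ($0\le i<n$), built from $\mu$, $\eta$, $\Delta$ and the comonad counit. The subfunctor $E^n_{\cD/\cB}(f)$ is cut out as the fibre over the residue $UV(\alpha^n_{D'}\circ f\circ \vareps^n_D)$, using the canonical identifications $\toph^nUVD\cong UV\both^nD$ and $\botv^nUVD'\cong UV\topv^nD'$ coming from the commutation of $U,V$ with $F,G$. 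Naturality of $\mu,\eta,\Delta,\gamma$ against the reference map guarantees that the inner $\pd^i$ and all $\sigma^i$ preserve the fibre, and smoothness and left-exactness of $E^n_{\cD/\cB}(f)$ follow since reduction is surjective and left exact.

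I would then define the two remaining outer cofaces by the obvious prescription: post-composing with the map $\botv^nB'\to \botv^{n+1}B'$ induced from $\alpha\co 1\to\topv$ produces $\pd^0$, and pre-composing with the map $\toph^{n+1}B\to\toph^nB$ induced from $\vareps\co\both\to 1$ produces $\pd^{n+1}$. The bar-resolution identities $\alpha^{n+1}=\topv^n\alpha\circ\alpha^n$ and $\vareps^{n+1}=\vareps\circ\both^n\vareps$ show that these land inside the fibre defining $E^{n+1}_{\cD/\cB}(f)$. The cosimplicial identities then split into three groups: those purely among inner cofaces and degeneracies come from Proposition \ref{enrichtopbot}; the identities $\sigma^0\pd^0=\id$ and $\sigma^n\pd^{n+1}=\id$ are the unit-counit triangles for the adjunctions $V\dashv G$ and $F\dashv U$; the remaining mixed relations, such as $\pd^j\pd^0=\pd^0\pd^{j-1}$ for $j\ge 1$, follow from naturality of $\alpha$ and $\vareps$ against the inner structure.

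For the composition product, I would take the enriched composition already supplied by Proposition \ref{enrichtopbot}: on $\Hom(\toph^mB',\botv^mB'')\otimes\Hom(\toph^nB,\botv^nB')$ it is given (up to ordering) by
$$
\psi\otimes\phi\;\longmapsto\;\botv^n(\psi)\circ\lambda^{m,n}_{B'}\circ\toph^m(\phi),
$$
where $\lambda^{m,n}\co \toph^m\botv^n\Rightarrow\botv^n\toph^m$ iterates the distributivity transformation. Functoriality in $A$ is automatic. It then suffices to check that this pairing restricts to $E^m(f)\otimes E^n(g)\to E^{m+n}(f\circ g)$, which after applying $UV$ reduces to an identity of reference maps at level $m+n$; this follows from naturality of $\alpha,\vareps$ together with the two lower distributivity squares of Section \ref{bialgsn} relating $\lambda$ to $\alpha,\vareps,\eta,\gamma$. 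The main obstacle is precisely this $\lambda$-bookkeeping: establishing the reference-map identity, and checking compatibility of the outer cofaces $\pd^0,\pd^{n+1}$ with the product, each genuinely uses the full force of the four distributivity squares of Section \ref{bialgsn} iterated, together with the bar-resolution identities for $\alpha^n$ and $\vareps^n$. None of this is deep, but the diagrams are large.
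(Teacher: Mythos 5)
Your treatment of the $\Set^{\Delta_{**}}$-structure (inner cofaces and codegeneracies) and of the product matches the paper: both are read off from Proposition \ref{enrichtopbot}, the product being the enriched composition via iterated distributivity. The gap is in the two \emph{outer} cofaces, which is precisely the non-routine content of the lemma. You propose to define $\pd^0$ and $\pd^{n+1}$ by inserting the unit $\alpha\co 1\to\topv$ and counit $\vareps\co\both\to 1$ at the extreme positions. But $\alpha$ and $\vareps$ are natural transformations of endofunctors of $\cD$ (resp.\ $\cE$, $\cA$); they can be evaluated at an object of $\cB$ only in slots where a $G$ or an $F$ is already present. Inside $\botv^nB'=(VG)^nB'$ there are exactly $n$ such slots, and these $\alpha$-insertions, paired with the $n$ admissible $\vareps$-insertions into $\toph^{n+1}B$, already account for the inner cofaces $\pd^1,\dots,\pd^n$ of the $\Delta_{**}$-structure. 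What $\pd^0$ needs is the \emph{innermost} insertion $\topv^n\alpha_{D'}$, which can only be applied to $D'$ itself: it exists on $UVD'$ over $k$ but not on the lift $B'$ over $A$, because $B'$ carries no coalgebra structure map $B'\to\botv B'$ — lifting that structure is the deformation problem. The same applies to $\pd^{n+1}$ and the missing algebra structure $\toph B\to B$. Your bar-resolution identities hold for the reference maps over $k$ but do not yield maps of the functors $E^n_{\cD/\cB}(f)$ on $\C_{\L}$.

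The paper's resolution is to use the product itself together with the canonical Maurer--Cartan elements $\omega_D\in E^1_{\cD/\cB}(\id_D)$ and $\omega_{D'}\in E^1_{\cD/\cB}(\id_{D'})$ (the distinguished lifts of the bialgebra structure maps $\toph B\to\botv B$ and $\toph B'\to\botv B'$), setting $\pd^0x:=\omega_{D'}*x$ and $\pd^{n+1}x:=x*\omega_D$; the cosimplicial identities then follow from associativity of $*$ and the Maurer--Cartan equation for $\omega_D,\omega_{D'}$. This is not avoidable bookkeeping: the fact that the outer cofaces carry the lifted algebra and coalgebra structures of the endpoints is exactly what makes $\mc$ of the resulting cosimplicial object parametrise deformations. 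Without that input you obtain only a $\Delta_{**}$-object, not a cosimplicial one.
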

\begin{proof}
It follows from Proposition \ref{enrichtopbot} that $E_{\cD/\cB}(f)(A) \in Set^{\Delta_{**}} $, with operations
\begin{eqnarray*}
\pd^i(x) &=&    \botv^{i-1}V\alpha_{G\botv^{n-i}B}\circ	x \circ \toph^{i-1}U\vareps_{F\toph^{n-i}B},\quad 0< i \le n \\
\sigma^i(x) &=&	 \botv^{i}\gamma_{\botv^{n-i-1}B}\circ	x \circ \toph^{i}\eta_{\toph^{n-i-1}B}, \quad 0 \le i < n,
\end{eqnarray*}
for $\eta: 1 \to \toph$ and $\gamma: \botv \to 1$ the respective unit and co-unit. The multiplication also follows from Proposition \ref{enrichtopbot}.

The canonical object of $\mc(E_{\cD/\cB}(\id_D)(\L))$ corresponding to $D$ gives an element $\omega_D \in E_{\cD/\cB}(\id_D)(\L)^1$ and 
we then enhance the structure above to give a cosimplicial structure by setting
$$
\pd^0x:= \omega_{D'} * x \quad \pd^{n+1}x= x* \omega_D,
$$
for $x \in  E_{\cD/\cB}(f)(A)^n$.
\end{proof}

\begin{definition}\label{powersdc} 
Given an SDC $E$, and a simplicial set $X$, define an SDC $E^X$ by 
$$
(E^X)^n=(E^n)^{X_n}.
$$
For $x \in X_{n+1}$, $y \in Y_{n+1}$, $z \in X_{m+n}$, $1 \le i \le n$, $0 \le j < n$, $e \in (E^X)^n$ and $f \in (E^X)^m$, we define the operations by
\begin{eqnarray*}
\pd^i(e)(x)&:=& \pd^i(e(\pd_i x))\\
\sigma^j(e)(y)&:=& \sigma^j(e(\sigma_i y)),\\ 
(f*e)(z)&:=& f((\pd_{m+1})^nz)*e((\pd_{0})^mz).
\end{eqnarray*}
\end{definition}

\begin{definition}
Let $\CC^{\bt}_{\cD/\cB}(f)$ be the 
 the tangent space of $E_{\cD/\cB}(f)$. This is a vector space over $k$, and we 
 define 
$$
\Ext^*_{\cD/\cB}(f):= \H^*(\CC^{\bt}_{\cD/\cB}(f)); 
$$
this construction is closely related to Van Osdol's bicohomology (\cite{osdol}). 
\end{definition}

\begin{lemma}\label{powercoho}
If $X$ is a finite simplicial set, then
$$
\H^n(E^X)\cong \bigoplus_{i+j=n} \H^i(E)\ten \H^j(X,k).
$$
\end{lemma}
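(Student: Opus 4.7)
The strategy is to recognise $\CC^\bullet(E^X)$ as the diagonal of a bicosimplicial $k$-vector space whose two directions separately compute $\H^\bullet(E)$ and $\H^\bullet(X,k)$, then apply the cosimplicial Eilenberg--Zilber theorem together with the Künneth formula over the field $k$.

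First, evaluating at $k[\eps]$ and using that each $X_n$ is finite (since $X$ is a finite simplicial set) gives a canonical isomorphism
$$
\CC^n(E^X) = E^n(k[\eps])^{X_n} \cong \CC^n(E) \ten_k k^{X_n}.
$$
Under this identification, the face operations $\pd^i$ ($1\le i\le n$) and degeneracies $\sigma^i$ ($0\le i<n$) of Definition~\ref{powersdc} take the tensor form $\pd^i_E\ten \pd_i^*$ and $\sigma^i_E\ten \sigma_i^*$, where $\pd_i^*,\sigma_i^*$ denote pullbacks along the simplicial operations of $X$.

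Next, I would consider the bicosimplicial $k$-vector space $V^{p,q} := \CC^p(E) \ten_k k^{X_q}$, with horizontal cosimplicial structure from $\CC^\bullet(E)$ and vertical from the cosimplicial vector space $k^{X_\bullet}$ (whose normalisation computes $\H^\bullet(X,k)$). The previous step identifies the inner SDC operations on $\CC^\bullet(E^X)$ with those on $\mathrm{diag}(V)$. The outer face operations on $\CC^\bullet(E^X)$, arising from multiplication by the unit of $E^X$, are pointwise in $X$ because $1_{E^X}$ is the constant function $x\mapsto 1_E$ and the product in Definition~\ref{powersdc} is pointwise up to reindexing by iterated faces; tracking the combinatorics of the $\pd_{m+1}^n$ and $\pd_0^m$ factors confirms the match, so $\CC^\bullet(E^X)\cong \mathrm{diag}(V)$ as cosimplicial vector spaces.

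By the cosimplicial Eilenberg--Zilber theorem, the normalised complex of a diagonal is naturally quasi-isomorphic to the total complex of the double normalised complex:
$$
N\CC^\bullet(E^X) \simeq \Tot\bigl(N\CC^\bullet(E) \ten_k N k^{X_\bullet}\bigr).
$$
Since everything lives over the field $k$, the algebraic Künneth theorem is an unobstructed isomorphism on cohomology, yielding
$$
\H^n(E^X) \cong \bigoplus_{i+j=n} \H^i(\CC^\bullet(E)) \ten_k \H^j(k^{X_\bullet}) = \bigoplus_{i+j=n} \H^i(E) \ten_k \H^j(X,k).
$$

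The main obstacle is the second step: confirming that the outer face maps on $\CC^\bullet(E^X)$ are compatible with the tensor-product diagonal structure on $V$. The subtlety comes from the product formula $(f*e)(z) = f(\pd_{m+1}^n z) * e(\pd_0^m z)$, which mixes the SDC multiplication with iterated face maps in $X$; specialising $f$ to the unit and verifying that one recovers exactly the pointwise outer cosimplicial operations is the only place where nontrivial bookkeeping is required. Once this matching is established, the remainder is a standard application of Eilenberg--Zilber followed by Künneth over a field.
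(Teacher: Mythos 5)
Your proof is correct and follows essentially the same route as the paper's: the paper's argument is precisely the identification $\CC^{\bt}(E^X)\cong\CC^{\bt}(E)\ten k^X$ (using finiteness of $X$) followed by the K\"unneth formula, with the Eilenberg--Zilber comparison between the diagonal and the total complex implicit in that citation. Your additional bookkeeping for the outer face maps is the right point to check and does go through, since the distinguished element of $(E^X)^1$ defining them is the constant function on $X_1$.
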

\begin{proof}
Since $X$ is finite, $\CC^{\bt}(E^X)\cong\CC^{\bt}(E)\ten k^X$, and the result now follows from the K\"unneth formula.
\end{proof}

\begin{definition}\label{sdcdiagram}
Given a small category $\bI$ and an $\bI$-diagram $\bD:\bI \to \cD(k)$, 
define the SDC $E^{\bt}_{\cD/\cB}(\bD)$ by
$$
E^n_{\cD/\cB}(\bD)= \prod_{\substack{i_0 \xra{f_1} i_1 \xra{f_2} \ldots \xra{f_n} i_n\\ \text{in }\bI}} E^n(\bD(f_n\circ f_{n-1}\circ \ldots f_0)) = \prod_{x \in B\bI_n} E^n( \bD(\pd_1^{\phantom{1}n-1}x)),
$$
where $B\bI$ is the nerve of $\bI$ (so $B\bI_0=\Ob(\bI)$,  $B\bI_1=\Mor(\bI)$), and  $\pd_1^{\phantom{1}-1}:=\sigma_0$.

We define the operations by the formulae of Definition \ref{powersdc}.
\end{definition}

\begin{theorem}\label{diagramdeform}
The SDC $E^{\bt}_{\cD/\cB}(\bD)$ governs deformations of the diagram $\bD:\bI \to \cD(k)$.
\end{theorem}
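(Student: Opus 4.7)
The plan is to identify $\Def_{E^{\bt}_{\cD/\cB}(\bD)}(A)$ with the groupoid of $A$-deformations of the functor $\bD$ by decomposing the Maurer--Cartan data coordinate-wise and translating into the language of $\cD(A)$.

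First I unpack the structure. Since $E^n_{\cD/\cB}(\bD)(A)=\prod_{x\in B\bI_n}E^n_{\cD/\cB}(\bD(\pd_1^{n-1}x))(A)$, an element $\omega\in E^1(A)$ is a family $(\omega_f)_{f\in\Mor(\bI)}$ with $\omega_f\in E^1_{\cD/\cB}(\bD(f))(A)$, and similarly $g\in E^0(A)$ is a family $(g_i)_{i\in\Ob(\bI)}$ with $g_i\in E^0_{\cD/\cB}(\id_{\bD(i)})(A)$. Using the formulae of Definition \ref{powersdc}, I then evaluate $\omega*\omega=\pd^1\omega$ at each $x\in B\bI_2$. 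For a non-degenerate $x=(i_0\xra{f_1}i_1\xra{f_2}i_2)$ this reads
$$
\omega_{f_1}*\omega_{f_2}=\pd^1\omega_{f_2\circ f_1}\qquad\text{in } E^2_{\cD/\cB}(\bD(f_2\circ f_1))(A),
$$
where the product on the left is that supplied by Lemma \ref{cosimplicialE}; evaluating on degenerate 2-simplices $\sigma_0 f$ and $\sigma_1 f$ extracts the individual Maurer--Cartan equation for each $\omega_f$ together with unit conditions relating $\omega_f$ to $\omega_{\id_i}$ and $\omega_{\id_j}$.

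Next I would invoke the already-established building-block cases: that $E^{\bt}_{\cD/\cB}(\id_D)$ governs deformations of an object $D\in\cD(k)$, and that, given deformations of source and target, $E^{\bt}_{\cD/\cB}(f)$ governs deformations of a morphism $f$ above them (both following from \cite{paper2} \S \ref{paper2-gensdc} together with Lemma \ref{cosimplicialE}). Applied to an MC element $\omega$, each $\omega_{\id_i}$ yields a deformation $\widetilde D_i$ of $\bD(i)$, and each $\omega_f$ then yields a lift $\widetilde{\bD}(f)\colon\widetilde D_i\to\widetilde D_j$ of $\bD(f)$.

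The main obstacle, and the key new content, is to verify that the cross-term identity $\omega_{f_1}*\omega_{f_2}=\pd^1\omega_{f_2\circ f_1}$ is equivalent to the composition law $\widetilde{\bD}(f_2)\circ\widetilde{\bD}(f_1)=\widetilde{\bD}(f_2\circ f_1)$ in $\cD(A)$. For this I would exploit the fact that the $\Set^{\Delta_{**}}$-enrichment of Proposition \ref{enrichtopbot} strictly encodes composition in $\cD(A)$, and that $\pd^1$ applied to the MC element attached to a morphism $\phi$ records the image of $\phi$ under this enrichment; the MC identity then becomes the statement that $f\mapsto\widetilde{\bD}(f)$ is functorial. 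Once this is in place, MC elements correspond bijectively to functors $\widetilde{\bD}\colon\bI\to\cD(A)$ lifting $\bD$, and the gauge action of $E^0(A)$ unpacks coordinate-wise into conjugation of each $\widetilde{\bD}(f)$ by the pair $(g_i,g_j)$, which is exactly a natural isomorphism of such lifts; this yields the desired equivalence of groupoids functorially in $A$.
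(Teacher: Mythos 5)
Your proposal follows the same route as the paper: the paper's entire proof is a citation of the companion result (\cite{monad} Lemma \ref{monad-sdcdiagramsub}), which characterises objects of $\Def_{E^{\bt}_{\cD/\cB}(\bD)}$ as $\bI$-diagrams in a category equivalent to $\cD(A)$, and your coordinate-wise unpacking is precisely a sketch of what that lemma must establish. Your decomposition of $E^1(\bD)$ and $E^0(\bD)$, the reading of the Maurer--Cartan equation at non-degenerate $2$-simplices (composition), at $\sigma_0 f$, $\sigma_1 f$ (unit/morphism conditions) and at doubly degenerate simplices (the MC equation for each object), and the identification of the gauge action $(g*\omega*g^{-1})_f = g_{i}*\omega_f*g_{j}^{-1}$ with natural isomorphisms are all correct.

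Two points in the step you rightly single out as the key content are under-justified. First, the claim that the $\Set^{\Delta_{**}}$-enrichment of Proposition \ref{enrichtopbot} ``strictly encodes composition in $\cD(A)$'' is not right as stated: that enrichment encodes composition in $\cB(A)$ via $\Hom_{\cB}(\toph^n B,\botv^n B')$, and relating it to composition in $\cD(A)$ is exactly what has to be proved. The honest argument identifies morphisms of bialgebras $D\to D'$ lying over $f$ with those elements of $E^1(f)$ satisfying the unit conditions $\omega_{\id}*\phi=\pd^1\phi=\phi*\omega_{\id}'$ (this bijection uses that the algebra structure map $\toph B\to B$ is a split epimorphism and the coalgebra structure map $B'\to\botv B'$ a split monomorphism, plus distributivity), and then checks by a diagram chase that under this bijection $\phi*\psi=\pd^1\chi$ holds iff $\chi$ corresponds to the composite morphism, using that $\pd^1$ is injective on the relevant subset (e.g.\ via $\sigma^1\pd^1=\id$). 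Second, a deformation of $\bD(i)$ in $\cD(A)$ has underlying $\cB(A)$-object only \emph{isomorphic} to the chosen lift $B_i$, not equal to it; so one obtains an equivalence, not an isomorphism, of groupoids, after trivialising each underlying object --- this is why the paper's citation speaks of a category \emph{equivalent} to $\cD(A)$. Both points are inherited from the single-object and single-morphism cases you invoke, so the gap is one of precision rather than of strategy, but they are where the actual work of the cited lemma lies.
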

\begin{proof}
This follows immediately from \cite{monad} Lemma \ref{monad-sdcdiagramsub}, which characterises objects of $\Def_E$ as diagrams from $\bI$ to a category equivalent to $\cD(A)$.
\end{proof}

\begin{lemma}\label{extcohocalc}
Given a diagram $\bD:\bI \to \cD(k)$, the cohomology groups $\H^*(E^{\bt}_{\cD/\cB}(\bD))$ are given by hypercohomology of the bicomplex
$$
\prod_{i \in \Ob \bI} \CC^{\bt}_{\cD/\cB}(\bD(i)) \xra{f_*-f^*} 
\ldots \to \prod_{\substack{i_0 \xra{f_1} i_1 \xra{f_2} \ldots \xra{f_n} i_n\\ \text{in }N_n\bI}} \CC^{\bt}_{\cD/\cB}(\bD(f_n\circ f_{n-1}\circ \ldots f_0))) \to \ldots,
$$ 
where $N_n\bI \subset B_n\bI$ consists of non-degenerate simplices, or equivalently strings of non-identity morphisms. 
\end{lemma}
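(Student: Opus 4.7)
The plan is to realise $\CC^{\bt}(E^{\bt}_{\cD/\cB}(\bD))$ as the diagonal of a bicosimplicial $k$-vector space $K^{\bt,\bt}$, then invoke Eilenberg--Zilber to pass from the diagonal to the total complex, and finally normalise in the nerve direction to cut down to $N\bI$.

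Concretely, I would set
\[
K^{p,q} := \prod_{\sigma \in B\bI_p} \CC^q_{\cD/\cB}(\bD(\pd_1^{p-1}\sigma)),
\]
equipping it with the internal cosimplicial structure of $\CC^\bt_{\cD/\cB}$ from Lemma \ref{cosimplicialE} in the vertical direction ($q$), and horizontal cofaces ($p$) defined by $d^j_h(e)(\tau) = e(\pd_j \tau)$ for $1 \le j \le p$, with $d^0_h$ and $d^{p+1}_h$ given by left and right multiplication with the canonical Maurer--Cartan elements attached to the end morphisms of $\tau \in B\bI_{p+1}$. These end-multiplications produce exactly the pullback and pushforward maps $f^*$ and $f_*$ between the relevant $\CC^\bt_{\cD/\cB}$-complexes. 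Bicosimpliciality of $K$ (compatibility of horizontal with vertical cofaces) follows from associativity of the $\Set^{\Delta_{**}}$-multiplication of Proposition \ref{enrichtopbot} together with the cosimplicial identities for $\CC^\bt_{\cD/\cB}$.

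Comparing Definitions \ref{powersdc} and \ref{sdcdiagram} with the diagonal $\diag(K)^n = K^{n,n}$, I would then identify $\CC^\bt(E^\bt_{\cD/\cB}(\bD)) = \diag(K)$ as cochain complexes. The (dual) Eilenberg--Zilber theorem yields a natural quasi-isomorphism $\diag(K) \simeq \Tot(K)$, so it remains to understand $\H^*(\Tot K)$. Dold--Kan normalisation in the horizontal direction replaces $B_\bt \bI$ by its subcomplex $N_\bt \bI$ of non-degenerate simplices, since the complement spans an acyclic subcomplex. The resulting normalised total complex is precisely the total complex of the bicomplex displayed in the statement, whose cohomology is by definition its hypercohomology.

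The main obstacle will be the diagonal identification $\CC^\bt(E^\bt_{\cD/\cB}(\bD)) = \diag(K)$: verifying that the SDC coface $\pd^i$ for $1 \le i \le n$, which acts simultaneously on the nerve index and on the internal degree, agrees on $K^{n,n}$ with the bicosimplicial diagonal face, and that $\pd^0$ and $\pd^{n+1}$ correspond to the extremal horizontal faces induced by Maurer--Cartan multiplication. Each piece is routine from the definitions, but requires careful bookkeeping of simplicial index shifts and of the interaction between the $\Set^{\Delta_{**}}$-enrichment and the face maps of $B\bI$.
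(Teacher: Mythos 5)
Your proposal is correct and follows essentially the same route as the paper: realise $\CC^{\bt}(E^{\bt}_{\cD/\cB}(\bD))$ as the diagonal of the bicosimplicial complex $[m,n]\mapsto\prod_{\sigma\in B_n\bI}\CC^m_{\cD/\cB}(\bD(\cdots))$, apply Eilenberg--Zilber to pass to the total complex of the binormalisation, and observe that normalising in the nerve direction replaces $B_n\bI$ by $N_n\bI$. You merely spell out the bicosimplicial structure and the diagonal identification in more detail than the paper, which asserts them directly.
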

\begin{proof}
Since $\CC^{\bt}(E^{\bt}_{\cD/\cB}(\bD))$ is the diagonal of the bicosimplicial complex
$$
[m,n]\mapsto  \prod_{\substack{i_0 \xra{f_1} i_1 \xra{f_2} \ldots \xra{f_n} i_n\\ \text{in }\bI}}\CC^{m}_{\cD/\cB}(\bD(f_n\circ f_{n-1}\circ \ldots f_0)), 
$$
the Eilenberg-Zilber theorem implies that it is homotopy equivalent to the total complex of the associated binormalised complex. The vertical normalisation is just given by replacing $B_n\bI$ with $N_n \bI$. 
\end{proof}

\begin{example}\label{extscheme}
If we define $\cD$ and $\cB$ as in Example \ref{keyegs}.\ref{keyeg}, then the category of flat schemes over $A$ is a full subcategory of $\cD(A)$, closed under deformations. Therefore Theorem \ref{diagramdeform} constructs an SDC governing deformations of a diagram of schemes.

For a morphism  $f:(X, \O_X) \to (Y, \O_Y)$ in $\cD(k)^{\op}$, the reasoning of \cite{paper2} \S \ref{paper2-scheme} adapts to show that
$$
\Ext^*_{\cD/\cB}(f)= \EExt^*_{\O_Y}( \oL_{\bt}^{Y/k}, \oR f_* \O_X)= \EExt^*_{\O_X} (f^*\oL_{\bt}^{Y/k},\O_X),
$$
where $\oL_{\bt}^{Y/k}$ is the cotangent complex of \cite{Ill1}.
\end{example}

\subsection{Constrained deformations}\label{constrain}

We now consider a generalisation of \S \ref{diagram}, by taking a small diagram 
$$
\bD: \bI\to \cD(k), 
$$ 
 a subcategory  $\bJ \subset \bI$, and $\widetilde{\bD|_{\bJ}}:\bJ \to \cD(\L)$ lifting $\bD|_{\bJ}$. We wish to describe deformations of $\bD$ which agree with $\widetilde{\bD|_{\bJ}}$ on $\bJ$. Note that when $\bI=(0 \to 1)$ and $\bJ=\{1\}$, this is the type of problem considered in  \cite{cones} and \cite{ranlie}.

\begin{proposition}\label{diagramdefs}
Given a $\bI$-diagram $\bD:\bI \to \cD(k)$,  with $\widetilde{\bD|_{\bJ}}$ as above,  the groupoid of deformations of $\bD$ fixing $\widetilde{\bD|_{\bJ}}$ is governed by the SDC
$$
E^{\bt}_{\cD/\cB}(\bD)\by_{E^{\bt}_{\cD/\cB}(\bD|_{\bJ} )}\bt,
$$
where $
\bt \to E^{\bt}_{\cD/\cB}(\bD|_{\bJ} )
$
is defined by the object of $\mc(E^{\bt}_{\cD/\cB}(\bD|_{\bJ} )) $ corresponding to $\widetilde{\bD|_{\bJ}} $. 
\end{proposition}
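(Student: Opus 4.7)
The plan is to identify the displayed fibre product directly as an SDC whose Maurer-Cartan groupoid realises the constrained deformation problem, then apply Theorem \ref{diagramdeform} twice (once to $\bI$ and once to $\bJ$) to read off the content. First I would construct the restriction morphism
$$
r\co E^{\bt}_{\cD/\cB}(\bD) \to E^{\bt}_{\cD/\cB}(\bD|_{\bJ}).
$$
Since the target is indexed in degree $n$ by $B\bJ_n \subset B\bI_n$, this is simply projection onto the factors supported on $\bJ$-strings. One checks that $r$ is a morphism of SDCs directly from the formulae of Definition \ref{powersdc}: every operation ($\pd^i$, $\sigma^i$, and the product $*$) acts factor-by-factor on strings, and because $\bJ$ is a subcategory the face and degeneracy maps preserve the property of all composable arrows lying in $\bJ$. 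Levelwise smoothness of $r$ follows by choosing arbitrary lifts in the factors indexed by $B\bI_n \setminus B\bJ_n$, using smoothness of each component SDC.

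Next, by Theorem \ref{diagramdeform} applied to $\bJ$, the lift $\widetilde{\bD|_{\bJ}}$ is equivalent data to an element of $\mc(E^{\bt}_{\cD/\cB}(\bD|_{\bJ}))(\L)$, and this provides the canonical morphism $\bt \to E^{\bt}_{\cD/\cB}(\bD|_{\bJ})$ appearing in the fibre product. Since fibre products of smooth left-exact functors along smooth maps are smooth and left-exact, and the SDC operations commute with pullback, the fibre product
$$
\widetilde{E}^{\bt} := E^{\bt}_{\cD/\cB}(\bD)\by_{E^{\bt}_{\cD/\cB}(\bD|_{\bJ})}\bt
$$
inherits the structure of an SDC, with operations induced levelwise from those of $E^{\bt}_{\cD/\cB}(\bD)$.

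Finally, I would unpack the resulting deformation groupoid. For $A \in \C_{\L}$, an element of $\mc(\widetilde{E}^{\bt})(A)$ is an MC element of $E^{\bt}_{\cD/\cB}(\bD)(A)$ whose image under $r$ agrees with the canonical MC corresponding to the base change of $\widetilde{\bD|_{\bJ}}$ to $A$. By Theorem \ref{diagramdeform} such data are precisely deformations of $\bD$ to $\cD(A)$ whose restriction to $\bJ$ is the prescribed $\widetilde{\bD|_{\bJ}}\ten_{\L}A$; the $\widetilde{E}^0$-action likewise picks out exactly the gauge equivalences restricting to the identity on $\widetilde{\bD|_{\bJ}}$, producing the constrained deformation groupoid.

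The main obstacle will be the explicit verification that $r$ respects the $*$-product of Definition \ref{powersdc}: this product mixes degrees via the face operators $(\pd_{m+1})^n$ and $(\pd_0)^m$ on strings, so one must check that the face maps on the nerve preserve $B\bJ_{\bt}$ as a simplicial subset of $B\bI_{\bt}$ — which holds because $\bJ$ is a subcategory of $\bI$ — so that the restricted product closes up on the $B\bJ_{\bt}$-indexed factors.
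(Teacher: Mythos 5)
Your overall strategy — form the restriction morphism $r$, check the fibre product is an SDC, and unpack its Maurer--Cartan groupoid — is the same as the paper's, and your preliminary verifications (that $r$ respects the operations of Definition \ref{powersdc} because $B\bJ_{\bt}$ is a simplicial subset of $B\bI_{\bt}$, and that the fibre product of smooth functors along a smooth map is smooth) are correct and are taken for granted in the paper. But there is a genuine gap at the final step. What you compute is the \emph{strict} fibre product of groupoids
$\Def(E^{\bt}_{\cD/\cB}(\bD)) \by_{\Def(E^{\bt}_{\cD/\cB}(\bD|_{\bJ}))} \{\widetilde{\bD|_{\bJ}}\}$: MC elements mapping \emph{exactly} to the canonical MC element of $\widetilde{\bD|_{\bJ}}$, and gauge transformations restricting \emph{exactly} to the identity. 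The constrained deformation groupoid, however, is only well-defined up to equivalence as the $2$-fibre (homotopy fibre) product $\Def(E^{\bt}_{\cD/\cB}(\bD)) \by^h_{\Def(E^{\bt}_{\cD/\cB}(\bD|_{\bJ}))} (\{\widetilde{\bD|_{\bJ}}\},\id)$ — a deformation of $\bD$ together with an isomorphism of its restriction to $\bJ$ with $\widetilde{\bD|_{\bJ}}$ — precisely because Theorem \ref{diagramdeform} only identifies $\Def$ with the geometric deformation groupoid up to equivalence, so nothing forces the restricted MC element to be on the nose the canonical one.

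The identification of the strict fibre product with the $2$-fibre product is the one substantive point in the paper's proof, and it is exactly where smoothness enters: since $E^0_{\cD/\cB}(\bD)(A) \to E^0_{\cD/\cB}(\bD|_{\bJ})(A)$ is surjective, any deformation of $\bD$ whose restriction to $\bJ$ is merely gauge-equivalent to the canonical MC element can be adjusted by a lifted gauge transformation to land in the strict fibre (essential surjectivity), and morphisms lift similarly (fullness). Without this step your strict fibre product could a priori be a proper, non-equivalent subgroupoid of the constrained deformation groupoid, so you should add this lifting argument to close the proof.
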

\begin{proof}
By Theorem \ref{diagramdeform}, it suffices to show that 
$\Def(E^{\bt}_{\cD/\cB}(\bD)\by_{E^{\bt}_{\cD/\cB}(\bD|_{\bJ} )}\bt) $ is equivalent to the $2$-fibre product
$$
\Def(E^{\bt}_{\cD/\cB}(\bD)) \by_{\Def(E^{\bt}(\bD|_{\bJ}))}^h (\{\widetilde{\bD|_{\bJ}} \}, \id).
$$

We know that the functor $\mc$ preserves inverse limits, so 
$$
\mc(E^{\bt}(\bD)\by_{E^{\bt}(\bD|_{\bJ} )}\{\widetilde{\bD|_{\bJ}} \} )= \mc(E^{\bt}_{\cD/\cB}(\bD)) \by_{\mc(E^{\bt}(\bD|_{\bJ}))}  \{\widetilde{\bD|_{\bJ}} \}.
$$

Since $E^0(\bD)(A) \to E^{0}(\bD|_{\bJ} )(A)$ is also surjective (by smoothness), we see that 
$$
\Def(E^{\bt}(\bD)\by_{E^{\bt}(\bD|_{\bJ} )}\{\widetilde{\bD|_{\bJ}} \} )\simeq \Def(E^{\bt}_{\cD/\cB}(\bD)) \by_{\Def(E^{\bt}(\bD|_{\bJ}))}^h  (\{\widetilde{\bD|_{\bJ}} \}, \id).
$$
\end{proof}

\begin{example}\label{overscheme}
Given a morphism $f:X \to Y$ of schemes over $k$, and a flat formal deformation $\fY$ of $Y$ over $\L$, 
we may consider deformations  of $X$ over $\fY$, or equivalently deformations of this diagram fixing $\fY$. Define the diagram $\bD$ to be $f^{\sharp}:( \O_Y, Y)\to (\O_X, X)$ in the category $\cD(k)$  of  example \ref{keyegs}.\ref{keyeg}, and let $\widetilde{\bD|_{\bJ}}$ be the object $(\O_{\fY}, \fY)$ of $\cD(\L)$.  Proposition \ref{diagramdefs} then gives an SDC
$$
E:=E^{\bt}_{\cD/\cB}(\bD)\by_{E^{\bt}_{\cD/\cB}(\bD|_{\bJ} )}\bt
$$
governing this problem.

Lemma \ref{extcohocalc} implies that the tangent complex $\CC^{\bt}(E)$ is the mapping cone of $\CC^{\bt}_{\cD/\cB}(\O_X, X) \to \CC^{\bt}_{\cD/\cB}(f^{\sharp})$, so 
by Example \ref{extscheme}, the cohomology of this SDC is given by
$$
\H^*(E) \cong \EExt^*_{\O_X}(\cone( f^*\oL_{\bt}^{Y/k} \to \oL_{\bt}^{X/k}), \O_X)   \cong  \EExt^*_{\O_X}( \oL_{\bt}^{X/Y},\O_X). 
$$
\end{example}

\begin{example}
We could go further, and let $\bE$ be a diagram  $Z \xra{g} X \xra{f} Y$ over $k$, with a fixed formal deformation $\widetilde{gf}:\fZ \to \fY$ of $fg$ over $\L$. Governing this deformation problem, we get another SDC 
$$
F:= E^{\bt}_{\cD/\cB}(Z \xra{g} X \xra{f} Y)\by_{E^{\bt}_{\cD/\cB}(Z \xra{gf} Y)}\{ \fZ \xra{\widetilde{gf}} \fY\}.
$$

Now, $\CC^{\bt}(F)= \ker(\CC^{\bt}(E_{\cD/\cB}(\bE)) \to \CC^{\bt}(E_{\cD/\cB}(\bE|_{\bJ})))$, and 
Lemma \ref{extcohocalc} implies that 
$\CC^{\bt}(E_{\cD/\cB}(\bE))$ is  homotopy equivalent to the total complex of
$$
\CC^{\bt}_{\cD/\cB}(Z) \by \CC^{\bt}_{\cD/\cB}(X) \by \CC^{\bt}_{\cD/\cB}(Y) \to \CC^{\bt}_{\cD/\cB}(g) \by \CC^{\bt}_{\cD/\cB}(fg) \by \CC^{\bt}_{\cD/\cB}(f) \to \CC^{\bt}_{\cD/\cB}(fg),
$$
while $\CC^{\bt}(E_{\cD/\cB}(\bE|_{\bJ}))$ is  homotopy equivalent to the total complex of
$$
\CC^{\bt}_{\cD/\cB}(Z)  \by \CC^{\bt}_{\cD/\cB}(Y) \to   \CC^{\bt}_{\cD/\cB}(fg),
$$
so $\CC^{\bt}(F)$ is homotopy equivalent to the total complex of
$$
 \CC^{\bt}_{\cD/\cB}(X)  \to \CC^{\bt}_{\cD/\cB}(g)  \by \CC^{\bt}_{\cD/\cB}(f)\to \CC^{\bt}_{\cD/\cB}(fg).
$$

By Example \ref{extscheme}, 
\begin{eqnarray*}
\H^*\ker(\CC^{\bt}_{\cD/\cB}(X)\to \CC^{\bt}_{\cD/\cB}(f)) &\cong&\EExt^*_{\O_X}( \oL_{\bt}^{X/Y}, \O_X)\\
\H^*\ker(\CC^{\bt}_{\cD/\cB}(g)\to \CC^{\bt}_{\cD/\cB}(fg)) &\cong&\EExt^*_{\O_X}( \oL_{\bt}^{X/Y}, \oR g_*\O_Z),
\end{eqnarray*}
and these isomorphisms combine to give
$$
H^*(F) \cong  \EExt^*_{\O_X}( \oL_{\bt}^{X/Y},\cone (\O_X \to \oR g_*\O_Z)[-1]).
$$
Note that this more accurately captures the higher structure than the SDC of \cite{paper2} \S 3.3, whose cohomology had $g_*\O_Z$ in place of $\oR g_*\O_Z$ above.
\end{example}

\section{Extended deformation functors from SDCs}\label{extsdc}

Given an SDC $E$, the aim of this section is to extend the classical deformation groupoid $\Def_E:\C_{\L} \to \Grpd$ of \cite{paper2}  from $\C_{\L}$ to the whole of $s\C_{\L}$. Groupoids turn out to be too restrictive for our purposes, so we will define a simplicial set-valued functor functor $\ddef_E:s\C_{\L} \to \bS$ extending the classifying space $B\Def_E$ of the deformation groupoid. 

For a monad $\top$, the obvious extension of the functor describing deformations of a $\top$-algebra is the functor of deformations of a strong homotopy $\top$-algebra. Strong homotopy algebras were defined by Lada in \cite{loop} to characterise the structures arising on   deformation retracts of  $\top$-algebras in topological spaces, but the description works over any simplicial category. This motivates the following definition:

\begin{definition}\label{mcdef}
Given an SDC $E$, define the  Maurer-Cartan functor  $\mc_E:s\C_{\L}\to \Set$ by
$$
\mc_E(A)\subset \prod_{n\ge 0} E^{n+1}(A^{I^n}),
$$
consisting of those $\underline{\omega}$ satisfying:
\begin{eqnarray*}
\omega_m(s_1,\ldots, s_m)*\omega_n(t_1,\ldots, t_n)&=&\omega_{m+n+1}(s_1,\ldots, s_m,0,t_1,\ldots, t_n);\\ 
\pd^i\omega_n(t_1,\ldots,t_n)&=&\omega_{n+1}(t_1, \ldots,t_{i-1},1,t_i,\ldots,t_n);\\ 
\sigma^i\omega_n(t_1,\ldots,t_n)&=&\omega_{n-1}(t_1, \ldots,t_{i-1},\min\{t_i,t_{i+1}\},t_{i+2},\ldots, t_n);\\
\sigma^0\omega_n(t_1,\ldots,t_n)&=&\omega_{n-1}(t_2, \ldots,t_n);\\
\sigma^{n-1}\omega_n(t_1,\ldots,t_n)&=&\omega_{n-1}(t_1, \ldots,t_{n-1}),\\ 
\sigma^0\omega_0&=&1,
\end{eqnarray*}
where $I:=\Delta^1$. 
\end{definition}

\begin{remarks}\label{sdcexpln}
\begin{enumerate}
\item
One way to think of this construction is that, if we start with an element $\omega \in E^1$ such that $\sigma^0 \omega=1$, then there are $2^n$ elements generated by  $\omega$ in each $E^{n+1}$. To see  this correspondence,  take a vector in $\{0,1\}^n$, then  substitute ``$\omega*$'' for each $0$, and ``$\pd^1$'' for each $1$,  adding a final $\omega$. These elements will be at the vertices of an $n$-cube, and $\omega_n$ is then a homotopy between them.

\item  Lada's definition of a strong homotopy algebra differs slightly in that it omits all of the degeneracy conditions except $\sigma^0\omega_0=0$. Our choices are made so that we work with normalised, rather than unnormalised, cochain complexes associated to a cosimplicial complex. Since these are homotopy equivalent, both constructions will yield weakly equivalent deformation functors, even if we remove all degeneracy conditions. 

\item In \cite{monad} Proposition \ref{monad-rhommc} it is shown that $\mmc$ has a precise homotopy-theoretical interpretation as the derived functor associated to the functor sending an SDC $E$ and $A \in \C_{\L}$ to the set $\mc_E(A)$ from Definition \ref{sdcmcdef}. In the scenario of \S\ref{constructsdc},  it follows from the results of \cite{monad} that for $A \in s\C_{\L}$,  $\mc_E(A)$ is the set of objects of the Segal space of strong homotopy bialgebras over the object being deformed.  
\end{enumerate}
\end{remarks}

\begin{proposition}\label{mcq}
$\mc_E:s\C_{\L} \to \Set$  is quasi-smooth. Moreover, if $f:E\to F$ is a morphism of SDCs  such that the maps $f^n:E^n \to F^n$ are smooth for all $n$, then 
$\mc_E\to \mc_F$  is quasi-smooth.
\end{proposition}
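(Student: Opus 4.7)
My plan is induction on $n \geq 0$, lifting the given $\omega_n \in E^{n+1}(B^{I^n})$ to some $\tilde\omega_n \in E^{n+1}(A^{I^n})$ in such a way that the collection $(\tilde\omega_n)_{n}$ continues to satisfy all of the equations of Definition \ref{mcdef}. The base case $n=0$ is immediate from the smoothness of $E^1$ (part of the SDC axioms of Definition \ref{sdcdef}) together with the normalisation $\sigma^0 \tilde\omega_0 = 1$.

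For the inductive step, I would observe that the MC equations pin down any candidate $\tilde\omega_n$ along the boundary of the cube $I^n$: the faces $t_i = 0$ are forced by equation (1) to be products of the lower $\tilde\omega_m$; the faces $t_i = 1$ are forced by equation (2) to be the cosimplicial cofaces $\partial^i \tilde\omega_{n-1}$; and the degeneracy equations (3)--(5) prescribe the image in the matching object $M^n E$ in terms of $\tilde\omega_{n-1}$. The inductive hypothesis renders this prescribed boundary and matching data internally consistent and compatible with the reduction of $\omega_n$ to $B$. The lifting problem thus reduces to surjectivity of the natural map
\[
E^{n+1}(A^{I^n}) \longrightarrow \bigl(E^{n+1}(A^{\partial I^n}) \by_{E^{n+1}(B^{\partial I^n})} E^{n+1}(B^{I^n})\bigr) \by_{M^n E(B^{I^n})} M^n E(A^{I^n}).
\]

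To establish this surjectivity, I would combine the smoothness of $E^{n+1}$ with the smoothness of $\underline{\sigma}: E^{n+1} \to M^n E$ from Proposition \ref{sdcmatch}, using left-exactness of these functors to reduce to checking that $A^{I^n} \onto A^{\partial I^n} \by_{B^{\partial I^n}} B^{I^n}$ is a surjection in $\C_\L$. This surjection follows because $\partial I^n \into I^n$ is a cofibration of finite simplicial sets and $A \to B$ is level-wise surjective in $s\C_\L$; the acyclicity of $A \to B$ is precisely what ensures that the kernel of this map in $\C_\L$ remains an (acyclic) small extension to which the smoothness of $E^{n+1}$ and of $\underline{\sigma}$ can be applied.

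The relative claim $\mc_E \to \mc_F$ follows by the same induction, replacing the smoothness of $E^{n+1}$ with the assumed smoothness of $f^{n+1}: E^{n+1} \to F^{n+1}$ (and the induced smoothness on matching objects) to produce the required element of $\mc_E(A)$ lying over a given compatible pair in $\mc_E(B) \by_{\mc_F(B)} \mc_F(A)$. I expect the main obstacle to be the bookkeeping required to verify that the cube boundary data, the multiplication identities and the matching-object degeneracies at stage $n$ genuinely assemble into compatible input for the lifting problem; this should follow from the MC relations holding over $B$ together with the consistency of the previously constructed $\tilde\omega_m$.
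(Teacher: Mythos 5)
Your strategy---inducting on $n$ and lifting each $\omega_n$ through a single lifting problem combining the cube boundary with the matching object---is the natural one, and is essentially what the paper delegates to the cited lemma of the companion paper. But there is a genuine error at the one point where quasi-smoothness, as opposed to smoothness, is decided. The surjectivity of $A^{I^n} \to A^{\pd I^n}\by_{B^{\pd I^n}} B^{I^n}$ does \emph{not} follow from ``$\pd I^n \into I^n$ is a cofibration and $A \to B$ is level-wise surjective''. Take $n=1$, $A = k[\eps]$ regarded as a constant simplicial algebra and $B=k$: then $A^{I}=A$ (every path in a discrete object is constant), while $A^{\pd I}\by_{B^{\pd I}}B^{I} = A\by_k A$, and the map in question is the diagonal $k[\eps]\to k[\eps]\by_k k[\eps]$, which misses $(0,\eps)$. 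This is not a removable defect of the example: the only place your induction distinguishes acyclic small extensions from arbitrary ones is this surjectivity, so if it held for every surjection the same argument would make $\mc_E(A)\to\mc_E(B)$ surjective for \emph{all} small extensions; restricting to constant simplicial algebras, where $\mc_E$ reduces to the classical Maurer--Cartan functor of Definition \ref{sdcmcdef}, this would say that every deformation problem governed by an SDC is unobstructed, which is false.

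The repair is to let acyclicity do the work you have assigned to level-wise surjectivity: an acyclic surjection in $s\C_{\L}$ is a trivial fibration of the underlying simplicial modules, so by SM7 the pullback-power map $A^{I^n} \to A^{\pd I^n}\by_{B^{\pd I^n}} B^{I^n}$ along the cofibration $\pd I^n\into I^n$ is again a trivial fibration, in particular a surjection in $\C_{\L}$, to which the smoothness of $E^{n+1}$ and of $\underline{\sigma}$ (Proposition \ref{sdcmatch}) can then be applied via left-exactness. Acyclicity is needed to produce the surjection at all, not merely (as you assert) to keep its kernel an acyclic small extension. Two smaller points: the target of your displayed lifting map should also be fibred over $M^nE(A^{\pd I^n})$, since the boundary lift and the matching lift must agree there; and one should check (it is true, but needs the decomposition $C^n=N^n_c\oplus D^n$ from the proof of Proposition \ref{sdcmatch}) that restriction to the boundary and $\underline{\sigma}$ are \emph{jointly} smooth, not just separately so. With these adjustments your induction, including the relative version for $f:E\to F$, goes through.
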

\begin{proof}
This follows immediately from \cite{monad} Lemma \ref{monad-mcqsub}.
\end{proof}

\begin{definition}\label{ddefdef} 
By \cite{paper1} Lemma 1.5, $E^0$ is a group, which we denote by $G_E$.
 Observe that $G_E$ acts on $\mc_E$ by $(g,\omega)\mapsto g*\omega*g^{-1}$. We now define the deformation functor $\ddef_E:s\C_{\L}\to \bS$ by $\ddef_E:=[\underline{\mc}_E/\underline{G}_E]$, for $\uline{X}$ as in Definition \ref{underline}, and $[-,-]$  the homotopy quotient of \S \ref{quotsp}.
\end{definition}

\begin{proposition}
If $A \in \C_{\L}$, then $\ddef_E(A)$ is just the classifying space $B\Def_E(A) \in \bS$ of the deformation groupoid $\Def_E(A)$ from Definition \ref{sdcmcdef}.
\end{proposition}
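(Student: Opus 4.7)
The plan is to evaluate $\ddef_E(A) = [\underline{\mc}_E/\underline{G}_E](A)$ for $A\in\C_{\L}$ viewed as a constant object of $s\C_{\L}$, and to recognise the result as the nerve of the action groupoid $\Def_E(A)$.

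First I would unpack $\underline{F}(A)$ for constant $A$. Combining Definition~\ref{underline} with the formula for $A^K$, observe that $\Hom_{\bS}(K\times\Delta^i,A) = \Hom_{\Set}(\pi_0(K\times\Delta^i),A) = A$ whenever $A$ is constant and $K\times\Delta^i$ is connected; the fibre product over $k$ is then trivial, so $A^K = A$. Taking $K = \Delta^n$ gives that $\underline{\mc}_E(A)$ and $\underline{G}_E(A)$ are the constant simplicial objects at $\mc_E(A)$ and $E^0(A) = G_E(A)$ respectively.

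Next I would unwind Definition~\ref{mcdef} in this setting. Since $I^n$ is connected, the same collapse gives $A^{I^n} = A$, so each $\omega_n$ is simply an element of $E^{n+1}(A)$ and the parameter slots $(t_1,\ldots,t_n)$ become vacuous. The first two relations of Definition~\ref{mcdef} then read $\omega_m*\omega_n = \omega_{m+n+1}$ and $\pd^i\omega_n = \omega_{n+1}$ for $1\le i\le n+1$, so by induction $\omega_n = (\pd^1)^n\omega$ for $\omega := \omega_0\in E^1(A)$, and the case $m = n = 0$ gives the Maurer--Cartan equation $\omega*\omega = \pd^1\omega$. The remaining degeneracy clauses reduce, via standard cosimplicial identities, to the normalisation $\sigma^0\omega = 1$. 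Thus $\mc_E(A)$ from Definition~\ref{mcdef} coincides as a set with $\mc_E(A)$ from Definition~\ref{sdcmcdef}.

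Finally, combining these identifications, $\ddef_E(A) = [X/G]$ where $X = \mc_E(A)$ and $G = E^0(A)$ are constant simplicial objects with $G$ acting by conjugation. The explicit formulas in \S\ref{quotsp} give $\ddef_E(A)_n = X\times G^n$ with face and degeneracy maps that are visibly those of the nerve of the action groupoid of $G$ acting on $X$; this is by definition the classifying space $B\Def_E(A)$. The main bookkeeping step is the reduction in Definition~\ref{mcdef}: verifying that the apparently rich tower of homotopies collapses cleanly to a single classical MC element once the parameter spaces $I^n$ trivialise. Once that is checked, identification of the homotopy quotient with the nerve of the action groupoid is formal.
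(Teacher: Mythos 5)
Your proposal is correct and follows essentially the same route as the paper's own (much terser) proof: observe that $A^K=A$ for connected $K$ when $A\in\C_{\L}$, so the tower of homotopies collapses, the Maurer--Cartan data reduce to a single $\omega\in E^1(A)$ with $\omega*\omega=\pd^1\omega$ and $\sigma^0\omega=1$, and the homotopy quotient of constant simplicial objects is the nerve of the action groupoid. The only cosmetic difference is that you express the higher terms as $\omega_n=(\pd^1)^n\omega_0$ via the coface relations where the paper writes $\omega_n=\omega_0^{*(n+1)}$ via the product relation; these agree by the SDC axioms once the Maurer--Cartan equation holds.
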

\begin{proof}
Take $\uline{\omega} \in \mc_E(A)$. Since  $A \in \C_{\L}$,   $A^K=A$ for all connected simplicial sets $K$, so $E^{n+1}(A^{I^n})= E^{n+1}(A)$, and $\omega_n =\omega_0^{*(n+1)} $, with the Maurer-Cartan relations reducing to
$$
\pd^1\omega_0=\omega_0*\omega_0 \quad \sigma^0\omega_0=1.
$$
These are precisely the conditions defining the Maurer-Cartan space $\mc_E(A)$ of Definition \ref{sdcmcdef} , and $\Def_E(A)$ is the groupoid given by the action of $E^0(A)$ on $\mc_E(A)$, as required. 
\end{proof}

\begin{proposition}\label{mcs}
The functor $\ddef_E$ is quasi-smooth. More generally, if $f:E\to F$ is a morphism of SDCs, such that $f^n:E^n \to F^n$ is smooth for all $n$, then
 $\ddef_E\to \ddef_F$ is quasi-smooth.
\end{proposition}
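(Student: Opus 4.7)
The plan is to derive both claims from the construction $\ddef_E = [\underline{\mc}_E/\underline{G}_E]$ by feeding the requisite inputs into the homotopy-quotient machinery of \S\ref{quotsp}.

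For the absolute statement, by Proposition \ref{mcq} the functor $\mc_E \in c\Sp$ is quasi-smooth, so by Lemma \ref{settotop} the functor $\underline{\mc}_E \in sc\Sp$ is quasi-smooth. The SDC axioms force each $E^n$ to be smooth, in particular $G_E = E^0$ is smooth, and Lemma \ref{settotop} again promotes this to smoothness of $\underline{G}_E$. Corollary \ref{fibquot} then immediately yields that $\ddef_E = [\underline{\mc}_E/\underline{G}_E]$ is quasi-smooth.

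For the relative statement, the hypothesis together with Proposition \ref{mcq} and Lemma \ref{settotop} yields that $\underline{\mc}_E \to \underline{\mc}_F$ is quasi-smooth, while the smoothness of $f^0:E^0 \to F^0$ and Lemma \ref{settotop} give that $\underline{G}_E \to \underline{G}_F$ is smooth. I factor the induced map of homotopy quotients as
\[
\ddef_E = [\underline{\mc}_E/\underline{G}_E] \;\to\; [\underline{\mc}_F/\underline{G}_E] \;\to\; [\underline{\mc}_F/\underline{G}_F] = \ddef_F,
\]
where $\underline{G}_E$ acts on $\underline{\mc}_F$ through $\underline{G}_E \to \underline{G}_F$. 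For the first arrow, a direct unwinding of the $W$-construction shows that the common group factors $\underline{G}_E(A)_i$ cancel in the relevant pullback, so the claim reduces to applying the (level-wise) lemma preceding Corollary \ref{fibquot}, namely that $[-/G]$ preserves fibrations and trivial fibrations of $G$-spaces, to the quasi-smooth $\underline{G}_E$-equivariant map $\underline{\mc}_E \to \underline{\mc}_F$. For the second arrow, I identify $[\underline{\mc}_F/\underline{G}_E]$ with the pullback $\ddef_F \times_{\bar{W}\underline{G}_F} \bar{W}\underline{G}_E$ (using that the universal $\underline{G}_E$-bundle is the pullback of the universal $\underline{G}_F$-bundle along $\underline{G}_E \to \underline{G}_F$), so that this arrow is the pullback of $\bar{W}\underline{G}_E \to \bar{W}\underline{G}_F$ along $\ddef_F \to \bar{W}\underline{G}_F$. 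The map $\bar{W}\underline{G}_E \to \bar{W}\underline{G}_F$ is smooth by the relative analogue of the lemma that $\bar{W}G$ is smooth for $G$ smooth (via Corollary V.6.9 of \cite{sht} applied to the smooth map $\underline{G}_E \to \underline{G}_F$), and Lemma \ref{basechange} then gives quasi-smoothness of the pullback. Composing the two quasi-smooth maps proves $\ddef_E \to \ddef_F$ is quasi-smooth.

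The main obstacle is the bookkeeping for the two-step factorization, specifically verifying the pullback identification $[\underline{\mc}_F/\underline{G}_E] \cong \ddef_F \times_{\bar{W}\underline{G}_F} \bar{W}\underline{G}_E$ and justifying the relative form of the ``$[-/G]$ preserves fibrations'' statement at the level of $sc\Sp$. Both steps are essentially exercises in unwinding the $W$-construction and become routine once one observes that in each simplicial degree everything is expressed as an explicit product of factors coming from $\underline{\mc}$ and $\underline{G}$.
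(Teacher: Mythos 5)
Your proof is correct and follows the same route as the paper, whose entire proof of this proposition is the single sentence that it follows immediately from Corollary \ref{fibquot}. Your two-step factorization through $[\underline{\mc}_F/\underline{G}_E]$ simply supplies the relative version of that corollary which the paper leaves implicit, and both steps (the level-wise application of ``$[-/G]$ preserves fibrations'' and the base-change along $\bar{W}\underline{G}_E \to \bar{W}\underline{G}_F$) check out.
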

\begin{proof}
This follows immediately from Corollary \ref{fibquot}.
\end{proof}

\begin{proposition}\label{mch}
The cohomology groups $\H^j(\ddef_E)$ are isomorphic to the groups $\H^{j+1}(E)$ from Definition \ref{sdccoho}.
\end{proposition}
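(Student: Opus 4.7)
The plan is to use the fibration sequence $\underline{\mc}_E \to \ddef_E \to \bar{W}\underline{G}_E$ arising from Definition \ref{ddefdef} and the homotopy quotient construction of \S\ref{quotsp}. All three terms are quasi-smooth (Propositions \ref{mcq}, \ref{mcs}, and the smoothness lemma for $\bar{W}$), so Proposition \ref{longexact} supplies a long exact sequence
$$\cdots \to \H^j(\underline{\mc}_E) \to \H^j(\ddef_E) \to \H^j(\bar{W}\underline{G}_E) \to \H^{j+1}(\underline{\mc}_E) \to \cdots.$$

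I would first compute $\H^*(\bar{W}\underline{G}_E)$. The smooth group-valued functor $G_E = E^0$ has tangent space $\CC^0(E)$ sitting in cosimplicial degree $0$, so $\H^0(\underline{G}_E) = \CC^0(E)$ and all other cohomology vanishes. The shift induced by the contractible fibration $\underline{G}_E \to W\underline{G}_E \to \bar{W}\underline{G}_E$ then gives $\H^{-1}(\bar{W}\underline{G}_E) = \CC^0(E)$ and $\H^j(\bar{W}\underline{G}_E) = 0$ otherwise.

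Next I would compute $\H^*(\underline{\mc}_E)$ by linearising the Maurer-Cartan relations of Definition \ref{mcdef} at the canonical basepoint. Writing each $\omega_n$ as a first-order perturbation and using $\epsilon^2 = 0$, the product relation $\omega_m * \omega_n = \omega_{m+n+1}(s,0,t)$ recovers exactly the face operators $\pd^0$ and $\pd^{n+1}$ that are missing from the bare SDC structure; combined with the explicit face and degeneracy conditions, the whole Maurer-Cartan system collapses to the cosimplicial cocycle equation $\sum_{i=0}^{n+1}(-1)^i \pd^i \eta_n = 0$ on the normalised cochains of $\CC^{\geq 1}(E)$. This yields $\H^j(\underline{\mc}_E) = \H^{j+1}(E)$ for $j \geq 1$, $\H^0(\underline{\mc}_E) = Z^1(\CC^{\bt}(E))$, and $\H^j(\underline{\mc}_E) = 0$ for $j \leq -1$.

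Finally, the long exact sequence assembles into the desired isomorphism. For $j \geq 1$ both neighbouring $\bar{W}\underline{G}_E$ terms vanish and $\H^j(\ddef_E) \cong \H^{j+1}(E)$; in the remaining range the sequence collapses to
$$0 \to \H^0(E) \to \H^{-1}(\ddef_E) \to \CC^0(E) \xra{d} Z^1(\CC^{\bt}(E)) \to \H^0(\ddef_E) \to 0,$$
with $d$ the cosimplicial differential, so $\H^{-1}(\ddef_E) = \H^0(E)$ and $\H^0(\ddef_E) = \H^1(E)$; below degree $-1$ all terms vanish. The main obstacle is the middle step --- rigorously unwinding the cubical Maurer-Cartan data to recover the full cosimplicial cocycle condition --- but the identification of $\mc_E$ with the object set of the Segal space of strong homotopy bialgebras (Remarks \ref{sdcexpln}) reduces this to a standard computation in \cite{monad}.
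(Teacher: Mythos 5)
Your overall strategy is sound and is a more explicit route than the paper takes: the paper's proof is a one-line citation of an external corollary in \cite{monad}, whereas you decompose $\ddef_E=[\underline{\mc}_E/\underline{G}_E]$ via the fibration sequence $\underline{\mc}_E\to\ddef_E\to\bar{W}\underline{G}_E$, compute the two outer terms, and assemble the answer with Proposition \ref{longexact}. The computation of $\H^*(\bar{W}\underline{G}_E)$ is correct (the tangent space of the smooth group functor $G_E=E^0$ is $\CC^0(E)$, delooped into degree $-1$), and the vanishing of $\H^{j}(\underline{\mc}_E)$ for $j<0$ is automatic because $\mc_E$ is set-valued. What your argument buys is transparency about where each cohomology group comes from; what it costs is that the entire substance of the proposition is concentrated in the step you defer, namely that linearising the cubical Maurer--Cartan system of Definition \ref{mcdef} yields $\H^j(\underline{\mc}_E)\cong\H^{j+1}(E)$ for $j\ge 1$ and $\H^0(\underline{\mc}_E)\cong Z^1$ of the normalised complex. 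That identification (checking that the first-order terms of $\omega_m*\omega_n=\omega_{m+n+1}(s,0,t)$ really do reconstruct the missing faces $\pd^0,\pd^{n+1}$ of $\CC^{\bt}(E)$, and that the cube coordinates contribute exactly a contractible direction in each level) is precisely the content of the result the paper cites, so as written your proof asserts rather than proves the main point.

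Two smaller issues. First, your five-term sequence is not what the long exact sequence gives: since $\H^{-1}(\underline{\mc}_E)=0$, it reads
$$
0 \to \H^{-1}(\ddef_E) \to \CC^0(E) \xra{d} Z^1(\CC^{\bt}(E)) \to \H^0(\ddef_E) \to 0,
$$
and $\H^0(E)$ appears only as $\ker d$, not as a separate term on the left. Second, to conclude $\H^{-1}(\ddef_E)\cong\H^0(E)$ and $\H^0(\ddef_E)\cong\H^1(E)$ you must identify the connecting map $\CC^0(E)\to Z^1(\CC^{\bt}(E))$ with the cosimplicial differential $\pd^0-\pd^1$; this comes from linearising the conjugation action $g\mapsto g*\omega*g^{-1}$ at the basepoint and should be stated explicitly, since without it the low-degree cases of the proposition are not established.
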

\begin{proof}
This follows immediately from \cite{monad} Corollary \ref{monad-cot3}.
\end{proof}

\subsection{Deformations of  morphisms}\label{morphisms}

The problem which we now wish to consider is that of deforming a morphism with fixed endpoints. Assume that we have a category-valued functor $\cD:\C_{\L} \to \Cat$. Fix objects $D,D'$ in  $\cD(\L)$,   
and a morphism $f$ in $\cD(k)$ from $D$ to $D'$. The deformation problem which we wish to consider is to describe, for each $A \in \C_{\L}$, the set of morphisms $f_A:D \to D'$ in $\cD(A)$ deforming $f$.
This amounts to taking the special case $\bI=(0 \to 1) $ and $\bJ=\{0,1\}$ in \S \ref{constrain}.

Now assume that we have a diagram of functors from $\C_{\L}$ to $\Cat$ as in \S \ref{constructsdc}, and consider the cosimplicial complex $F^{\bt}$ in $\Sp$ given by
$
F^{\bt}:= E_{\cD/\cB}^{\bt}(f)$ from  Lemma \ref{cosimplicialE}.

On $s\C_{\L}$, we now define a deformation functor 
$$
\ddef_F(A)\subset \prod_{n\ge 0} \underline{F}^{n}(A)^{\Delta^n},
$$
associated to $F$, to consist of those $\underline{\theta}$ satisfying:
\begin{eqnarray*}
\pd^i\theta_n &=& \eps_{n+1-i}^*\theta_{n+1}\\
\sigma^i\theta_n &=& \eta_{n-1-i}^*\theta_{n-1},
\end{eqnarray*}
for face maps $\eps_i:\Delta^n \to \Delta^{n+1}$ and degeneracy maps $\eta_i:\Delta^n \to \Delta^{n-1}$ defined as in \cite{W} Ch.8. N

\begin{proposition}
$\ddef_F$ is quasi-smooth, and $\H^i(\ddef_F)\cong \H^i(F)$.
\end{proposition}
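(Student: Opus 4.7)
The plan is to recognise $\ddef_F(A)$ as the totalisation $\mathrm{Tot}(\underline{F}(A))$ of the cosimplicial simplicial set $n \mapsto \underline{F}^n(A)$. Indeed, the defining identities
\begin{align*}
\pd^i\theta_n &= \eps_{n+1-i}^{\ast}\theta_{n+1},\\
\sigma^i\theta_n &= \eta_{n-1-i}^{\ast}\theta_{n-1}
\end{align*}
say precisely that $\underline{\theta}$ is a map of cosimplicial simplicial sets from the standard cosimplicial simplex $\Delta^{\bt}$ into $\underline{F}^{\bt}$, i.e.\ an element of the end defining $\mathrm{Tot}(\underline{F}(A))$. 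Once this identification is in hand, both assertions become instances of standard totalisation properties.

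For quasi-smoothness I would proceed in two stages. First, each component $F^n$ is smooth: it is constructed in Lemma \ref{cosimplicialE} as a fibre of the enriched hom-set of Proposition \ref{enrichtopbot}, whose smoothness follows from the uniform triviality of the deformation theory of $\cB$. By Lemma \ref{settotop}, each $\underline{F}^n \in sc\Sp$ is then smooth, and in particular quasi-smooth. Second, write $\ddef_F$ as the inverse limit of the tower $\ddef_F^{(n)} \to \ddef_F^{(n-1)}$ obtained by truncating the product at degree $n$; each transition map fits into a pullback square against the matching map $\underline{F}^n \to M^n\underline{F}^{\bt}$ pulled back along the cofibration $\pd\Delta^n \hookrightarrow \Delta^n$. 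Applying Lemma \ref{sm7} to the smooth functor $\underline{F}^n$ together with this cofibration shows that each such matching map is quasi-smooth, and Lemma \ref{basechange} propagates this along the tower; the inverse limit of quasi-smooth maps along surjective fibrations is again quasi-smooth, giving the claim.

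For the cohomology, the tangent complex of $\ddef_F$ at the Maurer-Cartan element determined by $f$ is, by the totalisation identification, the (homotopy) totalisation of the cosimplicial simplicial vector space $n \mapsto \CC^n(F)^{\Delta^n}$. Passing to normalised chains via the Dold-Kan correspondence and applying the Eilenberg-Zilber theorem converts this totalisation into the diagonal of the associated bicomplex, which is quasi-isomorphic to the cochain complex $\CC^{\bt}(F)$. Invoking the characterisation of $\H^\ast$ via the tangent functor from \S \ref{cohomology} then gives $\H^i(\ddef_F) \cong \H^i(\CC^{\bt}(F)) = \H^i(F)$.

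The main obstacle will be the Reedy-style inductive step: identifying the matching object $M^n\underline{F}^{\bt}$ correctly from the index-reversal in the face-map constraints, and then checking that the resulting matching map is quasi-smooth so that Lemma \ref{sm7} applies cleanly. Once that bookkeeping is done, both quasi-smoothness and the cohomology isomorphism follow from standard homotopy-limit arguments already codified in Lemmas \ref{sm7}, \ref{basechange}, and \ref{settotop}.
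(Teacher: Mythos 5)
Your proof is correct and follows essentially the same route as the paper: the paper simply identifies $\ddef_F(A)$ with $\Tot$ of the cosimplicial simplicial set $n\mapsto \underline{F}^n(A)$ and cites the fact that $\Tot$ is right Quillen (\cite{sht} \S VII.5), which is exactly the tower-of-partial-totalisations argument you spell out; the Reedy-fibrancy input you flag as the remaining bookkeeping (smoothness of the matching maps $\underline{F}^n \to M^n\underline{F}$) is supplied verbatim by the argument of Proposition \ref{sdcmatch}, which applies to any cosimplicial complex of smooth left-exact functors. The cohomology identification via normalisation and Eilenberg--Zilber is what the paper dismisses as ``straightforward,'' so no discrepancy there either.
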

\begin{proof}
The first statement follows from  \cite{sht} \S VII.5, which shows that the total space functor $\Tot$ from cosimplicial simplicial sets to simplicial sets is right Quillen. The description of cohomology is straightforward.
\end{proof}

\begin{proposition}\label{loopmor}
If $\bI$ is the category  $(0 \xra{m} 1)$,  let $\bD:\bI \to \cD(k)$ be the functor given by $\bD(0)=D, \bD(1)=D'$ and $\bD(m)=f$, then there is a canonical weak equivalence
$$
\ddef_F \simeq \ddef( E_{\cD/\cB}(\bD)\by_{ E_{\cD/\cB}(D) \by E_{\cD/\cB}(D')}\bt),
$$
where $\bt \to E_{\cD/\cB}(D) \by E_{\cD/\cB}(D')$ 
is defined by the object  $(D,D')\in\mc(E^{\bt}(D) \by E^{\bt}(D'))(\L) $.

Thus $\ddef_F$ governs deformations of $f$ which fix $D,D'$. 
\end{proposition}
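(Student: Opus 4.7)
The second assertion follows immediately from the weak equivalence together with Proposition \ref{diagramdefs} specialised to $\bI=(0\to 1)$, $\bJ=\{0,1\}$, for which the right-hand SDC is exactly the one governing deformations of $f$ with $D,D'$ held fixed. For the weak equivalence itself, write $E:=E_{\cD/\cB}(\bD)\by_{E_{\cD/\cB}(D)\by E_{\cD/\cB}(D')}\bt$. Both $\ddef_F$ and $\ddef_E$ are quasi-smooth --- the former by the preceding proposition, the latter by Proposition \ref{mcs} applied to $E\to\bt$, using that each $E^n$ is a fibre product of smooth functors --- so by Corollary \ref{weak} it suffices to construct a canonical morphism between them inducing isomorphisms on all cohomology groups.

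The cohomology matches. The preceding proposition gives $\H^j(\ddef_F)\cong\H^j(F)=\Ext^j_{\cD/\cB}(f)$, while Proposition \ref{mch} gives $\H^j(\ddef_E)\cong \H^{j+1}(E)$. Since the tangent-space functor preserves finite limits, $\CC^{\bt}(E)$ is the kernel of
\[
\CC^{\bt}(E_{\cD/\cB}(\bD))\lra \CC^{\bt}_{\cD/\cB}(D)\by \CC^{\bt}_{\cD/\cB}(D').
\]
For $\bI=(0\to 1)$, the nerve has non-degenerate simplices only in degrees $0$ and $1$, so Lemma \ref{extcohocalc} identifies $\CC^{\bt}(E_{\cD/\cB}(\bD))$ with the total complex of $\CC^{\bt}_{\cD/\cB}(D)\by\CC^{\bt}_{\cD/\cB}(D')\to \CC^{\bt}_{\cD/\cB}(f)$; the projection onto the column-$0$ summand has kernel quasi-isomorphic to $\CC^{\bt}_{\cD/\cB}(f)[-1]$, yielding $\H^{j+1}(E)\cong \H^j(F)$ as required.

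The main obstacle is constructing the canonical morphism realising this isomorphism. The plan is to unpack Definition \ref{mcdef} for $E$: a point of $\mc_E(A)$ is a system of homotopies $\omega_n\in E^{n+1}_{\cD/\cB}(\bD)(A^{I^n})$ whose restriction to $\bJ$ is the tautological MC data encoding $(D,D')$. Among the $B\bI_{n+1}$ factors of $E^{n+1}_{\cD/\cB}(\bD)$, exactly $n+1$ are indexed by nerve simplices traversing the morphism $m$; these contribute data valued in $F^{n+1}(A^{I^n})=E^{n+1}_{\cD/\cB}(f)(A^{I^n})$, and the Maurer--Cartan relations involving $\omega_D,\omega_{D'}$ on boundary faces collapse the $(n+1)$-fold information into a single coherent family. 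Pulling back along a standard inclusion $\Delta^n\hookrightarrow I^n$ (giving $A^{I^n}\to A^{\Delta^n}$) and using the degeneracy $\sigma^0$ to absorb one cosimplicial degree into the simplicial parameter produces an element of $\underline{F}^n(A)^{\Delta^n}$ satisfying the compatibilities defining $\ddef_F$. An analogous reduction for the $G_E$-action then yields the required natural transformation $\ddef_E\to \ddef_F$, which by the cohomology comparison above is a weak equivalence.
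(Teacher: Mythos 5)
Your overall strategy differs from the paper's: the paper deduces the equivalence directly from a lemma in the companion paper ([monad] Lemma \ref{monad-loopmorsub}), which establishes a canonical equivalence $\mmc(C)(A)\simeq \ddef_F(A)$ for $C:=E_{\cD/\cB}(\bD)\by_{E_{\cD/\cB}(D)\by E_{\cD/\cB}(D')}\bt$, and then simply notes that $C^0=1$ so that $\ddef(C)=\mmc(C)$. You instead propose to build a comparison morphism by hand and then invoke Corollary \ref{weak}. That route is legitimate in principle, and your cohomology computation via Lemma \ref{extcohocalc} (two-column bicomplex, kernel of the projection onto column $0$ being $\CC^{\bt}_{\cD/\cB}(f)$ shifted) is essentially correct, modulo the unstated point that the kernel computes the homotopy fibre because the restriction map is levelwise surjective by smoothness.

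The genuine gap is that the comparison morphism --- which you yourself identify as ``the main obstacle'' --- is never actually constructed. The passage from a Maurer--Cartan element $\underline{\omega}\in\prod_n C^{n+1}(A^{I^n})$ to an element $\underline{\theta}\in\prod_n\underline{F}^n(A)^{\Delta^n}$ satisfying the identities $\pd^i\theta_n=\eps_{n+1-i}^*\theta_{n+1}$ and $\sigma^i\theta_n=\eta_{n-1-i}^*\theta_{n-1}$ is precisely the nontrivial content here (it is what [monad] Lemma \ref{monad-loopmorsub} supplies), and phrases such as ``collapse the $(n+1)$-fold information into a single coherent family'' and ``using the degeneracy $\sigma^0$ to absorb one cosimplicial degree'' do not amount to a definition, let alone a verification of those identities. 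Moreover, even granting a morphism, Corollary \ref{weak} requires that this \emph{specific} morphism induce isomorphisms on $\H^*$; your ``cohomology comparison'' only exhibits an abstract isomorphism of graded vector spaces between the two sides, which is not sufficient without checking that the constructed map realises it on tangent spaces. Two smaller inaccuracies: a fibre product of smooth functors is not automatically smooth --- what makes $E^n$ smooth is that the restriction $E^n_{\cD/\cB}(\bD)\to E^n(D)\by E^n(D')$ is a smooth morphism, so its base change along $\bt$ is smooth; and there is no ``$G_E$-action'' to reduce, since $C^0=\bt$ forces the group to be trivial --- noticing this is exactly what lets the paper replace $\ddef(C)$ by $\mmc(C)$.
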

\begin{proof}
Let $C:= E_{\cD/\cB}(\bD)\by_{ E_{\cD/\cB}(D) \by E_{\cD/\cB}(D')}\bt$.
By \cite{monad} Lemma \ref{monad-loopmorsub}, there are  canonical equivalences $\mmc(C )(A)\simeq \ddef(F)(A)$, so we need only observe that $C^0=1$, so $\ddef(C) = \mmc(C)$. 
The final statement then follows from Proposition \ref{diagramdefs}.
\end{proof}

\subsubsection{Deforming identity morphisms}

If we now consider deformations of the morphism $\id_D:D \to D$, write $F$ for the cosimplicial complex $E_{\cD/\cB}^{\bt}(\id_D)$ governing deformations of $\id_D$, and $E$ for the SDC describing deformations of $D$, as defined in \cite{paper2} \S \ref{paper2-gensdc} (or just by taking the special case $\bI=\bt$ of Definition \ref{sdcdiagram}). Note that $E^n=F^n$, with the operations agreeing whenever they are defined on both. If we write $e:= \pd^01 \in F^1$, note that we also have $\pd^0f= e*f$ and $\pd^{n+1}f=f*e$ for $f \in F^n$.

This gives us an isomorphism $\CC^{\bt}(E)\cong \CC^{\bt}(F)$, and hence $\H^n(\ddef_E)=\H^{n+1}(E)\cong \H^{n+1}(\ddef_F)$. 

\begin{proposition}\label{loopmor2}  
Under the scenario above, the simplicial set  $\ddef_F(A)$ is weakly equivalent to the loop space $\Omega \ddef_E(A)$ of $\ddef_E(A)$ over the point $\omega_D \in \ddef_D(\L)$. This equivalence is  functorial in $A \in s\C_{\L}$
\end{proposition}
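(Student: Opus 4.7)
The plan is to combine Proposition \ref{loopmor} with an identification of $\ddef(E^{\Delta^1})$ as a path space for $\ddef_E$, exploiting the contractibility of $\Delta^1$. Applying Proposition \ref{loopmor} to $f=\id_D$, the resulting diagram $\bD:(0\to 1)\to \cD(k)$ is the constant diagram on $D$ with $\bD(m)=\id_D$. Comparing Definitions \ref{sdcdiagram} and \ref{powersdc} shows that the operations on $E_{\cD/\cB}(\bD)$ agree with those of the power SDC, so $E_{\cD/\cB}(\bD)\cong E^{\Delta^1}$ and $E_{\cD/\cB}(\bD|_{\bJ})\cong E^{\partial\Delta^1}=E\times E$. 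Thus
$$\ddef_F \simeq \ddef(C),\qquad C := E^{\Delta^1}\times_{E^{\partial\Delta^1}}\bt,$$
where the basepoint $\bt\to E\times E$ corresponds to $(\omega_D,\omega_D)$. Since $C^0=1$, the argument at the end of the proof of Proposition \ref{loopmor} gives $\ddef(C)=\mmc(C)$.

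Next I would realize $\mmc(C)$ as the homotopy fiber at $(\omega_D,\omega_D)$ of $\ddef(E^{\Delta^1})\to\ddef(E)^2$. The map $E^{\Delta^1}\to E^{\partial\Delta^1}$ is levelwise a projection $(E^n)^{n+2}\to (E^n)^2$, hence smooth, so Proposition \ref{mcs} makes $\ddef(E^{\Delta^1})\to\ddef(E)^2$ quasi-smooth and its ordinary fiber at $(\omega_D,\omega_D)$ computes the homotopy fiber. Left-exactness of $\mmc$ combined with the triviality of $G_C$ then yields a weak equivalence $\mmc(C)\xra{\sim} \ddef(E^{\Delta^1})\times_{\ddef(E)^2}\bt$.

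Finally, the collapse $\Delta^1\to \Delta^0$ induces an SDC map $\iota\co E\to E^{\Delta^1}$, and Lemma \ref{powercoho} shows $\iota$ is an isomorphism on cohomology (as $\Delta^1$ is contractible); by Proposition \ref{mch} and Corollary \ref{weak}, $\ddef(\iota)$ is a weak equivalence. Since the composite $E\to E^{\Delta^1}\to E\times E$ is the diagonal, the composite $\ddef(E)\xra{\sim}\ddef(E^{\Delta^1})\to \ddef(E)^2$ is also the diagonal, so the homotopy fiber of the second map at $(\omega_D,\omega_D)$ identifies with that of the diagonal, namely $\Omega_{\omega_D}\ddef_E$. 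All constructions are natural in $A\in s\C_\L$. I expect the main technical obstacle to lie in reconciling the strict fiber product defining $\mmc(C)$ with the homotopy fiber of $\ddef(E^{\Delta^1})\to \ddef(E)^2$, since the passage from $\mmc$ to $\ddef$ in general introduces a twist by the simplicial group $\underline{G}_E$; one must exploit the triviality of $G_C$ to see that this twist is absorbed on taking loops.
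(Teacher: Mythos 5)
Your argument is correct, but it takes a different route from the paper's in the final step. Both proofs begin the same way: Proposition \ref{loopmor} applied to $f=\id_D$ identifies $\ddef_F$ with $\ddef(\Omega E)$, where $\Omega E := E^{\Delta^1}\times_{E\times E}\bt$ based at $(\omega_D,\omega_D)$ (the paper makes this fibre explicit in the remark following the proof). The paper then interposes the \emph{based} path SDC $PE:=\ev_0^{-1}(\text{basepoint})\subset E^{I}$: Lemma \ref{powercoho} forces $\H^*(PE)=0$, hence $\ddef_{PE}$ is contractible, and the fibration sequence $\ddef_{\Omega E}\to\ddef_{PE}\to\ddef_E$ supplied by Proposition \ref{mcs} exhibits $\ddef_{\Omega E}$ as the loop space. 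You instead use the \emph{free} path object $E^{\Delta^1}$, realise $\mmc(\Omega E)$ as the homotopy fibre of $\ddef(E^{\Delta^1})\to\ddef(E)^2$, and replace that map up to weak equivalence over $\ddef(E)^2$ by the diagonal, whose homotopy fibre at $(\omega_D,\omega_D)$ is $\Omega_{\omega_D}\ddef_E$. The technical point you flag at the end resolves exactly as you suspect: $G_{E^{\Delta^1}}=(E^0)^{(\Delta^1)_0}=(E^0)^2=G_{E\times E}$, so the $\bar{W}G$-factors cancel and the strict fibre of $[\mmc(E^{\Delta^1})/\underline{G}]\to[\mmc(E)^2/\underline{G}]$ over the image of $(\omega_D,\omega_D)$ is literally the strict fibre of $\mmc(E^{\Delta^1})\to\mmc(E)^2$, namely $\mmc(\Omega E)$; the paper's fibration sequence uses the same cancellation implicitly (there $G_{PE}\to G_E$ is an isomorphism and $G_{\Omega E}=1$). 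The two routes are of comparable length: the paper's avoids any explicit discussion of the diagonal and the group cancellation at the cost of introducing $PE$ and checking its acyclicity, while yours keeps the basepoint $(\omega_D,\omega_D)$ and the functoriality in $A$ slightly more visible.
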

\begin{proof}
Define the SDC $PE$  to be the fibre of $\ev_0:E^I \to E$ over the constants $\{e^n\}$. It follows from Lemma \ref{powercoho} that the cohomology groups of $PE$ are all $0$. Now define  the SDC $\Omega E$ to be the fibre of $\ev_1:PE \to E$ over $\{e^n\}$.

By Proposition \ref{loopmor}, $\ddef_F$ is weakly equivalent to  $\ddef (\Omega E)$. By Proposition \ref{mcs}, $\ddef_{PE} \to \ddef_{E}$ is quasi-smooth, and the fibre is $\ddef_{\Omega E}$. Since $\ddef_{PE}$ is contractible, this means that $\ddef_{\Omega E}$ is homotopic to the loop space of $\ddef_E$.
\end{proof}

\begin{remark}
Note that we can describe $\Omega E$ entirely in terms of the structure on $F$, since 
$$
(\Omega E)^n =  (F^n)^n,
$$
with 
\begin{eqnarray*}
\pd^i(f_1, \ldots, f_n)&=& (\pd^if_1,\pd^if_2, \ldots,\pd^if_i, \pd^i f_i, \ldots,\pd^i  f_n)\\
\sigma^i(f_1, \ldots, f_n)&=&(\sigma^if_1,\sigma^if_2, \ldots,\sigma^if_i, \sigma^if_{i+2}, \ldots, \sigma^i f_n)\\
(g_1, \ldots, g_m)*(f_1, \ldots f_n)&=& (g_1*e^n, \ldots, g_m*e^n,e^m*f_1, \ldots e^m*f_n ). 
\end{eqnarray*}

Now, given any smooth object $F \in c\Sp$,  we may regard $F$ as a cosimplicial complex of smooth objects in $\Sp$ (as in  \cite{ddt1} Definition \ref{ddt1-spfdef}), and then 
$$
\underline{F} = \ddef(F) \simeq \ddef(\Omega E).
$$

This means that we cannot expect derived deformation functors coming from SDCs to have any more structure than arbitrary deformation functors.
\end{remark}

\begin{remark}
In the case of Hochschild cohomology, the deformation functor of a morphism $R\xra{f} S$ of associative algebras can be defined over the category of Artinian associative algebras, rather than just $\C_{\L}$. This means that  the Lie bracket $H^i(f) \by \H^j(f) \to \H^{i+j+1}(f)$ defined in \cite{ddt1} \S \ref{ddt1-adams} extends to an associative cup product. If $f=\id_R$ is an identity, then we know that the Lie bracket vanishes (since $\ddef_f$ is a loop space, by Proposition \ref{loopmor}), which is why the cup product becomes commutative. Of course, we also have the bracket   $H^i(\id_R) \by \H^j(\id_R) \to \H^{i+j}(\id_R)$ associated to  the deformation functor of the object $R$. 
\end{remark}

\appendix

\section{Comparison with \cite{paper2}}\label{ddtconsistentsdc}

Now assume that $\L=k$, a field of characteristic $0$. In \cite{ddt1}, an equivalence was given between the homotopy category of $\Z$-graded DGLAs and $\Ho(sc\Sp)$. Under the equivalences of Theorems \ref{allequiv} and \ref{schrep}, this equivalence sends a DGLA to its associated deformation functor in the sense of \cite{Man2} (by \cite{ddt1} Remark \ref{ddt1-manchar}).

However, in \cite{paper2}, a functor $\cE$ was constructed from $\N_0$-graded DGLAs to SDCs, and Definition \ref{ddefdef} then gives us an associated object of $sc\Sp$. The purpose of this appendix is to show that the two constructions are consistent with each other.

\subsection{DGLAs}\label{dgla}

\begin{definition}
A differential graded Lie algebra (DGLA) is a  graded vector space $L=\bigoplus_{i } L^i$ over $k$, equipped with operators $[,]:L \by L \ra L$ bilinear and $d:L \ra L$ linear,  satisfying:

\begin{enumerate}
\item $[L^i,L^j] \subset L^{i+j}.$

\item $[a,b]+(-1)^{\bar{a}\bar{b}}[b,a]=0$.

\item $(-1)^{\bar{c}\bar{a}}[a,[b,c]]+ (-1)^{\bar{a}\bar{b}}[b,[c,a]]+ (-1)^{\bar{b}\bar{c}}[c,[a,b]]=0$.

\item $d(L^i) \subset L^{i+1}$.

\item $d \circ d =0.$

\item $d[a,b] = [da,b] +(-1)^{\bar{a}}[a,db]$
\end{enumerate}

Here $\bar{a}$ denotes the degree of $a$, mod $ 2$, for $a$ homogeneous.

A DGLA is said to be nilpotent if the lower central series $\Gamma_nL$ (given by $\Gamma_1L=L$, $\Gamma_{n+1}L= [L, \Gamma_nL]$) vanishes for $n\gg 0$.
\end{definition}

\begin{definition}
Given a nilpotent Lie algebra $\g$, define $\hat{\cU}(\g)$ to be the pro-unipotent completion of the universal enveloping algebra of $\g$, regarded as a pro-object in the category of algebras. As in \cite{QRat} Appendix A, this is a pro-Hopf algebra, and we define $\exp(\g) \subset  \hat{\cU}(\g)$ to consist of elements $g$ with $\vareps(g)=1$ and $\Delta(g)= g\ten g$, for $\vareps: \hat{\cU}(\g) \to k$ the augmentation (sending $\g$ to $0$), and $\Delta: \hat{\cU}(\g) \to \hat{\cU}(\g)\ten \hat{\cU}(\g)$ the comultiplication.

Since $k$ is assumed to have characteristic $0$, exponentiation gives an isomorphism from $\g$ to $\exp(\g)$, so we may regard $\exp(\g)$ as having the same elements as $\g$, but with multiplication given by the Campbell--Baker--Hausdorff formula. 
\end{definition}

\begin{definition}\label{mcldef}
Given a  nilpotent DGLA $L^{\bt}$, define the Maurer-Cartan set by 
$$
\mc(L):= \{\omega \in  L^{1}\ \,|\, d\omega + \half[\omega,\omega]=0 \in  \bigoplus_n L^{2}\}
$$
Define the gauge group $\Gg(L)$ by $\Gg(L):= \exp(L^0)$, which acts on $\mc(L)$ by the gauge action 
 $$
g(\omega)=   g\cdot \omega \cdot g^{-1} -dg\cdot g^{-1},
$$
where $\cdot$ denotes multiplication in the universal enveloping algebra of $L$. 
That $g(\omega) \in \mc(L)$ is a standard calculation (see \cite{Kon} or \cite{Man}).
\end{definition}

\begin{definition}
A morphism $f:L \to M$ of DGLAs is said to be a quasi-isomorphism if $\H^*(f): \H^*(L) \to \H^*(M)$ is an isomorphism.
\end{definition}

\begin{proposition}\label{mcallequiv}
There is a model structure on the category of $\Z$-graded DGLAs, in which weak equivalences are quasi-isomorphisms, and fibrations are surjections. This category is Quillen-equivalent to the model category $sDG\Sp$  of Definition \ref{sdgspdef}. 
\end{proposition}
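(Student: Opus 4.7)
The plan is to prove the two assertions separately: first establish the model structure on $\Z$-graded DGLAs, then construct an explicit Quillen pair with $sDG\Sp$, and finally invoke the equivalences already proved in the excerpt to promote it to a Quillen equivalence.

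\textbf{Step 1 (model structure).} In characteristic $0$ the projective model structure on $\Z$-graded DGLAs, with fibrations the surjections and weak equivalences the quasi-isomorphisms, is classical: it transfers along the free-DGLA/forgetful adjunction from the projective model structure on $\Z$-graded chain complexes. I would cite Hinich's construction; the only point to check is that path objects exist for nilpotent quotients, and these are provided by $L\ten \Omega^{\bt}(\Delta^1)$ (polynomial de Rham forms), which is well-defined over $k$ of characteristic $0$.

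\textbf{Step 2 (Quillen pair).} I would define a functor $\SSpec\co \text{DGLA}^{\op}\to sDG\Sp$ by
$$
\SSpec(L)(A)_n \;=\; \mc\bigl(L\ten_k \m(A)\ten_k \Omega^{\bt}(\Delta^n)\bigr),
$$
where $\m(A)$ is the maximal ideal of $A\in dg\C_{\L}$ and $\mc$ is the Maurer--Cartan set of a nilpotent DGLA (Definition \ref{mcldef}); nilpotence is automatic because $A$ is Artinian. Left-exactness in $A$ is clear. A left adjoint can be written either through cocommutative coalgebras (bar/cobar) or, as in \cite{ddt1}, through a tangent/cotangent construction from $sDG\Sp$ to DGLAs.

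\textbf{Step 3 (verification of Quillen condition).} For a surjection $A\to B$ in $dg\C_{\L}$ with kernel a small extension $I$, the induced map $L\ten\m(A)\to L\ten\m(B)$ is a surjection of DGLAs with kernel $L\ten I$, and standard obstruction theory for the Maurer--Cartan functor (together with the long exact sequence of cohomology of $0\to L\ten I\to L\ten\m(A)\to L\ten\m(B)\to 0$) implies that
$$
\SSpec(L)(A)\lra \SSpec(L)(B)\by_{\SSpec(L')(B)}\SSpec(L')(A)
$$
is a fibration of simplicial sets, trivial whenever either $A\to B$ or $L\to L'$ is acyclic. This is exactly quasi-smoothness in $sDG\Sp$, so $\SSpec$ is right Quillen. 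The dual check for the left adjoint on cofibrations is formal once one has the model structure of Step 1.

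\textbf{Step 4 (equivalence).} To upgrade to a Quillen equivalence it suffices to check the induced map on homotopy categories is an equivalence. Composing $\SSpec$ with the right Quillen equivalence $\Spf N^*\co sDG\Sp\to sc\Sp$ of Theorem \ref{nequiv} yields a functor $\text{DGLA}^{\op}\to sc\Sp$ which, on homotopy groups and by construction of the MC simplicial set, agrees with the functor DGLA $\to \Ho(sc\Sp)$ appearing in Theorem \ref{allequiv}. Since that composite and $\oR\Spf N^*$ are both equivalences on $\Ho$, so is $\oR\SSpec$; combined with Corollary \ref{weak} (which identifies weak equivalences by cohomology), this gives the Quillen equivalence.

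\textbf{Main obstacle.} The substantive issue is Step 4: verifying that $\oL\Spf N^*\circ\SSpec$ genuinely represents the same homotopy functor as the DGLA$\to sc\Sp$ construction of \cite{ddt1}. This requires a compatibility check between the MC construction on simplicial algebras (via $N\co s\C_{\L}\to dg\C_{\L}$ and the Eilenberg--Zilber shuffle product, Definition \ref{qrat}) and the $\Omega^{\bt}(\Delta^n)$-model above; the relevant comparison is essentially a form of the classical equivalence between simplicial and polynomial-de-Rham MC sets for nilpotent DGLAs, and I expect it to reduce to results already cited from \cite{ddt1} and \cite{monad}, but the bookkeeping between the two Dold--Kan-type normalisations is the delicate point.
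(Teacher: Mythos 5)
The paper itself offers no argument for this proposition: its proof is a bare citation of \cite{ddt1} (Lemma \ref{ddt1-dglamodel} and Corollary \ref{ddt1-mcallequiv}), and the equivalence constructed there is implemented not by polynomial de Rham forms but by the functor $L \mapsto \mc(\Tot (L\ten N\m(A)))$ together with a detour through the bigraded category $dgDG\Sp$ --- this is visible in the proof of Proposition \ref{finalequiv} in the appendix. So your route, namely Hinich's transferred model structure plus a Getzler-style nerve $A \mapsto \mc(L\ten\m(A)\ten\Omega^{\bt}(\Delta^{\bt}))$, is genuinely different from the one the paper relies on; Step 1 is fine as stated, and Step 3 is the standard obstruction-theory computation.

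There are, however, two real gaps. First, you never produce the left adjoint, so you do not actually have a Quillen pair: ``a left adjoint can be written either through cocommutative coalgebras or through a tangent/cotangent construction'' is precisely the hard technical content of the cited result, and without it there is no cofibration condition to check and no derived unit or counit to test. (Note also the variance: your functor should be covariant, from DGLAs to $sDG\Sp$, not out of the opposite category --- a surjection of DGLAs must be sent to a quasi-smooth map, which is exactly what your Step 3 verifies.) Second, Step 4 is circular within the logic of this paper: Theorem \ref{allequiv}, which you invoke to conclude that your composite induces an equivalence of homotopy categories, is itself proved by citing the same Corollary \ref{ddt1-mcallequiv} of \cite{ddt1} that proves the present proposition. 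An honest proof along your lines would have to establish directly that the derived unit and counit are weak equivalences --- for instance by computing cohomology of the nerve via Theorem \ref{robs} and checking the unit on a generating class of cofibrant objects --- and would also need the Dupont-type comparison between the $\Omega^{\bt}(\Delta^{\bt})$ model and the normalisation/$\Tot$ model, which you correctly identify as the delicate point but leave unproved.
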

\begin{proof}
\cite{ddt1} Lemma \ref{ddt1-dglamodel} and Corollary \ref{ddt1-mcallequiv}.
\end{proof}

\subsection{Cosimplicial groups}

\begin{definition}
Given an $\N_0$-graded DGLA $L$, let $DL$ be its denormalisation. This becomes a cosimplicial Lie algebra via the Eilenberg-Zilber shuffle product. Explicitly:

$$
D^nL:= \bigoplus_{\begin{smallmatrix} m+s=n \\ 1 \le j_1 < \ldots < j_s \le n \end{smallmatrix}} \pd^{j_s}\ldots\pd^{j_1}L^m,
$$
where    we define the $\pd^j$ and $\sigma^i$ using the simplicial identities, subject to the conditions that $\sigma^i L =0$ and $\pd^0v= dv -\sum_{i=1}^{n+1}(-1)^i \pd^i v$ for all $v \in L^n$.

We now have to define the Lie bracket $\llbracket -, -\rrbracket$ from $D^nL \ten D^nL$ to $D^n L$. Given a finite set  $I$ of strictly positive integers, write $\pd^I= \pd^{i_s}\ldots\pd^{i_1}$, for $I=\{i_1, \ldots i_s\}$, with $1 \le i_1 < \ldots < i_s$. The Lie bracket is then   defined on the basis by 
$$
\llbracket \pd^Iv, \pd^J w\rrbracket:= \left\{ \begin{matrix} \pd^{I\cap J}(-1)^{(J\backslash I, I \backslash J)}[v,w] & |v|= |J\backslash I|, |v|= |I\backslash J|,\\ 0 & \text{ otherwise},\end{matrix} \right.
$$
where for disjoint sets $S,T$ of integers, $(-1)^{(S,T)}$ is the sign of the shuffle permutation of $S \sqcup T $ which sends the first $|S|$ elements to $S$ (in order), and the remaining $|T|$ elements to $T$ (in order). 
Note that this description only works for $0 \notin I \cup J$. 
\end{definition}

\begin{definition}
Now recall from \cite{paper2} \S \ref{paper2-explicit}, that the functor $\cE:DG\LA \to \mathrm{SDC}$ from $\N_0$-graded DGLAs to SDCs is defined by
$$
\cE(L)^n(A)= \exp(D^n(L)\ten \m_A),
$$
making $\cE(L)$ into a cosimplicial complex of group-valued functors. To make it an SDC, we must define a $*$ product. We do this as the Alexander-Whitney cup product
$$
g*h = (\pd^{m+n}\ldots \pd^{m+2}\pd^{m+1}g)\cdot (\pd^0)^m h,
$$
 for $g \in \cE(L)^m,\,h \in \cE(L)^n$.
\end{definition}

\begin{definition}\label{mcdefgp}
Given a cosimplicial simplicial group $G$, define $\mmc(G) \in \bS$ by $\mmc(G) \subset \prod_{n\ge 0} (G^{n+1})^{\Delta^n}$, satisfying the conditions of \cite{htpy} Lemma \ref{htpy-maurercartan}, i.e.
 the elements $\omega_n \in(G^{n+1})^{\Delta^n}$ satisfy 
\begin{eqnarray*}
\pd_i\omega_n &=& \left\{\begin{matrix} \pd^{i+1}\omega_{n-1}  & i>0 \\ (\pd^1\omega_{n-1})\cdot(\pd^0\omega_{n-1})^{-1} & i=0,\end{matrix} \right.\\
\sigma_i\omega_n &=& \sigma^{i+1}\omega_{n+1},\\
\sigma^0\omega_n&=& 1.
\end{eqnarray*}
Define $\mc :sc\Gp \to \Set$ by $\mc(G)=\mmc(G)_0$.

There is an adjoint action  of $G^0$ on $\mmc(G)$, given by
$$
(g*\omega)_n= (\pd_0 (\pd^1)^{n+1}(\sigma_0)^{n+1}g) \cdot \omega_n \cdot (\pd^0 (\pd^1)^n(\sigma_0)^ng^{-1}),
$$
as in \cite{htpy} Definition \ref{htpy-defdef}.

We then define  $\ddel(G)$ to be the homotopy quotient $\ddel(G)= [\mmc(G)/G^0]\in \bS$.
\end{definition}

Let  $\exp$ denote exponentiation of a nilpotent Lie algebra (giving a unipotent group).

\begin{corollary}\label{sdcgp}
Given an $\N_0$-graded DGLA $L$, the deformation functor $\ddef(\cE(L)) \in sc\Sp$ 
is weakly equivalent to the functor 
$$
A \mapsto \ddel(\exp(DL\ten \m(A))).
$$
\end{corollary}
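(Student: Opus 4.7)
The plan is to reduce the statement to a direct comparison of the defining data on each side. Both $\ddef(\cE(L))$ and $A \mapsto \ddel(\exp(DL\ten\m(A)))$ are constructed as homotopy quotients of the form $[\mmc/\mathrm{group}]$, so it suffices to exhibit, functorially in $A \in s\C_{\L}$, weak equivalences
$$
\underline{\mc}_{\cE(L)}(A) \simeq \mmc(\exp(DL\ten\m(A))), \qquad \underline{G}_{\cE(L)}(A) \simeq \exp(L^0\ten\m(A))
$$
that are compatible with the respective group actions. The identification of groups is immediate from $\cE(L)^0(B) = \exp(L^0\ten\m(B))$ and the fact that $(DL)^0 = L^0$ with trivial coface/codegeneracy structure at level zero, so all the real work lies in the Maurer--Cartan comparison.

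For that, I would exploit the fact that $\cE(L)^n(B) = \exp(D^n L\ten\m(B))$ is group-valued and that the SDC product on $\cE(L)$ is the Alexander--Whitney cup product
$$
g*h = \bigl(\pd^{m+n}\cdots\pd^{m+1}g\bigr)\cdot\bigl(\pd^0\bigr)^m h.
$$
Substituting this into Definition \ref{mcdef}, the relation $\omega_m*\omega_n = \omega_{m+n+1}(s,0,t)$ specialises (taking $m=0$, $s$ empty) to the group-theoretic identity $\pd^0\omega_n = (\pd^1\omega_{n-1})\cdot(\pd^0\omega_{n-1})^{-1}$ from Definition \ref{mcdefgp}, while the $\pd^i$ and $\sigma^i$ conditions in the two definitions match term-by-term. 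The conjugation action $g*\omega*g^{-1}$, expanded via Alexander--Whitney, likewise reproduces the explicit adjoint formula $(g*\omega)_n = (\pd_0(\pd^1)^{n+1}(\sigma_0)^{n+1}g)\cdot\omega_n\cdot(\pd^0(\pd^1)^n(\sigma_0)^n g^{-1})$ used in Definition \ref{mcdefgp}.

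The main obstacle will be the combinatorial bookkeeping between the cube $I^n$ (where $\omega_n$ lives on the left-hand side) and the simplex $\Delta^n$ (where $\omega_n$ lives on the right-hand side). I expect to handle this using the degeneracy constraints $\sigma^0\omega_n(t_1,\dots,t_n) = \omega_{n-1}(t_2,\dots,t_n)$ and $\sigma^{n-1}\omega_n(t_1,\dots,t_n) = \omega_{n-1}(t_1,\dots,t_{n-1})$ of Definition \ref{mcdef}, which by induction force $\omega_n$ to factor through the sub-complex of $I^n$ spanned by weakly increasing tuples, i.e.\ through a canonical copy of $\Delta^n$. Once the Maurer--Cartan sets and group actions are identified at the level of a single $B \in \C_{\L}$, naturality in $A \in s\C_{\L}$ upgrades them to the underlined versions, and the equivalence of the two homotopy quotients then follows from Corollary \ref{fibquot} together with the quasi-smoothness supplied by Proposition \ref{mcs}.
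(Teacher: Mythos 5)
Your reduction to comparing the two Maurer--Cartan constructions and the two groups is a reasonable starting point, and the identification $\underline{G}_{\cE(L)}(A)_n=\exp(L^0\ten\m(A_n))$ is indeed immediate. But the heart of the matter --- passing from the cube-indexed data of Definition \ref{mcdef}, where $\omega_n\in\cE(L)^{n+1}(A^{I^n})$, to the simplex-indexed data of Definition \ref{mcdefgp}, where $\omega_n\in(G^{n+1})^{\Delta^n}$ --- is exactly where your argument breaks down. The degeneracy relations $\sigma^0\omega_n(t_1,\dots,t_n)=\omega_{n-1}(t_2,\dots,t_n)$, etc., determine the \emph{codegeneracies} $\sigma^i\omega_n$ in terms of $\omega_{n-1}$; they impose no constraint on $\omega_n$ itself, and in particular do not force it to factor through a copy of $\Delta^n$ inside $I^n$. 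They could not: the product relation $\omega_m\ast\omega_n=\omega_{m+n+1}(s,0,t)$ and the face relations insert $0$'s and $1$'s at arbitrary positions, so the restrictions of $\omega_{n+1}$ to non-monotone faces of the cube carry essential information (Remark \ref{sdcexpln} explains that the $2^n$ vertices of $I^n$ record the distinct bracketings, with $\omega_n$ a homotopy between them); a factorization through $\Delta^n$ would destroy this. More fundamentally, $\m(A)^{I^n}$ and $\m(A)^{\Delta^n}$ are different simplicial vector spaces, so no term-by-term identification of the two Maurer--Cartan sets exists --- the corollary asserts only a weak equivalence, and producing the comparison map and proving it is a weak equivalence is the actual content.

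That content is precisely what the paper delegates to the companion article: the proof in the text consists of one observation --- that $\cE(L)$ is the SDC associated under the construction of \cite{monad} \S\ref{monad-comonoid} to the cosimplicial simplicial group $A\mapsto\exp(DL\ten\m(A))$ --- followed by a citation of \cite{monad} Proposition \ref{monad-cfdef}, which establishes $\ddef(\cE(G))\simeq\ddel(G)$ for such $G$. Your proposal in effect attempts to reprove that proposition by direct inspection of the defining relations, and the cube-versus-simplex step cannot be dispatched by the bookkeeping you describe. A secondary point: the relation $\pd_0\omega_n=(\pd^1\omega_{n-1})\cdot(\pd^0\omega_{n-1})^{-1}$ of Definition \ref{mcdefgp} uses the cosimplicial operator $\pd^0$ of $DL$, which encodes the differential $d$ of $L$ and has no counterpart among the SDC operations (an SDC has $\pd^i$ only for $1\le i\le n$); reconciling it with the $\ast$-product relation is again part of what the cited results of \cite{monad} accomplish, not a formal consequence of the Alexander--Whitney formula alone.
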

\begin{proof}
The SDC $\cE(L)$ corresponds to $A \mapsto \cE(\exp(DL \ten \m(A))$ in the notation of \cite{monad} \S \ref{monad-comonoid}, so the result is an immediate consequence of \cite{monad} Proposition \ref{monad-cfdef}.  
\end{proof}

\begin{corollary}
For  $L$ as above,  $\ddef(\cE(L)) $ is weakly equivalent in the model category $sc\Sp$ to the functor  $A \mapsto [\mc(\exp(DL\ten \m(A)))/\exp(L^0\ten \m(A_0))]$.
\end{corollary}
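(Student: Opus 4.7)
The starting point is Corollary \ref{sdcgp}, which identifies $\ddef(\cE(L))$ in $\Ho(sc\Sp)$ with the functor $A \mapsto \ddel(\exp(DL \ten \m(A)))$. By Definition \ref{mcdefgp}, this is $[\mmc(\exp(DL \ten \m(A)))/(\exp(DL \ten \m(A)))^0]$. The plan is therefore to identify both the acting simplicial group and the Maurer-Cartan simplicial set with the constructions appearing on the target side.

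The group part is immediate: since the denormalisation $D$ satisfies $D^0L = L^0$, we have $(\exp(DL \ten \m(A)))^0 = \exp(L^0 \ten \m(A))$. Under the standard convention that a functor on $\C_\L$ extends to $s\C_\L$ by $A \mapsto F(A_0)$, this matches the $\exp(L^0 \ten \m(A_0))$ that appears in the target.

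For the Maurer-Cartan part, the task is to show that $\mmc(\exp(DL \ten \m(A)))$ is weakly equivalent in $sc\Sp$ to the functor $A \mapsto \mc(\exp(DL \ten \m(A_0)))$, where $\mc = \mmc(-)_0$ is the classical (set-valued) Maurer-Cartan of Definition \ref{mcdefgp}. The bridge here is the underlining construction of Definition \ref{underline}: for $A \in s\C_\L$, the target evaluates at level $n$ to $\mc$ of the exponential of the nilpotent cosimplicial Lie algebra $DL \ten \m((A^{\Delta^n})_0)$, while the source evaluates at level $n$ to $\mmc$ of the corresponding cosimplicial simplicial group. A Dold-Kan style argument, comparing the higher simplices $\omega_n \in (G^{n+1})^{\Delta^n}$ that appear in $\mmc$ against the classical Maurer-Cartan elements by way of Baker-Campbell-Hausdorff together with the normalisation $N$ inverse to $D$, should produce the required equivalence.

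The main obstacle is precisely this Dold-Kan comparison between $\mmc$ and $\mc$ for exponentiated cosimplicial simplicial Lie algebras: the interaction between the cosimplicial and simplicial structures is combinatorially intricate, and one must verify that the higher simplices in $\mmc$ contribute no extra homotopical information beyond what $\mc$ already sees (after underlining). I expect this argument to be available in \cite{monad}, which already underpins Corollary \ref{sdcgp}, or to reduce to the statement proved there by naturality. Once both the numerator and the acting group have been matched, weak equivalence of the two homotopy quotients in $sc\Sp$ follows formally from Corollary \ref{fibquot} together with the observation that $[-/-]$ preserves weak equivalences between quasi-smooth objects under smooth group actions.
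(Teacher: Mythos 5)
Your approach is essentially the paper's: both start from Corollary \ref{sdcgp} and then replace the derived Maurer--Cartan object and the simplicial gauge group by their classical counterparts before passing to homotopy quotients. The comparison you flag as the main obstacle --- that $\mc(\exp(DL\ten \m(A)))\to \mmc(\exp(DL\ten \m(A)))$ is a weak equivalence in $sc\Sp$ --- is exactly what the paper cites, namely \cite{ddt1} Lemma \ref{ddt1-levelwiseqs} (noting that the source is not fibrant), so your expectation that this is available in the companion papers is correct, though it lives in \cite{ddt1} rather than \cite{monad}.

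The one step that is wrong as stated is the group part. The gauge group in $\ddel(\exp(DL\ten\m(A)))$ is $G^0=\exp(L^0\ten\m(A))$, where $\m(A)$ retains its full simplicial structure; this is a genuinely non-constant simplicial group and is \emph{not} the discrete group $\exp(L^0\ten\m(A_0))$, so the convention $F(A):=F(A_0)$ does not apply and there is nothing to ``match''. What is true is that $\exp(L^0\ten\m(A_0))\to\exp(L^0\ten\m(A))$ induces a weak equivalence on the relevant homotopy quotients, and the paper deduces this from the same levelwise lemma applied to $L^0\ten\m(A_0)\to L^0\ten\m(A)$. So your ``immediate'' identification should be replaced by a second application of the lemma you already invoke for the Maurer--Cartan side; with that repair the argument goes through as in the paper.
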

\begin{proof}
\cite{ddt1} Lemma \ref{ddt1-levelwiseqs} implies  that $ \mc(\exp(DL\ten \m(A)))\to \mmc(\exp(DL\ten \m(A)))$ defines a weak equivalence in $sc\Sp$    (although the former is not fibrant), and similarly for $L^0 \ten \m(A_0) \to L^0 \ten \m(A)$, so we get a weak equivalence on passing to the homotopy quotient.
\end{proof}

\subsection{The final comparison}

\begin{definition} 
Given an $\N_0$-graded DGLA $L$, define $\ddel(L) \in sDG\Sp$ to be the functor $\ddel(L) :dg\C_k \to \bS$ given by the homotopy quotient 
$$
A \mapsto [\mc(\Tot^{\Pi}(L\ten N\m(A)))/ \exp(L^0\ten \m(A_0)]
$$
with respect to the gauge action of Definition \ref{mcldef}.
\end{definition}

\begin{corollary}\label{gplie}
Given an $\N_0$-graded DGLA $L$, the deformation functor $\ddef(\cE(L)) \in sc\Sp$ is weakly equivalent  to $\oR\Spf N^* \ddel(L)$, for $\oR\Spf N^*$ as in Theorem \ref{nequiv}.
\end{corollary}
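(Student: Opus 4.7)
The plan is to unpack both sides concretely and then invoke a Dold--Kan/Eilenberg--Zilber comparison between Maurer--Cartan theory for cosimplicial simplicial Lie algebras and for DGLAs.

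First I would unravel both sides. By the preceding corollary, $\ddef(\cE(L))$ is already weakly equivalent in $sc\Sp$ to $A \mapsto [\mc(\exp(DL\ten \m(A)))/\exp(L^0\ten \m(A_0))]$. On the other side, because $N\m(A)$ has $(N\m(A))_0 = \m(A_0)$, unwinding the definitions gives
$$
\Spf N^*\ddel(L)(A) \;=\; \ddel(L)(NA) \;=\; [\mc(\Tot^{\Pi}(L\ten N\m(A)))/\exp(L^0\ten \m(A_0))].
$$
So the gauge groups in the denominators agree, and the content is to produce a natural, gauge-equivariant weak equivalence between the two Maurer--Cartan functors $A\mapsto \mc(\exp(DL\ten\m(A)))$ and $A\mapsto \mc(\Tot^{\Pi}(L\ten N\m(A)))$.

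Second, I would build this comparison using Dold--Kan denormalisation in both simplicial and cosimplicial directions together with the Eilenberg--Zilber shuffle. The key observation is that $DL\ten \m(A)$ is the cosimplicial simplicial Lie algebra whose bi-normalisation is $L\ten N\m(A)$, and whose associated total DGLA (using the shuffle product) is $\Tot^{\Pi}(L\ten N\m(A))$; under this correspondence $\mmc$ in the sense of Definition \ref{mcdefgp} should match the DGLA Maurer--Cartan set of the totalisation. This is precisely the kind of identification developed in the companion papers \cite{htpy} and \cite{monad}, and the cleanest approach is to cite it.

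Finally, both sides are quasi-smooth: the left by Proposition \ref{mcs}, the right because $\ddel(L)$ is fibrant in $sDG\Sp$, so $\Spf N^*\ddel(L) = \oR\Spf N^*\ddel(L)$. By Corollary \ref{weak}, it then suffices to check the comparison is an isomorphism on the groups $\H^j$. By Proposition \ref{mch}, $\H^j$ of the left-hand side equals $\H^{j+1}(\cE(L))$, which by \cite{paper2} coincides with $\H^{j+1}(L)$; by Theorem \ref{nequiv} and standard DGLA deformation theory, $\H^j$ of the right-hand side is also $\H^{j+1}(L)$, and the comparison map is evidently the identity on this common tangent space.

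The main obstacle is the second step: making the Dold--Kan/Thom--Sullivan comparison between $\mmc(\exp(DL\ten\m(A)))$ and $\mc(\Tot^{\Pi}(L\ten N\m(A)))$ explicit and verifying gauge equivariance. Once this is granted (by appeal to the companion papers), the rest is bookkeeping plus the cohomological criterion for weak equivalence.
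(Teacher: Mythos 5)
Your proposal follows essentially the same route as the paper: reduce via the corollary following Corollary \ref{sdcgp} to the functor $A \mapsto [\mc(\exp(DL\ten \m(A)))/\exp(L^0\ten \m(A_0))]$, identify $\mc(\exp(DL\ten\m(A)))$ with $\mc(\Tot^{\Pi}(L\ten N\m(A)))$ by the denormalisation--exponential comparison from the companion paper (an actual isomorphism, not merely a weak equivalence), and observe that $\Spf N^*\ddel(L)$ already computes $\oR\Spf N^*\ddel(L)$. The one slip is your justification of this last point: $\ddel(L)$ is not fibrant in $sDG\Sp$ in general (the paper explicitly notes the analogous functor is not fibrant, and instead invokes $\oR\Spf N^*F\cong\Spf N^*F$ for \emph{levelwise} quasi-smooth $F$), and your final cohomological check via Corollary \ref{weak} is both unnecessary, since the comparison is an isomorphism, and not directly applicable to non-fibrant objects.
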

\begin{proof}
By Corollary \ref{sdcgp}, it suffices to show that the functors $\ddel(L)$
and 
$
A \mapsto \ddel(\exp(DL\ten \m(A)))$ are weakly equivalent in $sc\Sp$. It follows from \cite{ddt1} Lemma \ref{ddt1-levelwiseqs} that the latter is weakly equivalent to 
$$
A \mapsto [\mc(\exp(DL\ten \m(A)))/ \exp(L^0\ten \m(A_0))], 
$$
which is not fibrant in general. Now,  \cite{monad} Theorem \ref{monad-cfexp} implies that this is isomorphic to 
$$
A \mapsto [\mc(\Tot^{\Pi} (L\ten N\m(A)))/ \exp(L^0\ten \m(A_0)],
$$ 
which is just $\Spf N^* \ddel(L)$. \cite{ddt1} Lemma \ref{ddt1-underlinecoho} then implies that $\oR\Spf N^* F \cong\Spf N^* F$ for all levelwise quasi-smooth functors $F$.
\end{proof}

\begin{definition}
Define $DGdg\C_{k}$ to be the category of Artinian local  $\N_0\by\N_0$-graded  graded-commutative  $\L$-algebras $A^{\bt}_{\bt}$ with differential of bidegree $(1,-1)$ and  residue field $k$. Let   $dgDG\Sp$ be the category of left-exact $\Set$-valued functors on  $DGdg\C_{k}$.
\end{definition}

\begin{proposition}\label{finalequiv}
Under the equivalence of Proposition \ref{mcallequiv}, an $\N_0$-graded DGLA $L$ corresponds to the deformation functor $\ddef(\cE(L)) \in sc\Sp$ of Definition \ref{ddefdef}.
\end{proposition}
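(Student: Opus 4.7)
The plan is to chain together Corollary \ref{gplie} with an identification of the standard DGLA-to-$sDG\Sp$ functor. Corollary \ref{gplie} already gives a weak equivalence
$$
\ddef(\cE(L)) \;\simeq\; \oR\Spf N^*\,\ddel(L)
$$
in $sc\Sp$, so it suffices to show that $L \mapsto \ddel(L)$ realises (up to weak equivalence) the image of $L \in DG\mathrm{LA}$ in $\Ho(sDG\Sp)$ under the Quillen equivalence of Proposition \ref{mcallequiv}, and then to invoke Theorem \ref{nequiv} which says that $\oR\Spf N^*$ implements the passage $\Ho(sDG\Sp) \xra{\simeq} \Ho(sc\Sp)$.

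First I would recall the construction in \cite{ddt1} underlying Proposition \ref{mcallequiv}: a $\Z$-graded DGLA $L$ is sent to the functor
$$
A \longmapsto [\,\mc(\Tot^{\Pi}(L \ten \m(A)))\, /\, \exp(L^0 \ten \m(A))\,]
$$
on $dg\C_k$ (this is the content of \cite{ddt1} Remark \ref{ddt1-manchar}, identifying the image with Manetti's extended deformation functor). When $L$ is $\N_0$-graded, the normalisation $N\m(A)$ of $A \in s\C_\L$ lives in $dg\C_k$, and the definition of $\ddel(L)$ uses precisely the same Maurer--Cartan/gauge homotopy quotient on $\Tot^\Pi(L \ten N\m(A))$. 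Thus $\ddel(L)$ is (up to the cosmetic difference of evaluating on $N\m(A_0)$ versus $\m(A_0)$ in the denominator, which is a weak equivalence by \cite{ddt1} Lemma \ref{ddt1-levelwiseqs}) exactly the image of $L$ under the equivalence of Proposition \ref{mcallequiv}.

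Next, Theorem \ref{nequiv} says the right Quillen equivalence $\oR\Spf N^* : \Ho(sDG\Sp) \xra{\simeq} \Ho(sc\Sp)$ translates objects of $sDG\Sp$ into objects of $sc\Sp$ by precomposing with the normalisation $N : s\C_\L \to dg\C_\L$. Composing the two equivalences sends the $\N_0$-graded DGLA $L$ to $\oR\Spf N^*\ddel(L)$, which is precisely what Corollary \ref{gplie} identifies with $\ddef(\cE(L))$. Chaining these identifications completes the proof.

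The genuinely delicate step is verifying that the standard DGLA-to-$sDG\Sp$ functor arising from Proposition \ref{mcallequiv} agrees, on the $\N_0$-graded subcategory, with the functor $\ddel$ of the present paper; this is essentially a matter of unwinding definitions, but one has to be careful that the bookkeeping of the gauge action (using $\exp(L^0 \ten \m(A_0))$ in our quotient) matches the construction in \cite{ddt1} up to a weak equivalence of quasi-smooth functors, which is exactly where \cite{ddt1} Lemma \ref{ddt1-levelwiseqs} is applied, as in the proof of Corollary \ref{gplie}. Once this identification is in hand, the rest of the argument is formal.
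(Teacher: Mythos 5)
Your reduction via Corollary \ref{gplie} to the claim that $\ddel(L)$ represents $L$ under the equivalence of Proposition \ref{mcallequiv} coincides with the paper's first step, but the remaining step is not the definitional unwinding you describe --- it is the entire content of the proof. The equivalence of Proposition \ref{mcallequiv} is \emph{not} realised in \cite{ddt1} by the gauge homotopy quotient formula $A \mapsto [\mc(\Tot^{\Pi}(L\ten\m(A)))/\exp(L^0\ten\m(A))]$ that you ``recall''; it is mediated by the bigraded category $dgDG\Sp$, via $\oR\Spf D^*:\Ho(sDG\Sp)\to\Ho(dgDG\Sp)$ together with the pure Maurer--Cartan functor $\Spf\Tot^*\mc(L)(A)=\mc(\Tot L\ten\m(A))$ on $DGdg\C_k$, with no group quotient in sight (the homotopical content is carried by the second grading of $A$). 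Likewise \cite{ddt1} Remark \ref{ddt1-manchar} identifies the image of $L$ with Manetti's extended deformation functor, which is again an MC-type functor rather than a quotient by $\exp(L^0)$; that quotienting $\mc$ by the gauge group computes the correct derived functor is a theorem to be proved here, not a definition to be cited.

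What the paper actually does to close this gap is an explicit computation: for $A\in DGdg\C_k$ it identifies $\Spf D^*\ddel(L)(A)$ with the set of pairs $(\omega,\gamma)$, $\omega\in\mc(\Tot L\ten A^0)$ and $\gamma\in\mc(L^0\ten A_0^{\bt})$, satisfying $[\gamma,\omega]+d_{c,A}\omega=0$ (using \cite{monad} Theorem \ref{monad-cfexp} to linearise the group-valued data), observes that $\alpha=\gamma+\omega$ then satisfies the Maurer--Cartan equation in $\Tot(L\ten A)$, and shows that the resulting map $\psi:\Spf D^*\ddel(L)\to\Spf(\Tot^{\Pi})^*\mc(L)$ is a weak equivalence via \cite{ddt1} Lemma \ref{ddt1-dgweakext}. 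A further step, also absent from your argument, is needed to replace the underived $\Spf D^*\ddel(L)$ by $\oR\Spf D^*\ddel(L)$; the paper does this by comparing cohomology groups and invoking Corollary \ref{weak}. Without these verifications your proof establishes nothing beyond the reduction already supplied by Corollary \ref{gplie}.
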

\begin{proof}
The equivalence of \cite{ddt1} Corollary \ref{ddt1-mcallequiv} is given by a functor $\oR\Spf D^* : \Ho(sDG\Sp) \to \Ho(dgDG\Sp)$, together with a functor
$$
\Spf \Tot^* \mc: DG_{\Z}\LA \to dgDG\Sp, 
$$
on $\Z$-graded DGLAs, given by 
$$
\Spf \Tot^* \mc(L)(A)= \mc( \Tot L\ten \m(A)). 
$$

By Corollary \ref{gplie}, it suffices to show that the objects $\oR \Spf D^* \ddel(L)$ and $\Spf \Tot^* \mc(L)$ are weakly equivalent in $dgDG\Sp$. 

Taking $A\in DGdg\C_{k}$, it follows from the definitions  that $\Spf D^* \ddel(L)(A)$ consists of maps $\Spf (DA) \to \mc(L)\by^{\exp(L^0)}W(\exp(L^0))$ in $sDG\Sp$, where $DA \in (dg\C_k)^{\Delta}$ is defined by cosimplicial denormalisation, and $\Spf(DA) \in sDG\Sp$ is the functor $dg\C_k \to \bS$ given in level $n$ by $\Hom_{dg\C_k}(D^nA, -)$.

Thus
$$
\Spf D^* \ddel(L)(A) \subset \mc(\Tot L\ten A^0)\by\mc(D(\exp(L^0\ten A_0^{\bt}))  
$$ 
consists of pairs $(\omega,g)$ with 
$
g* \pd^0_{DA} \omega = \pd^1_{DA}\omega,
$
corresponding in level $n$ to the map 
\begin{eqnarray*}
(\Spf D^n A) &\to& \mc(L)\by \exp(L^0)^n\\
 (\omega, g) & \mapsto& (\omega, (\pd^2)^{n-1}g, \pd^0(\pd^2)^{n-2}g, \ldots, (\pd^0)^{n-1}g).
\end{eqnarray*}

By \cite{monad} Theorem \ref{monad-cfexp}, we know that $\mc(D(\exp(L^0\ten A_0^{\bt})) \cong \mc( L^0 \ten A_0^{\bt}))$, giving us $\gamma \in \mc( L^0 \ten A_0^{\bt}))$. Note that $g=\exp(\gamma)$, so 
$$
g* \pd^0_{DA} \omega = \pd^0_{DA} \omega +[\gamma, \omega],
$$
with all higher terms   vanishing, since $\sigma^0 \gamma=0$, so $\llbracket \gamma, \llbracket \gamma, v\rrbracket \rrbracket=0$ for all $v$ (and in particular when $v= \pd^0_{DA} \omega$).

Thus  we have
$$
\Spf D^* \ddel(L)(A) = \{(\omega,\gamma) \,:\, \omega \in \mc(\Tot L\ten A^0),\, \gamma \in \mc(L^0\ten A_0^{\bt}),\, [\gamma, \omega] + d_{c,A} \omega=0 \},
$$ 
where $d_{c,A}$ is the cochain differential on $A$

Now look at $\gamma + \omega \in (\Tot(L\ten A))^1$. The equations combine to show that $\alpha:=\gamma + \omega$ lies in $\mc(\Tot(L\ten A))$, since
$$
d\alpha+[\alpha, \alpha] = (d_L\omega + d_A^s\omega +[\omega, \omega]) + (d_A\gamma + [\gamma, \gamma]) + ([\gamma, \omega] + d_{A,c} \omega)=0,
$$
where $d^s_A$ is the chain differential on $A$. 

Thus we have defined a map 
$$
\psi:\Spf D^* \ddel(L)\to \Spf (\Tot^{\Pi})^*\mc(L).
$$
In fact, we have shown that 
$$
\Spf D^* \ddel(L)(A) \cong \mc\Tot ((L\ten A^0)\by_{(L^0\ten A^0)} (L^0\ten A_0)),
$$
and \cite{ddt1} Lemma \ref{ddt1-dgweakext} then implies that $\psi$ is a weak equivalence (with similar reasoning to \cite{ddt1} Lemma \ref{ddt1-levelwiseqs}).

Finally, note that this gives cohomology groups (as defined in Definition \ref{generalcoho})  $\H^n(\Spf D^* \ddel(L)(A) )\cong \H^{n+1}(L)$, and that
$$
\H^n(\oR\Spf D^* \ddel(L))\cong \H^n(\ddel(L)) \cong \H^{n+1}(L)
$$
since  the equivalence of \cite{ddt1} Proposition \ref{ddt1-mcallequiv} preserves cohomology groups.
Therefore the morphism 
 $\Spf D^* \ddel(L)\to  \oR \Spf D^* \ddel(L)$ is also a weak equivalence by Corollary \ref{weak}, and this completes the proof.
\end{proof}

\bibliographystyle{alphanum}
\bibliography{references}
\end{document}